\newtheorem{thm}{Theorem}[section]
\newtheorem{prop}[thm]{Proposition}
\newtheorem{lem}[thm]{Lemma}
\newtheorem{cor}[thm]{Corollary}
\numberwithin{equation}{section}
\theoremstyle{remark}
\newtheorem{rem}{Remark}
\newcommand{\supp}{\text{supp }}
\newcommand{\R}{\mathbb{R}}
\newcommand{\C}{\mathbb{C}}
\newcommand{\Z}{\mathbb{Z}}
\newcommand{\ls}{\lesssim}
\begin{document}
\title[Strong Convergence for Discrete NLS in the Continuum Limit]
{Strong Convergence for Discrete Nonlinear Schr\"odinger equations in the Continuum Limit}

\author{Younghun Hong}
\address{Department of Mathematics, Chung-Ang University, Seoul 06974, Republic of Korea}
\email{yhhong@cau.ac.kr}

\author{Changhun Yang}
\address{Department of Mathematical Sciences, Seoul National University, Seoul 151-747, Republic of Korea}
\email{maticionych@snu.ac.kr}

\subjclass[2010]{}
\keywords{} 

\begin{abstract}
We consider discrete nonlinear Schr\"odinger equations (DNLS) on the lattice $h\mathbb{Z}^d$ whose linear part is determined by the discrete Laplacian which accounts only for nearest neighbor interactions, or by its fractional power. We show that in the continuum limit $h\to 0$, solutions to DNLS converge strongly in $L^2$ to those to the corresponding continuum equations, but a precise rate of convergence is also calculated. In particular cases, this result improves weak convergence in Kirkpatrick, Lenzmann and Staffilani \cite{KLS}. Our proof is based on a suitable adjustment of dispersive PDE techniques to a discrete setting. Notably, we employ uniform-in-$h$ Strichartz estimates for discrete linear Schr\"odinger equations in \cite{HY}, which quantitatively  measure dispersive phenomena on the lattice. Our approach could be adapted to a more general setting like \cite{KLS} as long as the desired Strichartz estimates are obtained.
\end{abstract}

\maketitle

\section{Introduction}

In applications, a discrete equation is often introduced as a simplified model for a given physical equation. Indeed, spatial discretization would be a first step to implement finite difference methods (FDM), transferring an equation on a continuum domain to that on a lattice domain. If the domain is unbounded, taking Dirichlet cut-off for finitization, the equation becomes suitable for numerical simulation by the method of lines (MOL), or it could be simplified further by time discretization. Therefore, both in theory and practice, important is a rigorous proof of convergence from solutions to a discrete equation to those to a continuum equation as the distance between lattice points (or the size of grid) gets smaller and smaller. This convergence is referred to as a \textit{continuum limit}.

In this paper, we consider a class of nonlinear dispersive equations, in particular, a nonlinear Schr\"odinger equation (NLS) with power-type nonlinearity, 
\begin{equation}\label{NLS}
i\partial_t u=(-\Delta)^\alpha u+\lambda |u|^{p-1}u
\end{equation}
on the Euclidean domain $\mathbb{R}^d$, where $0<\alpha\leq 1$, $\alpha\neq \frac{1}{2}$, $p>1$, $\lambda\in\R/\{0\}$ and
$$u=u(t,x):\mathbb{R}\times \mathbb{R}^d\to\mathbb{C}.$$
Here, the fractional Laplacian $(-\Delta)^\alpha$ is the Fourier multiplier operator with symbol $|\xi|^{2\alpha}$. The standard NLS ($\alpha=1$) is derived as a mean-field equation for Bose-Einstein condensates, and it also appears in nonlinear optics to describe wave propagation in a weakly nonlinear medium \cite{SS}. The fractional NLS $(\frac{1}{2}<\alpha<1)$ was introduced by Laskin to describe fractional quantum mechanics \cite{L}. A model for dispersive wave turbulence also has a fractional dispersion relation \cite{MMT}. The case $d=1$ and $\alpha=\frac14$ is considered as a simplifed model equation for the two-dimensional water wave equation \cite{IP}.

As a discretization of the equation \eqref{NLS}, we consider a discrete nonlinear Schr\"odinger equation (DNLS)
\begin{equation}\label{DNLS}
i\partial_t u_h=(-\Delta_h)^\alpha u_h+\lambda |u_h|^{p-1}u_h
\end{equation}
on the lattice $h\mathbb{Z}^d$, where
$$u_h=u_h(t,x):\mathbb{R}\times h\mathbb{Z}^d\to\mathbb{C}.$$
Indeed, there are several natural ways to define a Laplacian operator on a lattice, but we here restrict ourselves to the simplest but perhaps the most important one given by 
\begin{equation}\label{discrete Laplacian}
(\Delta_h f)(x)=\sum_{j=1}^d\frac{f(x+he_j)+f(x-he_j)-2f(x)}{h^2},\quad\forall x\in h\mathbb{Z}^d,
\end{equation}
which accounts only for nearest neighbor interactions. A nonlocal fractional Laplacian is then properly defined by means of functional calculus. The discrete model \eqref{DNLS} formally converges to the continuum model \eqref{NLS} as $h\to 0$. It should be noted that not only for numerical experiments, DNLS is also physically important by itself for optical lattices and for charge transport in biopolymers like the DNA \cite{GCR1, GCR2, MSGJR}. There is a huge physics literature on this topic, and we refer to \cite{EJ, KRB, Kev} for overview. Thus, conversely, NLS \eqref{NLS} can be introduced to describe the limiting dynamics of a physical discrete model \cite{KLS}.

The goal of this paper is to develop a general strategy to give a rigorous proof of the continuum limits of discrete nonlinear dispersive equations. Indeed, continuum limits for ground state solitons \cite{FP, JW1, JW2} and those for solutions near soliton manifolds \cite{BFG} are now relatively well-understood in various contexts. Nevertheless, as for continuum limits of general solutions, to the best of the authors' knowledge, the only known result is due to Kirkpatrick, Lenzmann and Staffilani \cite{KLS}. In this important work, it is proved that solutions to a one-dimensional cubic DNLS, including a very large class of long-range interactions, weakly converge to solutions to the corresponding fractional NLS as $h\to 0$.

Our main result asserts that restricting to the particular choice of the Laplacian \eqref{discrete Laplacian} and its fractional power, weak convergence in the previous work \cite{KLS} can be improved to strong convergence. Furthermore, a precise rate of convergence is calculated. Our approach is based on a suitable adjustment of dispersive PDE techniques to problems on lattices in consideration of their limits, which involves ``uniform-in-$h$" Strichartz estimates for discrete linear Schr\"odinger equations (see Theorem \ref{Strichartz} below). 

For the statement, the following definitions are needed to relate functions on a lattice to those on the whole space. Given $f\in L^2(\mathbb{R}^d; \mathbb{C})$, we define its \textit{discretization} $f_h: h\mathbb{Z}^d\to\mathbb{C}$ by 
\begin{equation}\label{discretization}
f_h(x_m):=\frac{1}{h^d}\int_{x_m+[0,h)^d} f(x)\ dx,\quad \forall x_m=hm\in h\mathbb{Z}^d.
\end{equation}
Conversely, we define the \textit{linear interpolation operator} $p_h$ sending a function $f:h\mathbb{Z}^d\to\mathbb{C}$ on the lattice to a function on $\mathbb{R}^d$, 
\begin{equation}\label{p_h}
(p_hf)(x):=f(x_m)+\sum_{j=1}^d\frac{f(x_m+he_j)-f(x_m)}{h} (x-x_m)_j,\quad\forall x\in x_m+[0,h)^d,
\end{equation}
where $x_j$ denotes the $j$-th component of $x\in\mathbb{R}^d$. The main theorem of this paper then reads as follows.

\begin{thm}[Continuum limits]\label{main theorem}
In the NLS case $(\alpha=1)$, we assume that $d=1,2,3$ and
	\begin{equation}\label{assumption 1}
	\left\{
	\begin{aligned}
	\max\left\{\frac{d-2}{d+2}, 0\right\}&<\frac{1}{p}<1&& \textup{when }\lambda>0&& \textup{(defocusing)},\\
	\frac{d}{d+4}&<\frac{1}{p}<1&&\textup{when }\lambda<0&& \textup{(focusing)}.
	\end{aligned}
	\right.
	\end{equation}
In the fractional NLS case $(0<\alpha<1)$, we assume that $d=1$, $\frac{1}{3}<\alpha<1$ and $\alpha\neq\frac{1}{2}$, and
	\begin{equation}\label{assumption 2}
	\left\{
	\begin{aligned}
	\max\left\{\frac{1-2\alpha}{1+2\alpha},0\right\} &<\frac{1}{p}<1&& \textup{when }\lambda>0&& \textup{(defocusing)},\\
	\frac{1}{1+4\alpha}&<\frac{1}{p}<1&&\textup{when }\lambda<0&& \textup{(focusing)}.
	\end{aligned}
	\right.
	\end{equation}
Let $h\in(0,1]$. Given initial data $u_0\in H^\alpha(\mathbb{R}^d)$, let $u(t)\in C(\mathbb{R}; H^\alpha(\mathbb{R}^d))$ be the global solution to NLS \eqref{NLS} (see Proposition \ref{WP:NLS}), and let $u_h(t)$ be the global solution to DNLS \eqref{DNLS} whose initial data $u_{h,0}$ is the discretization of $u_0$ (see Proposition \ref{GWP}). Then, there exist constants $A,B>0$, independent of $h$, such that for all $t\in\mathbb{R}$,
$$\|p_hu_{h}(t)-u(t)\|_{L^2(\mathbb{R}^d)}\leq A h^{\frac{\alpha}{1+\alpha}}e^{B|t|}\left(1+\|u_{0}\|_{H^\alpha (\mathbb{R}^d)}\right)^p.$$
\end{thm}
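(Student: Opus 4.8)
The plan is to compare the discrete and continuum flows through a common functional-analytic framework, using the linear interpolation operator $p_h$ together with its (near-)inverse, the discretization map, to transport everything onto $\mathbb{R}^d$. First I would record the mapping properties of $p_h$ and the discretization: both are bounded on $L^2$ uniformly in $h$, and $p_h$ followed by discretization is close to the identity, with the error measured in negative Sobolev norms by powers of $h$. The key linear fact is that the interpolated discrete Schr\"odinger propagator $p_h e^{-it(-\Delta_h)^\alpha}$ is close to the continuum propagator $e^{-it(-\Delta)^\alpha}$ on frequency-localized data; the discrepancy comes from the difference between the lattice symbol (a periodized, truncated version, essentially $\frac{4}{h^2}\sum_j \sin^2(h\xi_j/2)$ raised to the $\alpha$ power) and $|\xi|^{2\alpha}$, which is $O(h^2|\xi|^{2+2\alpha})$ for small $h\xi$, plus the high-frequency tail $|\xi|\gtrsim 1/h$ that must be truncated. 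Optimizing the frequency cutoff $N$ against these two error sources is exactly what produces the exponent $\frac{\alpha}{1+\alpha}$: balancing $h^2 N^{2}$-type errors (after dividing by $N^{2\alpha}$ appropriately, coming from $H^\alpha$ regularity) against $N^{-\alpha}$ gives $N \sim h^{-2/(1+\alpha)}$ and hence $h^{\alpha/(1+\alpha)}$.

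Next I would set up the nonlinear comparison via Duhamel's formula. Writing $v_h := p_h u_h$ and subtracting the two integral equations, $u(t) - v_h(t)$ splits into (i) a linear-flow error on the initial data, (ii) the commutator-type error between $p_h$ and the discrete Duhamel integral versus the continuum Duhamel integral of the interpolated nonlinearity, and (iii) the difference of nonlinearities $|u|^{p-1}u - |v_h|^{p-1}v_h$ propagated by the continuum flow. Term (iii) is handled by the usual Lipschitz bound $\||u|^{p-1}u - |v_h|^{p-1}v_h\| \lesssim (\|u\|^{p-1} + \|v_h\|^{p-1})\|u - v_h\|$ in the relevant Strichartz space, which by the uniform-in-$h$ Strichartz estimates of Theorem \ref{Strichartz} and the $a$ priori global bounds on $u$ and $u_h$ (from Propositions \ref{WP:NLS} and \ref{GWP}) closes a Gronwall-type inequality on each short time interval, giving the $e^{B|t|}$ growth after iteration. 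The assumptions \eqref{assumption 1}--\eqref{assumption 2} on $p$ are precisely what make the nonlinearity Strichartz-admissible and the $H^\alpha$ solutions global, so those ranges enter only through the well-posedness inputs and the Strichartz exponents.

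The remaining pieces are the ``mismatch'' errors (i) and (ii): the error from replacing $u_0$ by $p_h$ of its discretization, and — more delicately — the error from the fact that the interpolation operator does not commute with the nonlinearity, i.e. $p_h(|u_h|^{p-1}u_h) \neq |p_h u_h|^{p-1} p_h u_h$. I would estimate the latter by an $L^2$ bound on $p_h F - F(p_h \cdot)$-type quantities in terms of $h$ times a derivative of $F$, again using that $u_h$ has uniformly bounded discrete $H^\alpha$ norm; this contributes an error of order $h^\alpha$ or better, which is dominated by $h^{\alpha/(1+\alpha)}$. Likewise the initial-data error $\|p_h u_{h,0} - u_0\|_{L^2}$ is $O(h^\alpha \|u_0\|_{H^\alpha})$ by a standard approximation estimate, again dominated by the main rate. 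Assembling (i)+(ii)+(iii) into a single integral inequality $\|u(t) - v_h(t)\|_{L^2} \lesssim h^{\alpha/(1+\alpha)}(1 + \|u_0\|_{H^\alpha})^p + \int_0^t (\cdots)\|u(s) - v_h(s)\| \, ds$ on unit-length time steps and running Gronwall yields the stated bound.

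The main obstacle I anticipate is the linear estimate on $p_h e^{-it(-\Delta_h)^\alpha} - e^{-it(-\Delta)^\alpha}$ with the sharp rate: one must carefully handle the periodization inherent in the lattice Fourier transform, the truncation to the fundamental domain $[-\pi/h, \pi/h]^d$, and the fractional power $\alpha$ of the symbol (which is only Hölder, not smooth, at the corners of the Brillouin zone when $\alpha < 1$), all while keeping the $h$-dependence explicit and uniform in $t$ before the Gronwall step absorbs the time growth. Getting the frequency-cutoff optimization to land exactly on $h^{\alpha/(1+\alpha)}$, rather than a lossy power, is where the real work lies; everything else is a controlled perturbation argument built on the Strichartz estimates already available from \cite{HY}.
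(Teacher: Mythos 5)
Your proposal follows essentially the same route as the paper: the same Duhamel decomposition (linear initial-data error, propagator commutator through $p_h$, the non-commutation of $p_h$ with the nonlinearity, and the Lipschitz difference closed by Gronwall), with the rate $h^{\frac{\alpha}{1+\alpha}}$ produced by exactly the paper's frequency-cutoff optimization in the linear comparison, and with the time-integrability of the Gronwall coefficient supplied by the time-averaged uniform $L_h^\infty$ bounds that the Strichartz estimates yield for the nonlinear discrete flow. The only quibble is your heuristic balance: $N\sim h^{-2/(1+\alpha)}$ would give $N^{-\alpha}=h^{2\alpha/(1+\alpha)}$, whereas the correct cutoff is $N\sim h^{-1/(1+\alpha)}$, balancing $|t|\,h^2N^{2+\alpha}$ against $N^{-\alpha}$, which lands on the stated rate.
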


\begin{rem}
\begin{enumerate}[(i)]
\item As for NLS $(\alpha=1)$, the assumptions \eqref{assumption 1} are almost optimal in one to three dimensions in the sense that the full range for global well-posedness of the continuum equation \eqref{NLS} is covered except the defocusing energy-critical nonlinearity, i.e., $\lambda>0$, $d=3$ and $p=5$. Higher dimensions $d\geq 4$ are excluded due to a technical reason (see Remark \ref{high d restriction}). 
\item As for the fractional NLS $(0<\alpha<1)$, the assumptions \eqref{assumption 2} are also almost optimal in one dimension in that such conditions are currently required for global well-posedness of the continuum fractional NLS. Multi-dimensions $d\geq 2$ are not included here because of lack of uniform Strichartz estimates at this moment (see Remark \ref{Strichartz estimates remarks} (iii) below).
\end{enumerate}
\end{rem}

As mentioned above, the key new analysis tool of this paper is the following Strichartz estimates for discrete linear Schr\"odinger equations, which hold uniformly in $h>0$. We denote by $e^{-it(-\Delta_h)^\alpha}f$ the solution to the discrete linear Schr\"odinger equation $i\partial_tu_h=(-\Delta_h)^\alpha u_h$ with initial data $f$. We say that $(q,r)$ is \textit{admissible} if $2\leq q,r\leq\infty$,
\begin{equation}\label{admissible}
\frac{2}{q}+\frac{d}{r}=\frac{d}{2}\textup{ and }(q,r,d)\neq (2,\infty,2),
\end{equation}
and that $(q,r)$ is \textit{resonance admissible} if $2\leq q,r\leq\infty$,
\begin{equation}\label{r-admissible}
\frac{3}{q}+\frac{d}{r}=\frac{d}{2}\textup{ and }(q,r,d)\neq (2,\infty,3).
\end{equation}
We define the Lebesgue space $L_h^p$ on the lattice as the Banach space equipped with the norm
\begin{equation}\label{L_h^p}
\|f\|_{L_h^p}
:=\left\{\begin{aligned}
&\bigg\{h^d\sum_{x_m\in h\mathbb{Z}^d}|f(x_m)|^p\bigg\}^{1/p}&&\textup{if }1\leq p<\infty,\\
&\sup_{x_m\in h\mathbb{Z}^d}|f(x_m)|&&\textup{if }p=\infty,
\end{aligned}\right.
\end{equation}
and define the fractional derivative $|\nabla_h|^s$ as the Fourier multiplier of symbol $|\xi|^s$ via the discrete Fourier transform (see Section 2).

\begin{thm}[Uniform Strichartz estimates on a lattice]\label{Strichartz} Suppose that $1\leq d\leq 3$, $h>0$ and $0<\alpha\leq 1$ with $\alpha\neq\frac{1}{2}$. Then, there exists $C>0$, independent of $h>0$, such that the following hold.
\begin{enumerate}[(i)]
\item If $d=1,2,3$ and $\alpha=1$ or if $d=1$ and $\frac{1}{2}<\alpha<1$, then for any resonance admissible pair $(q,r)$, we have
\begin{equation}\label{Strichartz estimate 1}
\| e^{-it(-\Delta_h)^\alpha}f\|_{L_t^q(\mathbb{R};L_h^r)}\leq C \||\nabla_h|^{\frac{3-2\alpha}{q}}f\|_{L_h^2}.
\end{equation}
\item If $d=1$ and $0<\alpha<\frac{1}{2}$, then for any admissible pair $(q,r)$, we have
\begin{equation}\label{Strichartz estimate 2}
\| e^{-it(-\Delta_h)^\alpha}f\|_{L_t^q(\mathbb{R};L_h^r)}\leq C \| |\nabla_h|^{\frac{2(1-\alpha)}{q}}f\|_{L_h^2}.
\end{equation}
\end{enumerate}
\end{thm}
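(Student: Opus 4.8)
The plan is to derive the uniform Strichartz estimates from a uniform dispersive estimate for the discrete propagator combined with the abstract $TT^*$ argument of Keel--Tao. First I would pass to the discrete Fourier side: on the lattice $h\Zd$, the Fourier transform lives on the torus $\mathbb T_h^d = [-\pi/h, \pi/h]^d$, and $e^{-it(-\Delta_h)^\alpha}f$ has symbol $e^{-it\omega_h(\xi)}$ with $\omega_h(\xi) = \big(\frac{4}{h^2}\sum_{j=1}^d \sin^2(h\xi_j/2)\big)^\alpha$. The core of the matter is therefore the oscillatory integral kernel $K_{h,t}(x) = \int_{\tph} e^{i(x\cdot\xi - t\omega_h(\xi))}\,d\xi$. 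After rescaling $\xi \mapsto \xi/h$ one reduces to a fixed torus $\mathbb T^d$ with phase involving $h^{-2\alpha}t$, so the relevant large parameter is $s := h^{-2\alpha}|t|$; the goal is a bound $\|K_{h,t}\|_{L_h^\infty} \lesssim h^{-d}\min\{1, s^{-\sigma}\}$ for the appropriate $\sigma$ (namely $\sigma = d/2$ in the generic case $d/q$-scaling, but degenerating to $d/3$ when the Hessian of $\omega_h$ has a vanishing determinant along a hypersurface — this is precisely the origin of the ``resonance admissible'' condition and the loss of $\frac{3-2\alpha}{q}$ derivatives).

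The key step is the stationary phase / van der Corput analysis of $\omega_h$. For $\alpha = 1$ the symbol is (a multiple of) $\sum_j \sin^2(h\xi_j/2)$, whose Hessian degenerates where some $\cos(h\xi_j) = 0$; there the decay drops from $t^{-d/2}$ to $t^{-d/3}$ because a third-order van der Corput estimate is needed in the degenerate directions. This is exactly the Schultz-type phenomenon for the discrete Schr\"odinger equation, and it forces the modified scaling $\frac{3}{q}+\frac{d}{r}=\frac{d}{2}$ and the derivative loss $|\nabla_h|^{(3-2\alpha)/q}$: interpolating the $L_h^1 \to L_h^\infty$ decay rate $t^{-d/3}$ against the trivial $L_h^2$ bound, and then feeding the result into Keel--Tao with the ``exponent'' $\sigma = d/3$ in place of $d/2$, produces an admissibility relation with a $3$ and an endpoint exclusion at $(2,\infty,3)$. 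For $0<\alpha<1$ one has $\omega_h = (\text{positive quantity})^\alpha$; near $\xi = 0$ the symbol behaves like $|\xi|^{2\alpha}$ (fractional, genuinely dispersive with its own stationary phase giving decay $t^{-d/(2\alpha)}$ locally) while away from $0$ the geometry of the sine-function persists. For $1/2<\alpha<1$ the worst behavior is still the cosine-vanishing degeneracy inherited from the discrete structure, giving the same $(3-2\alpha)/q$ loss; for $0<\alpha<1/2$ the low-frequency fractional part dominates and one recovers the standard admissible pairs with loss $2(1-\alpha)/q$. One must also handle the frequency localization carefully: decompose $f = \sum_N P_N f$ into Littlewood--Paley pieces adapted to $\mathbb T_h^d$, prove the dispersive bound on each piece with an $N$-dependent constant, and sum; the powers of $|\nabla_h|$ in the statement are exactly what makes this sum converge.

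Concretely the steps are: (1) set up the discrete Fourier transform and the symbol $\omega_h$, reduce the propagator bound to the kernel $K_{h,t}$; (2) rescale to a fixed torus and isolate the dependence on $s = h^{-2\alpha}|t|$; (3) run Littlewood--Paley decomposition and, on each dyadic shell, apply stationary phase — ordinary (Hessian nondegenerate) away from the bad set, van der Corput of order $3$ on the degenerate slices — to obtain the sharp time decay with the correct power of the frequency $N$; (4) interpolate with the trivial $L^2$ estimate and invoke the Keel--Tao machinery with the appropriate decay exponent ($d/3$ in cases of degeneracy, $d/(2\alpha)$ in the low-$\alpha$ case) to get the $L_t^q L_h^r$ bound; (5) sum the Littlewood--Paley pieces, which converges precisely because of the $|\nabla_h|^{(3-2\alpha)/q}$ resp.\ $|\nabla_h|^{2(1-\alpha)/q}$ factor, and check uniformity in $h$ by tracking that every constant depends only on the fixed-torus phase.

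I expect the main obstacle to be step (3): pinning down the exact decay rate of the oscillatory integral uniformly in $h$ over all frequency scales simultaneously. One has to reconcile two regimes — the continuum-like behavior of $\omega_h$ near $\xi = 0$ (where it looks like $|\xi|^{2\alpha}$) and the genuinely discrete behavior near the edge of $\tph$ (where the Hessian degenerates) — and show the worse of the two governs the estimate with constants that do not blow up as $h \to 0$. Making the van der Corput argument quantitative and uniform on the degenerate hypersurfaces, and correctly bookkeeping the interaction between the dyadic parameter $N$ and the scale $h$, is where the real work lies; the rest is a fairly standard (though careful) application of the $TT^*$ method. Since the excerpt indicates this theorem is in fact taken from the companion paper \cite{HY}, I would in practice cite that reference for the detailed oscillatory-integral estimates and only reproduce the reduction and the Keel--Tao step here.
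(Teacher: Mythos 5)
Your plan follows the paper's proof essentially verbatim: frequency-localized dispersive estimates for the kernel $K_{N,t}$ via van der Corput (second-order where $\varphi''\neq 0$, third-order at the degenerate point $\cos(h\xi_0)=\frac{1-\alpha}{\alpha}$, which exists only for $\alpha>\frac12$ --- this, rather than ``low-frequency fractional dominance,'' is the paper's reason the case $0<\alpha<\frac12$ is non-resonant), followed by Keel--Tao interpolation and a Littlewood--Paley square-function summation that converges precisely because of the $|\nabla_h|^{(3-2\alpha)/q}$ (resp.\ $|\nabla_h|^{2(1-\alpha)/q}$) weight. One caveat: \cite{HY} covers only $\alpha=1$, so the fractional oscillatory-integral estimates cannot be outsourced to that reference and must be carried out as in your step (3), which is exactly what the paper does in Proposition \ref{dispersive estimate for schrodinger}.
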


\begin{rem}\label{Strichartz estimates remarks}
\begin{enumerate}[(i)]
\item For fixed $h>0$ and $\alpha=1$, Strichartz estimates on the lattice $h\mathbb{Z}^d$ are established in Stefanov-Kevrekidis \cite{SK}. However, their constants blow up as $h\to 0$, so they cannot be directly applied to continuum limit problems. In our previous work \cite{HY}, developing harmonic analysis tools on the lattice $h\mathbb{Z}^d$, it is first observed that such inequalities may hold uniformly in $h>0$ paying additional fractional derivatives on the right hand side. Extending this result, in this paper, we obtain uniform Strichartz estimates for the one-dimensional discrete fractional Schr\"odinger equation, i.e., the case $d=1$, $0<\alpha<1$ and $\alpha\neq\frac{1}{2}$.
\item The admissible conditions \eqref{r-admissible} are different from those for the continuum equation \eqref{admissible}.
It is because the phase function in the integral representation of the solution $e^{-it(-\Delta_h)^\alpha}f$ via the discrete Fourier transform may have degenerate Hessian, and thus it only enjoys weaker dispersion. Such a phenomenon is sometimes referred to as \textit{lattice resonances}. Therefore, to compensate weaker dispersion, additional fractional derivatives are required on the right hand side for uniformity of Strichartz estimates. Interesting is absence of lattice resonances when interactions are more nonlocal, i.e., $0<\alpha<\frac{1}{2}$ (see the proof of Theorem \ref{Strichartz} $(ii)$).
\item Due to technical difficulties, fractional Schr\"odinger equations in multi-dimensions, that is, $0<\alpha<1$ and $d\geq 2$, are not included in Theorem \ref{Strichartz}. Indeed, the oscillatory integral associated with the fundamental solution of \eqref{DNLS} also may have degenerate Hessian. However, the phase function having degenerate Hessian is much more complicated to deal with in multi-dimensions (see \cite{BG} for instance).
\end{enumerate}
\end{rem}

The argument to justify the continuum limit (Theorem \ref{main theorem}) can be summarized as follows. First, as a direct consequence of the Strichartz estimates in Theorem \ref{Strichartz}, we get a ``time-averaged'' uniform-in-$h$ $L_h^\infty$-bound on solutions to discrete linear Schr\"odinger equations (Corollary \ref{linear L^infty bound}). Then, appying it to the nonlinear problem \eqref{DNLS}, we show that nonlinear solutions also satisfy a similar uniform $L_h^\infty$-bound (Proposition \ref{L^infty bound}). Having a better uniform bound at hand, we directly estimate the difference between two solutions in integral forms,
$$u(t)=e^{-it(-\Delta)^\alpha}u_0-i\lambda\int_0^t e^{-i(t-s)(-\Delta)^\alpha}(|u|^{p-1}u)(s)ds$$
and
$$p_hu_h(t)=p_he^{-it(-\Delta_h)^\alpha}u_{h,0}-i\lambda p_h\left[\int_0^t e^{-i(t-s)(-\Delta_h)^\alpha}(|u_h|^{p-1}u_h)(s)ds\right],$$
with some error estimates concerning the linear interpolation operator $p_h$ (see Proposition \ref{linear approx} and \ref{ph distribution}). Then, Theorem \ref{main theorem} follows by the standard Gronwall's lemma.

\begin{rem}

\begin{enumerate}[(i)]
\item The argument in \cite{KLS} relies on the uniform Sobolev inequality $\|f\|_{L_h^\infty}\lesssim \|f\|_{H_h^\alpha}$ (see Proposition \ref{GN and Sobolev}) and a uniform $H_h^\alpha$-bound on solutions to DNLS, which comes from the conservation laws. However, if $\alpha\leq\frac{d}{2}$, then such a Sobolev inequality fails, since so does $\|f\|_{L^\infty(\mathbb{R}^d)}\lesssim \|f\|_{H^\alpha(\mathbb{R}^d)}$. Hence, the assumptions $d=1$ and $\frac{1}{2}<\alpha\leq 1$ had to be imposed. Nevertheless, it turns out that thanks to dispersion, solutions to DNLS can be bounded uniformly in $L_h^\infty$ in a time-average sense even when $\alpha$ is smaller or in multi-dimensions. This observation not only allows us to extend the range of $d$ and $\alpha$, but also improves convergence in the continuum limit.
\item We believe that our stategy is robust, and it can be applied to other continuum limit problems. Indeed, the restriction to the choice of the Schr\"odinger operator $(-\Delta_h)^\alpha$ in this paper comes only from that uniform Strichartz estimates are currently available only in this case. Therefore, rigorous derivation of the fractional NLS (with a precise rate of convergence) in a more general setup as in \cite{KLS} could be reduced to proving uniform Strichartz estimates for the corresponding discrete linear flow.
\end{enumerate}
\end{rem}

\subsection{Organization of the paper}
The organization of this paper is as follows. In Section 2, we recall basic analysis tools on a lattice from \cite{Chat, KLS, HY}. In Section 3, we prove uniform Strichartz estimates (Theorem \ref{Strichartz}), which is the key inequality in this paper. Then, in Section 4, we show that nonlinear solutions also satisfy a time-averaged uniform $L_h^\infty$-bound. In Section 5, we discuss some important properties of discretization and linear interpolation. Finally, in Section 6, collecting all, we prove the main theorem (Theorem \ref{main theorem}).

\subsection{Notations}
We denote $A\lesssim B$ if there is a constant $C>0$, independent of $h>0$, such that $A\leq CB$, and denote $A\sim B$ if $A\lesssim B$ and $B\lesssim A$. With abuse of notation, we denote by $u_h(t)$ the solution to DNLS \eqref{DNLS} with initial data $u_{h,0}$, which is the discretization of $u_0$. Thus, $u_h(t)$ does not mean by the discretization of the solution $u(t)$ to NLS \eqref{NLS}.

\subsection{Acknowledgement}
This research of the first author was supported by Basic Science Research Program through the National Research Foundation of Korea(NRF) funded by the Ministry of Education (NRF-2017R1C1B1008215). The second author was supported in part by Samsung Science and Technology Foundation under Project Number SSTF-BA1702-02.

\section{Preliminaries}\label{Preliminaries}

In this section, we briefly review basic analysis tools on the lattice domain from \cite{Chat, KLS, HY}.

\subsection{Basic theory}
Let $h>0$ and $p\geq 1$. On the lattice $h\mathbb{Z}^d$, the natural Lebesgue space $L_h^p$ is defined by the collection of complex-valued functions on $h\mathbb{Z}^d$ equipped with the norm \eqref{L_h^p}. This function space is more or less the $\ell^p$-space of sequences having $d$ indices, because $\|f\|_{L_h^p}=h^{d/p}\|f\|_{\ell_x^p}$. Indeed, the Riemann sum for $|f(x)|^p$ on $\mathbb{R}^d$ is given by $h^{d}\sum_{x\in h\mathbb{Z}^d}|f(x)|^p$. Thus, putting $h^{d/p}$ in the norm is natural in the context of the continuum limit $h\to0$. It is easy to see from its connection to the $\ell^p$-space that we have H\"older's inequality
$$\|fg\|_{L_h^p} \le \|f\|_{L_h^{p_1}}\|g\|_{L_h^{p_2}},\quad \tfrac{1}{p}=\tfrac{1}{p_1}+\tfrac{1}{p_2},$$
and the standard duality relation
$$\|f\|_{L_h^p} =\sup_{\|g\|_{L_h^{p'}}\le 1} h^d\sum_{x\in h\mathbb{Z}^d}f(x)\overline{g(x)},\quad \tfrac{1}{p}+\tfrac{1}{p'}=1.$$

On the lattice $h\mathbb{Z}^d$, the definitions of the Fourier and the inverse Fourier transforms are reversed to those on a periodic box. For $f\in L_h^1$, its discrete Fourier transform is defined by
$$(\mathcal{F}_h f)(\xi):=h^d\sum_{x\in \mathbb{Z}_h^d} f(x)e^{-ix\cdot\xi}$$
on the periodic box $\frac{2\pi}{h}\mathbb{T}^d=[-\frac{\pi}{h},\frac{\pi}{h})^d$. On the other hand, the discrete inverse Fourier transform of $f\in L^1(\frac{2\pi}{h}\mathbb{T}^d)$ is defined by
$$(\mathcal{F}_h^{-1} f)(x)=\frac{1}{(2\pi)^d}\int_{h\mathbb{T}^d} f(\xi) e^{ix\cdot\xi}d\xi$$
on the lattice $h\mathbb{Z}^d$. For rapidly decreasing functions $f$ and $g$ on $h\mathbb{Z}^d$, the Plancherel theorem
$$h^d\sum_{x\in h\mathbb{Z}^d} f(x)\overline{g(x)}=\frac{1}{(2\pi)^d}\int_{\frac{2\pi}{h}\mathbb{T}^d}(\mathcal{F}_hf)(\xi)\overline{(\mathcal{F}_hg)(\xi)}d\xi$$
holds. Then, by the standard duality argument, both the discrete Fourier and the discrete inverse Fourier transforms are extended to $L_h^2$ (respectively, $L^2$)-functions.

\subsection{Sobolev spaces and the Littlewood-Paley theory}
On the lattice $h\mathbb{Z}^d$, the homogeneous differential operator $|\nabla_h|^s$ is defined by $\mathcal{F}_h\left(|\nabla_h|^sf\right)(\xi)=|\xi|^s(\mathcal{F}_hf)(\xi)$ on $\frac{2\pi}{h}\mathbb{T}^d$, while the inhomogeneous differential operator $\langle\nabla_h\rangle^s$ is defined by $\mathcal{F}_h(\langle\nabla_h\rangle^sf)(\xi)=(1+|\xi|^2)^{\frac{s}{2}}(\mathcal{F}_hf)(\xi)$. The Sobolev space $W_h^{s,p}$ (respectively, $\dot{W}_h^{s,p}$) is defined as the Banach space equipped with the norm
\begin{equation}\label{def:W}
\|f\|_{W_h^{s,p}} := \|\langle\nabla_h\rangle^sf\|_{L_h^p}\quad\left(\textup{respectively, }\|f\|_{\dot W_h^{s,p}} := \||\nabla_h|^s f\|_{L_h^p}\right).
\end{equation}
In particular, when $p=2$, we denote
$$H_h^s:=W_h^{s,2}\textup{ and }\dot{H}_h^s:=\dot{W}_h^{s,2}.$$
Indeed, there are several other natural ways to define the Sobolev spaces. In \cite{HY}, developing the Calderon-Zygmund theory on a lattice, they are shown to be equivalent.
\begin{prop}[Norm equivalence \cite{HY}]\label{norm equivalence}
	For any $1<p<\infty$, we have
	$$\|f\|_{\dot{W}_h^{s,p}}\sim \|(-\Delta_h)^{\frac{s}{2}}f\|_{L_h^p}\quad\forall s\in\mathbb{R}$$
	and
	$$\|f\|_{\dot{W}_h^{1,p}}\sim \sum_{j=1}^d\|D_{j;h}^+f\|_{L_h^p},$$
	where $D_{j;h}^+f(x):=\frac{f(x+he_j)-f(x)}{h}$.
\end{prop}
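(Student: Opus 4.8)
The plan is to reduce both equivalences to the uniform-in-$h$ $L_h^p$-boundedness of a handful of explicit Fourier multipliers, and then to invoke the discrete Hörmander--Mikhlin/Calderón--Zygmund machinery of \cite{HY}. First I would compute the symbol of the discrete Laplacian: applying $\mathcal{F}_h$ to \eqref{discrete Laplacian} gives
$$\mathcal{F}_h(-\Delta_h f)(\xi) = m_h(\xi)\,(\mathcal{F}_h f)(\xi),\qquad m_h(\xi):=\frac{4}{h^2}\sum_{j=1}^d\sin^2\!\Big(\frac{h\xi_j}{2}\Big),$$
on the box $\tph=[-\pi/h,\pi/h)^d$. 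Since $h\xi_j/2\in[-\pi/2,\pi/2)$ there, the elementary bound $\tfrac{2}{\pi}|t|\le|\sin t|\le|t|$ yields $m_h(\xi)\sim|\xi|^2$ pointwise on $\tph$, with constants independent of $h$. Consequently $(-\Delta_h)^{s/2}$ and $|\nabla_h|^s$ have comparable symbols $m_h^{s/2}$ and $|\xi|^s$, and the task becomes showing that the ratio multiplier
$$\rho_h(\xi):=\Big(\frac{m_h(\xi)}{|\xi|^2}\Big)^{s/2}$$
and its reciprocal define operators bounded on $L_h^p$, uniformly in $h$, for every $1<p<\infty$.

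The key simplification is the rescaling $\xi=\eta/h$, $\eta\in[-\pi,\pi)^d$, which identifies the operator norm on $L_h^p(h\Zd)$ with one on $\ell^p(\Zd)$ (the $h^{d/p}$ factors cancel) and turns $\rho_h$ into the single, $h$-independent symbol
$$\rho(\eta)=\Big(\frac{4\sum_{j=1}^d\sin^2(\eta_j/2)}{|\eta|^2}\Big)^{s/2}.$$
Thus uniform-in-$h$ boundedness is reduced to $L^p$-boundedness of one fixed multiplier. I would then verify the scale-invariant derivative estimates $|\partial_\eta^\beta\rho(\eta)|\lesssim|\eta|^{-|\beta|}$ for $|\beta|\le\lfloor d/2\rfloor+1$: the quantity $\tfrac{4\sum_j\sin^2(\eta_j/2)}{|\eta|^2}=\tfrac{2\sum_j(1-\cos\eta_j)}{|\eta|^2}$ is bounded above and below by positive constants on the box (it tends to $1$ at the origin and stays positive up to the corners), and it differs from a constant by a function whose homogeneous leading part at $\eta=0$ has degree $2$; hence all its derivatives obey the Mikhlin bounds, and the same follows for the smooth power $\rho=(\cdot)^{s/2}$ and for $\rho^{-1}$. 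Feeding these into the discrete multiplier theorem of \cite{HY} gives $\|\rho_h(\nabla_h)g\|_{L_h^p}\lesssim\|g\|_{L_h^p}$ and the analogous bound for $\rho_h^{-1}$, which is exactly the first asserted equivalence for all $s\in\R$ (the argument being insensitive to the sign of $s$).

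For the second equivalence I would factor through discrete Riesz-type transforms. The forward difference $D_{j;h}^+$ has symbol $d_j(\xi)=\frac{e^{ih\xi_j}-1}{h}=\frac{2i}{h}e^{ih\xi_j/2}\sin(h\xi_j/2)$, so that $\sum_{j}|d_j(\xi)|^2=m_h(\xi)\sim|\xi|^2$. Writing $D_{j;h}^+=R_{j}|\nabla_h|$ with $R_j$ of symbol $d_j(\xi)/|\xi|$, and conversely $|\nabla_h|=\sum_j\widetilde R_j D_{j;h}^+$ with $\widetilde R_j$ of symbol $|\xi|\,\overline{d_j(\xi)}/m_h(\xi)$, I would check that each of these symbols is bounded (using $|d_j(\xi)|\le|\xi_j|$) and satisfies the same uniform Mikhlin conditions as above, again becoming fixed $h$-independent symbols after rescaling. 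The discrete multiplier theorem then yields $\|D_{j;h}^+f\|_{L_h^p}\lesssim\||\nabla_h|f\|_{L_h^p}$ for each $j$ and, summing, $\||\nabla_h|f\|_{L_h^p}\lesssim\sum_j\|D_{j;h}^+f\|_{L_h^p}$, which together give $\|f\|_{\dot W_h^{1,p}}\sim\sum_j\|D_{j;h}^+f\|_{L_h^p}$.

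The main obstacle is not the pointwise symbol comparison, which is elementary, but the passage from comparable symbols to comparable $L_h^p$-norms uniformly in $h$. This requires a genuinely discrete Hörmander--Mikhlin theorem, and the delicate point is that the symbols live on the Brillouin zone $\tph$: besides the usual singularity at $\xi=0$, the multipliers $\rho_h,\rho_h^{-1}$ are only continuous --- not smooth --- across the identification of the faces $\xi_j=\pm\pi/h$, since $|\xi|^2$ fails to be periodic there. Verifying that the lattice Calderón--Zygmund theory of \cite{HY} accommodates this boundary behavior (equivalently, that the associated kernels satisfy the Calderón--Zygmund estimates with $h$-independent constants) is the crux; the rescaling $\xi=\eta/h$ is what reduces the required uniformity in $h$ to a single fixed estimate and isolates this as the only substantive issue.
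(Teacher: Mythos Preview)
The paper does not give its own proof of this proposition: it is quoted from \cite{HY}, with only the one-line indication that ``developing the Calder\'on--Zygmund theory on a lattice, they are shown to be equivalent.'' Your proposal is precisely that strategy --- reduce both norm comparisons to the $L_h^p$-boundedness of explicit Fourier multipliers (the ratio $(m_h/|\xi|^2)^{s/2}$ and the discrete Riesz transforms $d_j/|\xi|$, $|\xi|\,\overline{d_j}/m_h$), rescale $\xi=\eta/h$ to make the symbols $h$-independent, verify Mikhlin-type derivative bounds, and invoke the discrete H\"ormander--Mikhlin theorem of \cite{HY} --- so it matches the intended route.

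Your identification of the one genuine subtlety is also correct: the symbols involve $|\xi|$, which is not periodic on the Brillouin zone, so the multipliers are not smooth across the faces $\xi_j=\pm\pi/h$ even though they are perfectly well behaved near the origin. This is exactly the point where the lattice Littlewood--Paley decomposition of \cite{HY} (the projections $P_N$ in \eqref{LP}, whose cutoffs $\psi(2\pi h\xi/N)$ are compactly supported strictly inside the box for every $N\le 1$) earns its keep: it localizes the analysis to dyadic shells that avoid the boundary, and the remaining high-frequency piece (the contribution near the corners) is handled by the kernel estimates in the Calder\'on--Zygmund theory. So your flagged ``crux'' is real, and it is precisely what the machinery of \cite{HY} is designed to absorb; once you grant that theorem, the rest of your argument goes through as written.
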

Contrary to the whole space $\mathbb{R}^d$, differential operators on the lattice $h\mathbb{Z}^d$ are bounded operators. A high Sobolev norm is bounded by a lower one even though the implicit constant blows up as $h\to0$.
\begin{lem}\label{H1 bound depending on h}
Let $h>0$ and $0\leq s\leq 1$. Then, $\|f\|_{\dot H_h^1} \lesssim h^{-(1-s)} \|f\|_{\dot H_h^s}$.
\end{lem}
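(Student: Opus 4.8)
The plan is to work on the frequency side via the discrete Fourier transform, exploiting the fact that on the lattice $h\mathbb{Z}^d$ the frequency variable lives on the bounded box $[-\tfrac{\pi}{h},\tfrac{\pi}{h})^d$, so that differentiation is comparable to multiplication by a bounded symbol. By the definition \eqref{def:W} of the homogeneous norms and the Plancherel theorem,
\[
\|f\|_{\dot H_h^1}^2=\frac{1}{(2\pi)^d}\int_{[-\pi/h,\pi/h)^d}|\xi|^2\,|(\mathcal F_h f)(\xi)|^2\,d\xi,\qquad \|f\|_{\dot H_h^s}^2=\frac{1}{(2\pi)^d}\int_{[-\pi/h,\pi/h)^d}|\xi|^{2s}\,|(\mathcal F_h f)(\xi)|^2\,d\xi .
\]

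The key step is the elementary pointwise bound on the box: for every $\xi\in[-\tfrac{\pi}{h},\tfrac{\pi}{h})^d$ we have $|\xi|\le \tfrac{\pi\sqrt d}{h}$, and since $1-s\ge 0$,
\[
|\xi|^2=|\xi|^{2s}\,|\xi|^{2(1-s)}\le\Big(\tfrac{\pi\sqrt d}{h}\Big)^{2(1-s)}|\xi|^{2s}.
\]
Inserting this into the first integral and comparing with the second gives $\|f\|_{\dot H_h^1}^2\le(\pi\sqrt d)^{2(1-s)}h^{-2(1-s)}\|f\|_{\dot H_h^s}^2$; taking square roots yields $\|f\|_{\dot H_h^1}\le(\pi\sqrt d)^{1-s}h^{-(1-s)}\|f\|_{\dot H_h^s}$. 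Finally, for $0\le s\le 1$ the prefactor $(\pi\sqrt d)^{1-s}$ is bounded by $\max\{1,\pi\sqrt d\}$, a constant depending only on $d$, which is the asserted implicit constant independent of $h$. (The case $\|f\|_{\dot H_h^s}=0$ is trivial, forcing $\mathcal F_h f=0$ a.e.\ on the box and hence $f=0$.)

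There is essentially no obstacle here; the point worth isolating is purely structural, namely that on a lattice the Fourier support is compact and confined to $|\xi|\lesssim h^{-1}$, so a higher Sobolev norm is dominated by a lower one at the cost of a negative power of $h$. Alternatively, the endpoint $s=0$ follows at once from Proposition \ref{norm equivalence} and the triangle inequality $\|D_{j;h}^+f\|_{L_h^2}\le \tfrac2h\|f\|_{L_h^2}$, but the Fourier argument handles all $s\in[0,1]$ uniformly and will be the natural one to cite when this lemma is combined with the gain $h^{\alpha/(1+\alpha)}$ in Theorem \ref{main theorem}.
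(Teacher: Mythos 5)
Your proof is correct and follows essentially the same route as the paper: Plancherel on the frequency box $[-\tfrac{\pi}{h},\tfrac{\pi}{h})^d$ followed by the pointwise bound $|\xi|^{1-s}\lesssim h^{-(1-s)}$ there. You simply spell out the elementary inequality and the constant more explicitly than the paper does.
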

\begin{proof} By the Plancherel theorem, we prove that
$$\|f\|_{\dot H_h^1} = \frac{1}{(2\pi)^{d/2}}\| |\xi| (\mathcal{F}_hf) \|_{L^2(\frac{2\pi}{h}\mathbb{T}^d)}\lesssim  \frac{h^{-(1-s)}}{(2\pi)^{d/2}}\||\xi|^s (\mathcal{F}_hf) \|_{L^2(\frac{2\pi}{h}\mathbb{T}^d)}=h^{-(1-s)} \|f\|_{\dot H_h^s},$$
where in the inequality, we used that $\xi\in\frac{2\pi}{h}\mathbb{T}^d$.
\end{proof}

Let $\phi:\mathbb{R}^d\to[0,1]$ be a radially symmetric smooth bump function such that $\phi(\xi)=1$ for $|\xi|\leq 1$ but $\phi(\xi)= 0$ for $|\xi|\geq 2$, and let $\psi:=\phi-\phi(\frac{\cdot}{2})$. 
For a dyadic number $N\in 2^{\mathbb{Z}}$ with $N\leq 1$, we define the Littlewood-Paley projection operator $P_N=P_{N;h}$ as the Fourier multiplier operator given by
\begin{equation}\label{LP}
\mathcal{F}_h(P_{N}f)(\xi)=\psi(\tfrac{2\pi h\xi}{N})(\mathcal{F}_hf)(\xi).
\end{equation}
Here, with abuse of notation, $\psi(\frac{2\pi h\xi}{N})$ denotes the function $\psi(\frac{2\pi h\xi}{N})$ restricted to the frequency domain $\frac{2\pi}{h}\mathbb{T}^d$. Then, $\sum_{N\leq 1}P_N=1$, because $\sum_{N\leq 1}\psi(\frac{2\pi\xi}{Nh})\equiv 1$ on $\frac{2\pi}{h}\mathbb{T}^d$.

The following Litttlewood-Paley inequalities are useful in our analysis in that it allows us to handle different frequencies separately.
\begin{prop}[Littlewood-Paley inequalities \cite{HY}]\label{LP inequalities}
For $1<p<\infty$, we have
$$\|f\|_{L_h^p}\lesssim\left\|\bigg\{\sum_{N\leq 1}|P_N f|^2\bigg\}^{1/2}\right\|_{L_h^p}\lesssim \|f\|_{L_h^p}.$$
\end{prop}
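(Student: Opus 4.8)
My plan is to prove the upper (square-function) estimate $\big\|\{\sum_{N\le1}|P_Nf|^2\}^{1/2}\big\|_{L_h^p}\lesssim\|f\|_{L_h^p}$ for $1<p<\infty$ with a constant independent of $h$, and then obtain the reverse estimate from it by duality. First, for $p=2$ both inequalities are immediate: by the Plancherel theorem they reduce to the fact that the frequency supports $\{\,\psi(\tfrac{2\pi h\xi}{N})\neq0\,\}$, $N\le1$, have overlap bounded by an absolute constant, so that $\sum_N|\mathcal F_h(P_Nf)|^2\sim|\mathcal F_hf|^2$ pointwise. For general $p$ I would deduce the reverse estimate as follows. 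By duality $\|f\|_{L_h^p}=\sup\{\,|h^d\sum_{x}f(x)\overline{g(x)}|:\|g\|_{L_h^{p'}}\le1\,\}$; introducing fattened projections $\widetilde P_N$ (the Fourier multipliers with a real, even symbol $\widetilde\psi(\tfrac{2\pi h\xi}{N})$ equal to $1$ on the support of $\psi(\tfrac{2\pi h\xi}{N})$, so that $\widetilde P_N$ is self-adjoint and $\widetilde P_NP_N=P_N$), one writes $h^d\sum_x f\overline g=\sum_{N\le1}h^d\sum_x(P_Nf)\overline{(\widetilde P_Ng)}$, applies the Cauchy--Schwarz inequality in $N$ (pointwise in $x$) and then H\"older's inequality in $x$, arriving at $\|f\|_{L_h^p}\lesssim\|\{\sum_N|P_Nf|^2\}^{1/2}\|_{L_h^p}\,\|\{\sum_N|\widetilde P_Ng|^2\}^{1/2}\|_{L_h^{p'}}$; the second factor is $\lesssim\|g\|_{L_h^{p'}}$ once the square-function estimate is known for the family $\{\widetilde P_N\}$ at the exponent $p'$. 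Hence everything reduces to the square-function estimate for a Littlewood--Paley-type family subject only to finite overlap of supports.

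To prove that estimate I would randomize. By Khintchine's inequality, with $\{\varepsilon_N\}$ independent random signs, $\big\|\{\sum_{N\le1}|P_Nf|^2\}^{1/2}\big\|_{L_h^p}^p\sim\mathbb E_\varepsilon\big\|\sum_{N\le1}\varepsilon_NP_Nf\big\|_{L_h^p}^p$, so it suffices to bound $\|T_\varepsilon f\|_{L_h^p}\lesssim\|f\|_{L_h^p}$ uniformly in $\varepsilon$ and in $h$, where $T_\varepsilon$ is the Fourier multiplier on $h\Zd$ with symbol $m_\varepsilon(\xi):=\sum_{N\le1}\varepsilon_N\psi(\tfrac{2\pi h\xi}{N})$ on the torus $\frac{2\pi}h\mathbb T^d$. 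I would then check that $m_\varepsilon$ satisfies the H\"ormander--Mikhlin bounds $|\partial_\xi^\gamma m_\varepsilon(\xi)|\lesssim_\gamma|\xi|^{-|\gamma|}$ for $0\le|\gamma|\le d+1$, with constants independent of $\varepsilon$ and $h$: each summand $\psi(\tfrac{2\pi h\xi}{N})$ is supported where $|\xi|\sim N/h$ and, being a dilate of the fixed bump $\psi$, satisfies $|\partial_\xi^\gamma[\psi(\tfrac{2\pi h\xi}{N})]|\lesssim_\gamma(h/N)^{|\gamma|}\sim|\xi|^{-|\gamma|}$ there, while for each fixed $\xi$ only boundedly many summands are nonzero.

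Finally I would invoke the $h$-uniform Mikhlin--H\"ormander multiplier theorem on $h\Zd$ from \cite{HY}: a Fourier multiplier whose symbol obeys the bounds above is bounded on $L_h^p$ for every $1<p<\infty$, with a constant depending only on $d$, $p$ and the constants in the H\"ormander bounds, and in particular independent of $h$. Applying this to each $T_\varepsilon$ and integrating over the signs gives the square-function estimate, and the proposition follows. I expect the only genuine obstacle to be this $h$-uniform multiplier theorem rather than the randomization or duality steps: on a lattice the Fourier variable lives on the $h$-dependent torus $\frac{2\pi}h\mathbb T^d$, not on $\R^d$, and a periodic extension of $m_\varepsilon$ need not be a Mikhlin multiplier on $\R^d$, so the Euclidean theorem cannot simply be transferred; one must instead run the Calder\'on--Zygmund machinery directly on $h\Zd$ and verify that the resulting kernel and maximal-function estimates are scale invariant, which is precisely the content of \cite{HY}. (Equivalently, one may rescale $h\Zd\to\Zd$ so that the torus becomes fixed and the $h$-uniformity is automatic, but then one still needs the corresponding discrete multiplier theory on $\Zd$.)
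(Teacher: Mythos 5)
The paper itself gives no proof of this proposition: it is quoted from \cite{HY}, whose lattice Calder\'on--Zygmund/multiplier theory is precisely the ingredient you isolate at the end. Your argument --- Khintchine randomization reducing the square-function bound to an $h$-uniform Mikhlin--H\"ormander multiplier theorem on $h\mathbb{Z}^d$ (equivalently, after the rescaling you mention, a fixed multiplier theorem on $\mathbb{Z}^d$ with symbol bounds on the unit torus), together with the standard duality step using fattened projections $\widetilde P_N$ for the reverse inequality --- is correct and is essentially the standard route taken in the cited reference, with the only external input being the discrete multiplier theorem that you correctly attribute to \cite{HY}.
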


As applications, one can derive the Gagliardo-Nirenberg and the Sobolev inequalities \cite{Chat, KLS, HY}.

\begin{prop}\label{GN and Sobolev}\cite{HY}
Let $h>0$. Suppose that $1\leq p<q\leq\infty$ and $s>0$.
\begin{enumerate}[(i)]
\item (Gagliardo-Nirenberg inequality)
	If $\frac{1}{q}=\frac{1}{p}-\frac{\theta s}{d}$ and $0<\theta<1$, then
	$$\|f\|_{L_h^q}\lesssim \|f\|_{L_h^p}^{1-\theta}\|f\|_{\dot{W}_h^{s,p}}^\theta.$$
\item (Sobolev inequality)
	If $\frac{1}{q}=\frac{1}{p}-\frac{s}{d}$ and $q<\infty$, then
	$$\|f\|_{L_h^q}\lesssim \|f\|_{W_h^{s,p}}.$$
	If $q=\infty$ and $s>\frac{d}{p}$, then   
$$\|f\|_{L_h^\infty} \lesssim \|f\|_{W_h^{s,p}}.$$
\end{enumerate}
\end{prop}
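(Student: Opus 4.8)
The plan is to prove both inequalities by the Littlewood--Paley method, using frequency-localized estimates that hold uniformly in $h$. The essential preliminary is a pair of \emph{uniform Bernstein inequalities}: for every dyadic $N\le 1$ and $1\le p\le q\le\infty$,
\[
\|P_N f\|_{L_h^q}\lesssim (N/h)^{d(\frac1p-\frac1q)}\|P_N f\|_{L_h^p},\qquad \||\nabla_h|^s P_N f\|_{L_h^p}\sim (N/h)^s\|P_N f\|_{L_h^p},
\]
with constants independent of $h$. I would obtain these by writing $P_N$ and $|\nabla_h|^s P_N$ as discrete convolution against the kernels $\mathcal F_h^{-1}[\psi(\tfrac{2\pi h\xi}{N})]$ and $\mathcal F_h^{-1}[|\xi|^s\psi(\tfrac{2\pi h\xi}{N})]$ and applying Young's inequality on $h\mathbb Z^d$. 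The point is that, since $N\le 1$, these kernels are concentrated at spatial scale $h/N\ge h$ and hence are \emph{resolved} by the lattice; a change of variables then shows their $L_h^r$-norms scale exactly as in the continuum, giving constants uniform in $h$. I also use repeatedly that each $P_N$ is bounded on $L_h^p$ uniformly in $h$ (Proposition \ref{LP inequalities} for $1<p<\infty$, and the uniform $L_h^1$ kernel bound at the endpoints).

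For the Gagliardo--Nirenberg inequality (i) I would argue by splitting the frequencies at a balancing scale. Writing $f=\sum_{N\le 1}P_N f$, then the triangle inequality and Bernstein with $d(\frac1p-\frac1q)=\theta s$ give $\|f\|_{L_h^q}\lesssim\sum_{N\le 1}(N/h)^{\theta s}\|P_N f\|_{L_h^p}$. Combining the two bounds $\|P_N f\|_{L_h^p}\lesssim\|f\|_{L_h^p}$ and $\|P_N f\|_{L_h^p}\lesssim (h/N)^{s}\|f\|_{\dot W_h^{s,p}}$ (the latter from the derivative Bernstein estimate), I split the sum at the scale $N_0$ with $(N_0/h)^s\sim\|f\|_{\dot W_h^{s,p}}/\|f\|_{L_h^p}$. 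Since $\||\nabla_h|^s f\|_{L_h^p}\lesssim h^{-s}\|f\|_{L_h^p}$ (the $L^p$-analogue of Lemma \ref{H1 bound depending on h}, because $|\xi|\le\sqrt d\,\pi/h$ on $\tfrac{2\pi}{h}\mathbb T^d$), one has $N_0\lesssim 1$, so the split is legitimate. On $N\le N_0$ the first bound with the geometric series $\sum(N/h)^{\theta s}$ (convergent as $\theta s>0$) gives $\lesssim(N_0/h)^{\theta s}\|f\|_{L_h^p}$; on $N_0<N\le 1$ the second bound with $\sum(N/h)^{(\theta-1)s}$ (convergent as $\theta<1$) gives $\lesssim(N_0/h)^{(\theta-1)s}\|f\|_{\dot W_h^{s,p}}$. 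Inserting the value of $N_0$, both terms equal $\|f\|_{L_h^p}^{1-\theta}\|f\|_{\dot W_h^{s,p}}^{\theta}$, proving (i). The Sobolev endpoint $q=\infty$ in (ii) is exactly the case $\theta=d/(ps)$ of this argument (admissible since $s>d/p$ forces $\theta<1$); the arithmetic--geometric mean inequality together with the multiplier bounds $\|f\|_{L_h^p},\|f\|_{\dot W_h^{s,p}}\lesssim\|f\|_{W_h^{s,p}}$ then converts the product into $\|f\|_{W_h^{s,p}}$.

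The genuinely delicate case is the critical Sobolev embedding $q<\infty$ in (ii), where $\theta=1$ and the second geometric series above diverges, so frequency splitting alone fails: this is the point where a fractional-integration (Hardy--Littlewood--Sobolev) input is unavoidable. Since $\|f\|_{\dot W_h^{s,p}}\lesssim\|f\|_{W_h^{s,p}}$ (bounded multiplier $|\nabla_h|^s\langle\nabla_h\rangle^{-s}$), it suffices to prove the homogeneous bound $\|f\|_{L_h^q}\lesssim\||\nabla_h|^s f\|_{L_h^p}$ uniformly in $h$. I would first establish the \emph{weak-type} estimate $\|f\|_{L_h^{q,\infty}}\lesssim\|g\|_{L_h^p}$, where $g=|\nabla_h|^s f$. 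Fixing $\lambda>0$, cut at a frequency $M$ and write $f=f_{\mathrm{low}}+f_{\mathrm{high}}$ with $f_{\mathrm{low}}=\sum_{N\le M}P_Nf$. Bernstein into $L_h^\infty$ gives $\|f_{\mathrm{low}}\|_{L_h^\infty}\lesssim\sum_{N\le M}(N/h)^{d/p-s}\|g\|_{L_h^p}\lesssim(M/h)^{d/p-s}\|g\|_{L_h^p}$ (geometric, using $s<d/p$), while the lattice multiplier bound $\||\nabla_h|^{-s}P_{>M}\|_{L_h^p\to L_h^p}\lesssim(h/M)^{s}$ (of the type underlying Propositions \ref{norm equivalence} and \ref{LP inequalities}) gives $\|f_{\mathrm{high}}\|_{L_h^p}\lesssim(h/M)^s\|g\|_{L_h^p}$. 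Choosing $M$ so that $\|f_{\mathrm{low}}\|_{L_h^\infty}\le\lambda/2$ and estimating $|\{|f|>\lambda\}|_h\le|\{|f_{\mathrm{high}}|>\lambda/2\}|_h$ by Chebyshev, a direct computation in which the exponents collapse by the critical relation $\tfrac1q=\tfrac1p-\tfrac sd$ yields $|\{|f|>\lambda\}|_h\lesssim(\|g\|_{L_h^p}/\lambda)^q$, uniformly in $h$. Finally I would upgrade weak type to strong type $L_h^q$ by Marcinkiewicz interpolation applied to the linear map $g\mapsto f$ between two such weak bounds at exponents $p_0<p<p_1$ in $(1,d/s)$, each lying on the fractional-integration line for the same $s$ (so $q_0\neq q_1$); since both weak constants are $h$-independent, so is the resulting strong bound.

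The main obstacle is precisely this last case: securing the weak-type endpoint uniformly in $h$ and upgrading it, which is the discrete, uniform analogue of Hardy--Littlewood--Sobolev and cannot be reached by the elementary splitting used for (i). Its crucial uniform ingredients are the Bernstein inequalities and the multiplier bound for $|\nabla_h|^{-s}P_{>M}$, both of which rely on the scaling of the associated lattice convolution kernels being faithful exactly because $N\le 1$ keeps every relevant kernel at spatial scale $\ge h$. The endpoints $p=1$ and $p=\infty$ lie outside the Littlewood--Paley square-function range and would be treated separately, directly from the kernel bounds.
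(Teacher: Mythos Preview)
The paper does not prove this proposition at all; it is simply quoted from \cite{HY} as a preliminary, so there is no proof here to compare your sketch against.

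On its own merits, your outline is the standard Littlewood--Paley route and is essentially sound for the cases the paper actually uses (namely $p=2$). The uniform Bernstein inequalities you state are exactly the right engine, and your frequency-balancing argument for (i) is correct: the point that $N_0\lesssim 1$ (so the split stays inside the admissible dyadic range) is the only lattice-specific wrinkle, and you handle it. For the critical embedding in (ii) your weak-type plus Marcinkiewicz scheme is also the canonical one; the multiplier bound $\||\nabla_h|^{-s}P_{>M}\|_{L_h^p\to L_h^p}\lesssim (h/M)^s$ that you invoke is indeed a consequence of the same kernel-scaling arguments that yield Bernstein.

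One cautionary remark: the proposition is stated for all $1\le p$, and your interpolation step explicitly restricts to $p_0<p<p_1$ with $p_0,p_1\in(1,d/s)$, so your scheme as written does not reach $p=1$ in the critical case of (ii). Your closing sentence flags this but does not say how; note that in the continuum the fractional critical embedding $\dot W^{s,1}\hookrightarrow L^{d/(d-s)}$ actually \emph{fails} for noninteger $s$, so ``directly from the kernel bounds'' will not rescue it. If you intend to cover $p=1$, you would need to restrict to integer $s$ and argue via the discrete analogue of the Gagliardo proof (iterated one-dimensional fundamental theorem of calculus) rather than via fractional integration. Since the paper only invokes this proposition with $p=2$, this gap is harmless for the present application, but it is a genuine lacuna if you claim the full stated range.
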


\section{Uniform Strichartz Estimates (Proof of Theorem \ref{Strichartz})}

As mentioned in the introduction, uniform Strichartz estimates for the discrete linear Schr\"odinger equation (Theorem \ref{Strichartz}) will play a crucial role in our analysis. When $\alpha=1$, such uniform Strichartz estimates have been established in our previous work \cite[Theorem 1.3]{HY}. In this section, adapting the strategy and the harmonic analysis tools in \cite{HY} to the fractional Schr\"odinger case, we establish uniform Strichartz estimates which are not covered in the previous result, that is, the case $d=1$, $0<\alpha<1$ and $\alpha\neq\frac{1}{2}$. Indeed, the desired uniform Strichartz estimates follow from the dispersive estimates.

\begin{prop}[Frequency localized dispersive estimates]\label{dispersive estimate for schrodinger}
Suppose that $d=1$, $0<\alpha<1$ and $\alpha\neq\frac{1}{2}$. For each dyadic number $N\in 2^{\mathbb{Z}}$ with $N\leq 1$, let $P_N$ be the Littlewood-Paley projection given in \eqref{LP}. Then, the following hold.
\begin{enumerate}[(i)]
\item (Non-resonance case) If $0<\alpha<\frac{1}{2}$, then 
$$\left\|e^{-it(-\Delta_h)^\alpha} P_N f\right\|_{L_h^\infty} \lesssim \left(\frac Nh\right)^{1-\alpha}\frac{1}{|t|^{1/2}} \|f\|_{L_h^1}.$$
\item (Resonance case)  If $\frac{1}{2}<\alpha<1$, then 
$$\left\|e^{-it(-\Delta_h)^\alpha} P_N f\right\|_{L_h^\infty} \lesssim \left(\frac{N}{h}\right)^{1-\frac{2\alpha}{3}} \frac{1}{|t|^{1/3}} \|f\|_{L_h^1}.$$
\end{enumerate}
\end{prop}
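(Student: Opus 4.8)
The plan is to establish the frequency-localized dispersive estimates via the standard $TT^*$-type reduction to an oscillatory integral bound, using the discrete Fourier representation. Writing $u_h(t) = e^{-it(-\Delta_h)^\alpha}P_N f$, the Plancherel theorem on the periodic box $\frac{2\pi}{h}\mathbb{T}$ gives
$$
(e^{-it(-\Delta_h)^\alpha}P_N f)(x) = \frac{1}{2\pi}\int_{\frac{2\pi}{h}\mathbb{T}} e^{ix\xi}\, e^{-it\, \omega_h(\xi)^\alpha}\, \psi\!\left(\tfrac{2\pi h\xi}{N}\right)(\mathcal{F}_h f)(\xi)\, d\xi,
$$
where $\omega_h(\xi)$ is the symbol of $-\Delta_h$, namely $\omega_h(\xi) = \frac{4}{h^2}\sin^2(\tfrac{h\xi}{2})$ in $d=1$. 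By Young's inequality (the discrete convolution structure), it suffices to bound the $L_h^\infty$ norm of the kernel
$$
K_{N}(t,x) = \frac{1}{2\pi}\int_{\frac{2\pi}{h}\mathbb{T}} e^{ix\xi - it\,\omega_h(\xi)^\alpha}\, \psi\!\left(\tfrac{2\pi h\xi}{N}\right) d\xi,
$$
uniformly in $x$, by the claimed $N,h,t$-dependent quantity. First I would rescale $\xi = \tfrac{\eta}{h}$ to move to the fixed box $\eta \in 2\pi\mathbb{T}$, turning the phase into $\tfrac{x}{h}\eta - \tfrac{t}{h^{2\alpha}}\Phi(\eta)$ with $\Phi(\eta) = (4\sin^2(\eta/2))^\alpha$; this absorbs all the $h$-dependence into the rescaled time $s := t/h^{2\alpha}$ and space $y := x/h$. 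On the support of $\psi(\eta/N)$, i.e. $|\eta|\sim N$, I would further perform a dyadic rescaling $\eta = N\zeta$, so that the effective phase becomes $Ny\zeta - s\,\Phi(N\zeta)$ with $|\zeta|\sim 1$.

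The core of the argument is then a stationary phase / van der Corput analysis of $\Phi$ near the frequency scale $N$. The key structural fact is the behavior of the derivatives of $\Phi(\eta) = (4\sin^2(\eta/2))^\alpha$: near $\eta = 0$ one has $\Phi(\eta)\sim |\eta|^{2\alpha}$, so for $|\eta|\sim N$ small, $\Phi''(\eta)\sim N^{2\alpha-2}$, which never vanishes — this is the \emph{non-resonance} situation. When $\frac12<\alpha<1$, however, one must track where $\Phi''$ can vanish inside the full box $2\pi\mathbb{T}$: the second derivative of $(\sin^2(\eta/2))^\alpha$ does have zeros (away from $\eta=0$), but at such a point the third derivative $\Phi'''$ is nonvanishing, giving a genuine degenerate (Airy-type) stationary point — this is the \emph{resonance} case, and van der Corput with a second-derivative bound fails there, forcing the use of the third-derivative bound, which is why the decay drops to $|t|^{-1/3}$ and the derivative loss becomes $\frac{2\alpha}{3}$ instead of $\alpha$. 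Concretely: in case (i), $0<\alpha<\frac12$, I claim $|\Phi''(\eta)|\gtrsim$ (something like $|\eta|^{2\alpha-2}$, hence $\gtrsim N^{2\alpha-2}$ on $|\eta|\sim N$) throughout the relevant region with no interior zero, so van der Corput's lemma (second-derivative version) gives $|K_N(t,x)|\lesssim (N^{2\alpha-2}s)^{-1/2}\cdot N \cdot \tfrac1h$ after unwinding the two rescalings (the $N/h$ from the measure of the $\eta$-support, the $(N^{2\alpha-2}s)^{-1/2}$ from van der Corput), and simplifying with $s = t/h^{2\alpha}$ yields exactly $(N/h)^{1-\alpha}|t|^{-1/2}$. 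In case (ii), $\frac12<\alpha<1$, one splits the $\eta$-integral (using a further partition of unity) into the region away from the finitely many zeros of $\Phi''$ — where the above second-derivative bound applies and gives a \emph{better} estimate — and a neighborhood of each zero $\eta_*$, where $|\Phi'''|\gtrsim 1$ (a fixed constant since $\eta_*$ is a fixed point of the unit-scale function, but one must check $\eta_*$ stays bounded away from $0$ and $\pm\pi$ so no extra scaling enters); van der Corput's third-derivative version then gives $(s)^{-1/3}$, and after the rescalings this produces $(N/h)^{1-\frac{2\alpha}{3}}|t|^{-1/3}$. A short bookkeeping check confirms that $1-\frac{2\alpha}{3} \ge 1-\alpha$ for $\alpha\le 0$... rather, since $\frac{2\alpha}{3}<\alpha$ when $\alpha>0$, the resonance case loses more derivatives, consistent with the statement; and one verifies the worst contribution among the dyadic pieces and among the two regions is the claimed one.

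The main obstacle I anticipate is the precise van der Corput bookkeeping in the resonance case: one must (a) locate the zeros of $\Phi''(\eta) = \frac{d^2}{d\eta^2}(4\sin^2(\eta/2))^\alpha$ and verify they are finite in number, simple, and uniformly separated from $\eta = 0$ and from the endpoints $\eta = \pm\pi$ (where $\sin(\eta/2)$ does not vanish but $\Phi$ may still behave specially), so that no additional $N$-scaling sneaks in near those points; (b) carefully combine the near-zero Airy region with the away-from-zero region so the bounds match up across the transition, which requires the standard but somewhat delicate interpolation-type argument summing dyadic localizations (or directly choosing the partition at the scale $|\Phi''|\sim |t/h^{2\alpha}|^{-1/3}$); and (c) track the $N$ and $h$ powers through all three rescalings without error. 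A secondary subtlety is the borderline frequency regime $N\sim 1$ (top Littlewood-Paley piece), where the $\eta$-support is the whole box and one must confirm the estimate still closes — but this is exactly where the resonance zeros live and is covered by the analysis above. The non-resonance case (i) is comparatively clean since $\Phi''$ has constant sign and controlled size on $|\eta|\sim N$ for every dyadic $N\le 1$ once $0<\alpha<\frac12$, which is the "absence of lattice resonances" phenomenon advertised in Remark \ref{Strichartz estimates remarks}(ii).
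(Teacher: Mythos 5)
Your overall strategy is the same as the paper's: reduce to the kernel $K_{N,t}$, compute the second derivative of the phase $\varphi_{t,x}(\xi)=x\xi-\tfrac{4^\alpha t}{h^{2\alpha}}\sin^{2\alpha}(\tfrac{h\xi}{2})$, observe that it vanishes exactly where $\cos(h\xi)=\tfrac{1-\alpha}{\alpha}$ (impossible for $0<\alpha<\tfrac12$, a single pair of points bounded away from $0$ and $\pm\pi$ for $\tfrac12<\alpha<1$), and apply van der Corput with the second derivative off the degenerate points and the third derivative near them. The non-resonance case and the structural dichotomy are handled correctly.

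There is, however, a genuine gap in how you close the resonance case. You assert that in the region where $|\varphi''|\gtrsim |t|\,h^{2-2\alpha}N^{-(2-2\alpha)}$ the second-derivative van der Corput bound $(\tfrac{N}{h})^{1-\alpha}|t|^{-1/2}$ is ``a \emph{better} estimate'' than the claimed $(\tfrac{N}{h})^{1-\frac{2\alpha}{3}}|t|^{-1/3}$. This is false in general: the comparison $(\tfrac{N}{h})^{1-\alpha}|t|^{-1/2}\le(\tfrac{N}{h})^{1-\frac{2\alpha}{3}}|t|^{-1/3}$ holds only when $|t|\ge(\tfrac{h}{N})^{2\alpha}$, so for short times (or small $N/h$) your argument does not yield the stated bound in the non-degenerate region, nor in the subcase where $N$ is so small that the degenerate point lies outside $\supp\psi(\tfrac{h\cdot}{N})$ and only the $|t|^{-1/2}$ bound is available. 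The missing step is elementary but necessary: combine with the trivial bound $|K_{N,t}|\lesssim\tfrac{N}{h}$ (the measure of the frequency support) via $\min\{A,B\}\le A^{2/3}B^{1/3}$, which converts $(\tfrac{N}{h})^{1-\alpha}|t|^{-1/2}$ into exactly $(\tfrac{N}{h})^{1-\frac{2\alpha}{3}}|t|^{-1/3}$; this is precisely how the paper finishes, and it replaces the ``delicate interpolation-type argument summing dyadic localizations'' you anticipate. Separately, your intermediate formula $(N^{2\alpha-2}s)^{-1/2}\cdot N\cdot\tfrac1h$ in case (i) double-counts a factor of $N$ (the van der Corput amplitude factor $\int|\partial_\eta\psi(\eta/N)|\,d\eta$ is $O(1)$, not $O(N)$); your final answer $(\tfrac{N}{h})^{1-\alpha}|t|^{-1/2}$ is nonetheless the correct one.
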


\begin{proof}[Proof of Theorem \ref{Strichartz} when $0<\alpha<1$, assuming Proposition \ref{dispersive estimate for schrodinger}]
Let $\tilde{P}_N$ be the Fourier multiplier operator with symbol $\tilde{\psi}(\frac{h\xi}{N})$, where $\tilde{\psi}\in C_c^\infty$, $\tilde{\psi}\equiv 1$ on $\supp\psi$, and $\psi$ is the smooth cutoff in the definition of $P_N$. Indeed, the proof of Proposition \ref{dispersive estimate for schrodinger} does not rely on a particular choice of a frequency cut-off in $P_N$ (see below), and thus Proposition \ref{dispersive estimate for schrodinger} holds with another projection operator $\tilde{P}_N$.

Suppose that $\frac{1}{2}<\alpha<1$ so that
$$\left\|e^{-it(-\Delta_h)^\alpha} \tilde{P}_N f\right\|_{L_h^\infty} \lesssim \left(\frac{N}{h}\right)^{1-\frac{2\alpha}{3}} \frac{1}{|t|^{1/3}} \|f\|_{L_h^1}.$$
Then, it follows from the interpolation argument in Keel-Tao \cite{KT} that
$$\left\|e^{-it(-\Delta_h)^\alpha} \tilde{P}_N f\right\|_{L_t^q(\mathbb{R};L_h^r)} \lesssim \left(\frac Nh\right)^{(1-\frac{2\alpha}{3})(\frac{1}{2}-\frac{1}{r})}\|f\|_{L_h^2}= \left(\frac Nh\right)^{\frac{3-2\alpha}{q}}\|f\|_{L_h^2}$$
for all resonance admissible pairs. Inserting $P_Nf$ with $P_N=\tilde{P}_NP_N$, we get
$$\left\|e^{-it(-\Delta_h)^\alpha} P_N f\right\|_{L_t^q(\mathbb{R};L_h^r)}\lesssim \left(\frac Nh\right)^{\frac{3-2\alpha}{q}}\|P_Nf\|_{L_h^2}\lesssim\|P_N(|\nabla_h|^{\frac{3-2\alpha}{q}}f)\|_{L_h^2}.$$
Therefore, by the Littlewood-Paley inequalities (Proposition \ref{LP inequalities}), we show that
\begin{align*}
\|e^{-it(-\Delta_h)^\alpha} f\|_{L_t^q(\mathbb{R};L_h^r)}^2&\lesssim \left\|\bigg\{\sum_{N\leq 1}|e^{-it(-\Delta_h)^\alpha} \tilde{P}_N f|^2\bigg\}^{1/2}\right\|_{L_t^q(\mathbb{R};L_h^r)}^2\\
&\leq \sum_{N\leq 1}\|e^{-it(-\Delta_h)^\alpha} \tilde{P}_N f\|_{L_t^q(\mathbb{R};L_h^r)}^2\\
&\lesssim \sum_{N\leq 1}\|P_N(|\nabla_h|^{\frac{3-2\alpha}{q}}f)\|_{L_h^2}^2\lesssim \||\nabla_h|^{\frac{3-2\alpha}{q}}f\|_{L_h^2}^2.
\end{align*}
If $0<\alpha<\frac{1}{2}$, repeating but using Proposition \ref{dispersive estimate for schrodinger} $(ii)$, one can show Theorem \ref{Strichartz} $(ii)$.
\end{proof}

\begin{proof}[Proof of Proposition~\ref{dispersive estimate for schrodinger}]
	By the discrete Fourier and the discrete inverse Fourier transforms, we write
$$	\begin{aligned} 
	&\left(e^{-it(-\Delta_h)^\alpha} P_Nf\right)(x_m)\\
	&=\frac{1}{2\pi}\int_{-\frac{\pi}{h}}^{\frac{\pi}{h}} e^{-it\left\{\frac{4}{h^{2}}\sin^{2}\left(\frac{h\xi}{2}\right)\right\}^\alpha}e^{ix_m\xi}\psi(\tfrac{h\xi}{N}) \left\{h\sum_{y_m\in h\mathbb{Z}} f(y_m)e^{-iy_m\xi}\right\}d\xi \\
		&=h\sum_{y_m \in h\Z} K_{N,t}(x_m-y_m)f(y_m),
	\end{aligned}$$
	where
	$$K_{N,t}(x_m)=\frac{1}{2\pi}
	\int_{-\frac{\pi}{h}}^{\frac{\pi}{h}} e^{i\varphi_{t,x_m}(\xi)}\psi(\tfrac{h\xi}{N})d\xi$$
	and
$$	\varphi_{t,x_m}(\xi)= x_m\xi -t\left\{\tfrac{4}{h^{2}}\sin^{2}\left(\tfrac{h\xi}{2}\right)\right\}^\alpha=x_m\xi -\tfrac{4^\alpha t}{h^{2\alpha}}\sin^{2\alpha}\left(\tfrac{h\xi}{2}\right).$$
	Hence, we have
	$$\|e^{-it(-\Delta_h)^\alpha} P_N f\|_{L_h^\infty}\leq \|K_{N,t}\|_{L_h^\infty}\|f\|_{L_h^1},$$
and thus it is enough to estimate the oscillatory integral $K_{N,t}$.

We observe that the second derivative of the phase function in $K_{N,t}$ is given by
$$	\begin{aligned}
	\varphi_{t,x_m}''(\xi)&= -\frac{4^{\alpha} t}{h^{2\alpha}}2\alpha\left\{(2\alpha-1)\sin^{2\alpha-2}\left(\tfrac{h\xi}{2}\right)\cos^2\left(\tfrac{h\xi}{2}\right)-\sin^{2\alpha}\left(\tfrac{h\xi}{2}\right)\right\}\frac{h^2}{4}\\
	&= -\frac{2\alpha4^{\alpha-1} th^{2-2\alpha}}{\sin^{2-2\alpha}\left(\frac{h\xi}{2}\right)}\left\{(2\alpha-1)\cos^2\left(\tfrac{h\xi}{2}\right)-\sin^{2}\left(\tfrac{h\xi}{2}\right)\right\}\\
	&= \frac{2\alpha4^{\alpha-1} th^{2-2\alpha}}{\sin^{2-2\alpha}\left(\frac{h\xi}{2}\right)}\Big\{1-\alpha-\alpha\cos(h\xi)\Big\},
	\end{aligned}$$	
and that $\varphi_{t,x_m}''(\xi_0)=0$ if and only if $\cos(h\xi_0)=\frac{1-\alpha}{\alpha}$. If $0<\alpha<\frac{1}{2}$, then there is no $\xi_0$ such that $\varphi_{t,x_m}''(\xi_0)=0$. Moreover, the second derivative of the phase function satisfies the lower bound $|\varphi_{t,x_m}''(\xi)|\gtrsim  |t|\frac{h^{2-2\alpha}}{N^{2-2\alpha}}$ on $\supp\psi(\frac{h\cdot}{N})$, because $\frac{h|\xi|}{2\pi}\leq |\sin(\frac{h\xi}{2})|\leq\frac{h|\xi|}{2}$. Therefore, van der Corput's lemma implies that $|K_{N,t}(x_m)|\lesssim (\frac{N}{h})^{1-\alpha}|t|^{-1/2}$.
	
	If $\frac{1}{2}<\alpha<1$, then there exists a unique $\xi_0>0$ such that $\varphi_{t,x_m}''(\pm\xi_0)=0$. However, in the case $N$ is too small, $\xi_0$ is not in $\supp\psi(\frac{h\cdot}{N})$, because $\cos\theta\to 1$ as $\theta\to 0$. In this case, one can get  the bound $|K_{N,t}(x_m)|\lesssim (\frac{N}{h})^{1-\alpha}|t|^{-1/2}$ as above. Then, interpolating with the trivial bound $|K_{N,t}(x_m) |\lesssim\frac{N}{h}$, we prove that $|K_{N,t}(x_m) |\lesssim  (\frac{N}{h})^{1-\frac{2\alpha}{3}}|t|^{-1/3}$. Suppose that $N$ is not too small, in other words, $N\geq N_0$ for some dyadic number $N_0>0$. Then, differentiating the phase function once more, we observe that 
$$	|\varphi_{t,x_m}'''(\pm\xi_0)|=\left|\frac{2\alpha^24^{\alpha-1} th^{3-2\alpha}}{\sin^{2-2\alpha}\left(\frac{\pm h\xi_0}{2}\right)}\sin(\pm h\xi_0)\right|\gtrsim_{N_0} |t| h^{3-2\alpha}$$
if $\xi_0\in \supp\psi(\frac{h\cdot}{N})$. Hence, by continuity, we have that on $\supp\psi(\frac{h\cdot}{N})$, either
	$|\varphi_{t,x_m}''(\xi)|\gtrsim |t|\frac{h^{2-2\alpha}}{N^{2-2\alpha}}$ or $|\varphi_{t,x_m}'''(\xi)|\gtrsim |t| h^{3-2\alpha}$. Therefore, by Van der Corput's lemma again, we prove that
$$|K_{N,t}(x_m)|\lesssim_{N_0} \max\left\{\left(\tfrac{N}{h}\right)^{1-\alpha}|t|^{-\frac{1}{2}}, \left(\tfrac{N}{h}\right)^{1-\frac{2\alpha}{3}}|t|^{-\frac{1}{3}}\right\}.$$
Finally, interpolating with the trivial bound $|K_{N,t}(x_m) |\lesssim\frac{N}{h}$ as in the previous case, we get the desired bound $|K_{N,t}(x_m) |\lesssim  (\frac{N}{h})^{1-\frac{2\alpha}{3}}|t|^{-1/3}$. 
\end{proof}

Combining the uniform Sobolev inequality (Proposition \ref{GN and Sobolev} $(ii)$) and the uniform Strichartz estimates (Theorem \ref{Strichartz}), we deduce a time-averaged uniform $L_h^\infty$-bound on discrete linear Schr\"odinger flows.
\begin{cor}[Uniform $L_h^\infty$-bounds on discrete linear Schr\"odinger flows]\label{linear L^infty bound}
Suppose that $d=1$, $0<\alpha<1$ and $\alpha\neq\frac{1}{2}$ or that $d=2,3$ and $\alpha=1$. For sufficiently small $\delta>0$, let
\begin{equation}\label{q*}
q_*=\left\{\begin{aligned}
&\infty&&\textup{if }d=1\textup{ and }\tfrac{1}{2}<\alpha\leq 1,\\
&\tfrac{4\alpha}{1-2\alpha+\delta}&&\textup{if }d=1\textup{ and }\tfrac{1}{3}<\alpha<\tfrac{1}{2},\\
&\tfrac{4}{d-2+\delta}&&\textup{if }d=2,3\textup{ and }\alpha=1.
\end{aligned}\right.
\end{equation}
Then, 
$$\|e^{-it(-\Delta_h)^\alpha}f\|_{L_t^{q_*}(\mathbb{R}; L_h^\infty)}\lesssim \|f\|_{H_h^\alpha}.$$
\end{cor}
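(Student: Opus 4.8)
\textbf{Proof proposal for Corollary~\ref{linear L^infty bound}.}

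The plan is to interpolate the Sobolev embedding of Proposition~\ref{GN and Sobolev}~(ii) with the uniform Strichartz estimate of Theorem~\ref{Strichartz}, so that the exponent $q_*$ is chosen precisely to make the fractional derivative loss in Strichartz match the Sobolev exponent $\alpha$. First, in the simplest case $d=1$ and $\frac12<\alpha\le 1$, I would simply note that $\alpha>\frac12=\frac{d}{2}$, so the endpoint Sobolev inequality $\|g\|_{L_h^\infty}\lesssim\|g\|_{H_h^\alpha}$ holds uniformly in $h$; applying it pointwise in $t$ to $g=e^{-it(-\Delta_h)^\alpha}f$ and using that this flow is an isometry on $H_h^\alpha$ (its Fourier symbol is unimodular) gives $\|e^{-it(-\Delta_h)^\alpha}f\|_{L_t^\infty L_h^\infty}\lesssim\|f\|_{H_h^\alpha}$, which is exactly the claim with $q_*=\infty$.

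For the remaining cases ($d=1$, $\frac13<\alpha<\frac12$, and $d=2,3$, $\alpha=1$) the target $L^\infty$ in space is not directly reachable by a single Strichartz pair, so I would go through an intermediate Lebesgue exponent $r<\infty$ and pay a fractional derivative. Concretely: pick a (resonance) admissible pair $(q,r)$ with $r$ large and finite, apply Theorem~\ref{Strichartz} to get $\|e^{-it(-\Delta_h)^\alpha}f\|_{L_t^q L_h^r}\lesssim\||\nabla_h|^{s(q)}f\|_{L_h^2}$ where $s(q)=\frac{2(1-\alpha)}{q}$ in the non-resonance regime and $s(q)=\frac{3-2\alpha}{q}$ when $\alpha=1$ (so $s(q)=\frac1q$). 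Then absorb the remaining gap from $L_h^r$ up to $L_h^\infty$ using the Sobolev inequality $\|g\|_{L_h^\infty}\lesssim\|g\|_{W_h^{\sigma,r}}$ for $\sigma>\frac dr$ (Proposition~\ref{GN and Sobolev}~(ii)), applied in the spatial variable after taking the $L_t^q$ norm: this yields $\|e^{-it(-\Delta_h)^\alpha}f\|_{L_t^q L_h^\infty}\lesssim\|\langle\nabla_h\rangle^\sigma e^{-it(-\Delta_h)^\alpha}f\|_{L_t^q L_h^r}\lesssim\||\nabla_h|^{s(q)}\langle\nabla_h\rangle^\sigma f\|_{L_h^2}$, where the operators commute with the flow. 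Now I must check that the total derivative cost $s(q)+\sigma$ can be kept $\le\alpha$ while $\sigma>\frac dr$ and $(q,r)$ admissible. Using the admissibility relation to eliminate $r$ in favor of $q$ — for $\alpha=1$, $r$-admissibility gives $\frac2r=\frac d2-\frac1q$ (wait, the resonance pair gives $\frac3q+\frac dr=\frac d2$, so $\frac dr=\frac d2-\frac3q$, i.e.\ $\frac1r=\frac12-\frac3{dq}$) — one finds $\frac dr\to 0$ as $q\to\infty$ is not available since $q$ must stay finite here, so instead I optimize: as $q$ increases, $s(q)=\frac{3-2\alpha}{q}$ decreases but the minimal allowable $\sigma\downarrow\frac dr$ which, since $\frac1r\uparrow\frac12$ as $q\uparrow$, makes $\sigma$ bounded below by something approaching $\frac d2$... this forces me to take $\sigma$ slightly above the threshold and $q$ as small as possible, and a short computation shows the borderline choice is exactly $q_*=\frac{4}{d-2+\delta}$ (for $\alpha=1$) and $q_*=\frac{4\alpha}{1-2\alpha+\delta}$ (for $d=1$, $\frac13<\alpha<\frac12$), with $\delta>0$ accounting for the strict inequality $\sigma>\frac dr$. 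The restriction $\alpha>\frac13$ (resp.\ $d\le 3$) is precisely what makes $q_*\ge 2$ so that the pair is admissible.

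\textbf{Main obstacle.} The routine part is the Strichartz and Sobolev inputs; the delicate bookkeeping is verifying that the derivative budget closes, i.e.\ that one can simultaneously satisfy (a) $(q_*,r)$ admissible, (b) $\sigma>d/r$, and (c) $s(q_*)+\sigma\le\alpha$, and that the resulting $q_*$ is the stated value. I expect the main technical point to be handling the fact that one cannot send $q\to\infty$ (which would kill $s(q)$) because then the spatial exponent $r$ degenerates so that the Sobolev surplus $\sigma$ blows up past $\alpha$; the optimization therefore lands at a finite $q_*$, and one must track the $\delta$-loss carefully. A secondary point is that in the non-resonance regime $d=1$, $\frac13<\alpha<\frac12$ one should use the \emph{ordinary} admissible pairs \eqref{admissible} with loss $\frac{2(1-\alpha)}{q}$ (Theorem~\ref{Strichartz}~(ii)), not the resonance ones, and check the arithmetic gives $q_*=\frac{4\alpha}{1-2\alpha+\delta}$; here admissibility $\frac2q+\frac1r=\frac12$ combined with $\sigma>\frac1r$ and $\frac{2(1-\alpha)}{q}+\sigma\le\alpha$ yields the claim after eliminating $r$.
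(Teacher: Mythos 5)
Your argument is essentially identical to the paper's proof: the case $d=1$, $\tfrac12<\alpha\le1$ follows from the uniform Sobolev embedding $H_h^\alpha\hookrightarrow L_h^\infty$ together with unitarity of the propagator on $H_h^\alpha$, and the remaining cases are handled by composing the Sobolev inequality $\|g\|_{L_h^\infty}\lesssim\|g\|_{W_h^{\sigma,r}}$ (with $\sigma>d/r$) with the uniform Strichartz estimate and checking that the total derivative cost $s(q_*)+\sigma$ does not exceed $\alpha$ --- exactly the paper's computation, which lands on the space $W_h^{\frac{3\alpha-1-(1-\alpha)\delta}{2\alpha},\,\frac{2\alpha}{3\alpha-1-\delta}}$ in the fractional case. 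One minor correction to your closing remark: for $d=1$, $\tfrac13<\alpha<\tfrac12$ the threshold $\alpha>\tfrac13$ is what guarantees $q_*\ge 4$, so that the companion exponent $r$ determined by $\tfrac{2}{q_*}+\tfrac1r=\tfrac12$ satisfies $r\le\infty$; it is not the condition $q_*\ge2$ (that is the relevant constraint only in the case $d=2,3$, $\alpha=1$).
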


\begin{proof}
When $d=1$ and $\frac{1}{2}<\alpha<1$, the corollary immediately follows from the uniform Sobolev embedding $H_h^\alpha\hookrightarrow L_h^\infty$(Proposition \ref{GN and Sobolev} $(ii)$) and unitarity of the linear propagator on $H_h^\alpha$. If $d=1$ and $\frac{1}{3}<\alpha<\frac{1}{2}$, then we apply the Sobolev inequality and Strichartz estimates (Theorem \ref{Strichartz}) to get
\begin{align*}
\|e^{-it(-\Delta_h)^\alpha}f\|_{L_t^{\frac{4\alpha}{1-2\alpha+\delta}}(\mathbb{R}; L_h^\infty)}&\lesssim \|e^{-it(-\Delta_h)^\alpha}f\|_{L_t^{\frac{4\alpha}{1-2\alpha+\delta}}(\mathbb{R}; W_h^{\frac{3\alpha-1-(1-\alpha)\delta}{2\alpha}, \frac{2\alpha}{3\alpha-1-\delta}})}\\
&\lesssim \|f\|_{H_h^\alpha}
\end{align*}
for sufficiently small $\delta>0$. By the same way, one can show the desired inequality in the case $d=2,3$ and $\alpha=1$.
\end{proof}

\begin{rem}\label{high d restriction}
High dimensions $d\geq 4$ are not covered in this paper, because $q_*=\tfrac{4}{d-2+\delta}$ in Corollary \ref{linear L^infty bound} is required to be $\geq 2$.
\end{rem}
\section{Uniform $L_h^\infty$-bound for Nonlinear Solutions}

We consider solutions to DNLS \eqref{DNLS}. It turns out that they exist globally in time, are unique, and obey the mass and the energy conservation laws.

\begin{prop}[Global well-posedness]\label{GWP}
Suppose that $d\in\mathbb{N}$, $\alpha\in\mathbb{R}$ and $p>1$. Then, for any initial data $u_{h,0}\in L_h^2$, there exists a unique global solution $u_h(t)\in C(\mathbb{R}; L_h^2)$ to DNLS \eqref{DNLS}. Moreover, it conserves the mass
$$M_h(u_h):=\|u_h\|_{L_h^2}^2$$
and the energy
$$E_h(u_h):=\frac{1}{2}\|(-\Delta_h)^{\frac{\alpha}{2}}u_h\|_{L_h^2}^2+\frac{\lambda}{p}\|u_h\|_{L_h^p}^p.$$
\end{prop}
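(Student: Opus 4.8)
The plan is to regard DNLS \eqref{DNLS} on a fixed lattice $h\Z^d$ as an ordinary differential equation on the Banach space $L_h^2$, run a contraction‑mapping argument in Duhamel form, and then upgrade the local solution to a global one using the conserved mass. Two observations make the fixed‑point step elementary. First, by the Plancherel theorem the Fourier symbol of $(-\Delta_h)^\alpha$ equals $\big(\tfrac{4}{h^2}\sin^2(\tfrac{h\xi}{2})\big)^\alpha$, which is bounded by $(4/h^2)^\alpha$ on $\tfrac{2\pi}{h}\mathbb{T}^d$; hence $(-\Delta_h)^\alpha$ is a bounded self‑adjoint operator on $L_h^2$ and $e^{-it(-\Delta_h)^\alpha}$ is a strongly continuous unitary group on $L_h^2$. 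Second, the trivial embedding $\|g\|_{L_h^\infty}\le h^{-d/2}\|g\|_{L_h^2}$ together with the pointwise bound $\big||a|^{p-1}a-|b|^{p-1}b\big|\ls(|a|^{p-1}+|b|^{p-1})|a-b|$ yields
$$\big\||u|^{p-1}u-|v|^{p-1}v\big\|_{L_h^2}\ls_h\big(\|u\|_{L_h^2}^{p-1}+\|v\|_{L_h^2}^{p-1}\big)\|u-v\|_{L_h^2},$$
so $u\mapsto\lambda|u|^{p-1}u$ is locally Lipschitz from $L_h^2$ to $L_h^2$, with an $h$‑dependent constant.

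With these in hand I would, for $T=T\big(h,\|u_{h,0}\|_{L_h^2}\big)>0$ small, show that the Duhamel map
$$\Phi(v)(t)=e^{-it(-\Delta_h)^\alpha}u_{h,0}-i\lambda\int_0^t e^{-i(t-s)(-\Delta_h)^\alpha}\big(|v|^{p-1}v\big)(s)\,ds$$
is a contraction on the ball of radius $2\|u_{h,0}\|_{L_h^2}$ in $C([-T,T];L_h^2)$, using unitarity of the propagator and the Lipschitz estimate above. Its fixed point is the unique solution on $[-T,T]$, and patching gives a unique maximal solution $u_h\in C\big((-T_-,T_+);L_h^2\big)$ together with the usual blow‑up alternative (if $T_+<\infty$ then $\|u_h(t)\|_{L_h^2}\to\infty$ as $t\uparrow T_+$, and similarly at $-T_-$). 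Since the right‑hand side of \eqref{DNLS} is then a continuous $L_h^2$‑valued function along the solution, $u_h$ is $C^1$ into $L_h^2$ and the conservation laws may be verified directly: pairing \eqref{DNLS} with $u_h$ in $L_h^2$ and taking imaginary parts gives
$$\tfrac{d}{dt}\|u_h(t)\|_{L_h^2}^2=2\,\im\Big(h^d\!\!\sum_{x_m\in h\Z^d}\!\!\big((-\Delta_h)^\alpha u_h\big)(x_m)\overline{u_h(x_m)}+\lambda\|u_h(t)\|_{L_h^{p+1}}^{p+1}\Big)=0,$$
using self‑adjointness of $(-\Delta_h)^\alpha$ and the reality of the second summand; energy conservation follows from the analogous computation, pairing \eqref{DNLS} with $\partial_t u_h$ and taking real parts.

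Finally, mass conservation forces $\|u_h(t)\|_{L_h^2}=\|u_{h,0}\|_{L_h^2}$ on the entire maximal interval, which is incompatible with the blow‑up alternative unless $T_+=T_-=\infty$; hence $u_h\in C(\R;L_h^2)$, while uniqueness is already built into the fixed‑point construction (and can also be recovered from the Lipschitz estimate via Gronwall). I do not expect a genuine obstacle here: on a fixed lattice every operator in sight is bounded, so DNLS is an honest Banach‑space ODE whose global theory is soft, and the nonlinearity need not even be energy‑subcritical for this statement. The only point to keep in mind is that the local existence time and the Lipschitz constant degenerate as $h\to0$; this is harmless for the present proposition, in which $h$ is fixed, and producing estimates that survive the limit $h\to0$ is precisely what the remainder of the paper is devoted to.
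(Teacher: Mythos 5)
Your proposal is correct and follows essentially the same route as the paper: the authors likewise obtain local well-posedness by a contraction argument built on the trivial embedding $\|u_h\|_{L_h^\infty}\le h^{-d/2}\|u_h\|_{L_h^2}$ (with $h$-dependent constants), verify the conservation laws, and extend globally via mass conservation. Your write-up simply fills in the details (boundedness of $(-\Delta_h)^\alpha$, the blow-up alternative, the pairing computations) that the paper delegates to the reference \cite{HY}.
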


\begin{proof}
The proof follows as in \cite[Proposition]{HY}, where the case $\alpha=1$ is considered. Indeed, for fixed $h>0$, local well-posedness in $L_h^2$ (as well as the conservation laws) can be proved by the trivial inequality $\|u_h\|_{L_h^\infty}\leq h^{-d/2}\|u_h\|_{L_h^2}$ on a short time interval depending on $h>0$. The interval of existence is then extended by the mass conservation law.
\end{proof}

As observed in the previous section, discrete linear Schr\"odinger flows satisfy a time-averaged uniform $L_h^\infty$-bound (Corollary \ref{linear L^infty bound}). The purpose of this section is to show that nonlinear flows obey a similar bound at least locally in time.

\begin{prop}[Uniform $L_h^\infty$-bound for nonlinear solutions]\label{L^infty bound}
Suppose that $d$, $\alpha$, $p$ and $\lambda$ satisfy the hypotheses in Theorem \ref{main theorem}, and let $q_*$ be given by \eqref{q*}. Then, the global solution $u_h(t)$ to DNLS  \eqref{DNLS} with initial data $u_{h,0}\in H_h^\alpha$, constructed in Proposition \ref{GWP}, satisfies \begin{equation}\label{long time L^infty bound}
\|u_h\|_{L_t^{q_*}([-T,T]; L_h^\infty)}\lesssim \langle T\rangle^{1/q_*}\|u_{h,0}\|_{H_h^\alpha},\quad\forall T>0.
\end{equation}
\end{prop}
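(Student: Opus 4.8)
The plan is to run a standard Strichartz-type bootstrap (contraction/continuity argument) on the Duhamel formula for DNLS, using Corollary~\ref{linear L^infty bound} to control the linear evolution and the conservation laws (Proposition~\ref{GWP}) to propagate the $H_h^\alpha$ norm. Writing
$$u_h(t)=e^{-it(-\Delta_h)^\alpha}u_{h,0}-i\lambda\int_0^t e^{-i(t-s)(-\Delta_h)^\alpha}\bigl(|u_h|^{p-1}u_h\bigr)(s)\,ds,$$
I would first fix a short time step $\tau>0$ (to be chosen independent of $h$) and aim to show on each interval $I_k=[k\tau,(k+1)\tau]$ that $\|u_h\|_{L_t^{q_*}(I_k;L_h^\infty)}$ is controlled by a constant depending only on $\|u_{h,0}\|_{H_h^\alpha}$, then sum over the $O(T/\tau)$ intervals with the $1/q_*$ loss coming from $\|\cdot\|_{L_t^{q_*}}$ on a union of intervals (this is where the $\langle T\rangle^{1/q_*}$ factor originates).

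The key estimates are: (1) for the linear term, apply Corollary~\ref{linear L^infty bound} together with mass conservation, $\|u_{h}(k\tau)\|_{H_h^\alpha}\lesssim \|u_{h,0}\|_{H_h^\alpha}$ — here one needs the uniform-in-$h$ bound on $\|u_h(t)\|_{H_h^\alpha}$, which in the defocusing case follows directly from energy conservation (both terms in $E_h$ have good sign) and in the focusing case follows from the Gagliardo--Nirenberg inequality (Proposition~\ref{GN and Sobolev}(i)) applied to the $\|u_h\|_{L_h^p}^p$ term, using the subcriticality encoded in assumptions \eqref{assumption 1}--\eqref{assumption 2}; (2) for the Duhamel term, use the inhomogeneous (retarded) Strichartz estimate obtained from Theorem~\ref{Strichartz} via the Christ--Kiselev lemma together with the Sobolev embedding $W_h^{s,r}\hookrightarrow L_h^\infty$ exactly as in the proof of Corollary~\ref{linear L^infty bound}, to bound $\|\int_0^t e^{-i(t-s)(-\Delta_h)^\alpha}F(s)\,ds\|_{L_t^{q_*}(I_k;L_h^\infty)}$ by a suitable dual-Strichartz norm of $F=|u_h|^{p-1}u_h$; (3) a nonlinear estimate $\||u_h|^{p-1}u_h\|_{(\text{dual Strichartz})(I_k)}\lesssim \|u_h\|_{L_t^{q_*}(I_k;L_h^\infty)}^{p-1}\|u_h\|_{L_t^\infty(I_k;H_h^\alpha)}$, proved by Hölder in time (splitting the $p-1$ copies of $u_h$ into $L_h^\infty$ and the remaining factor into the Strichartz exponent at regularity $\alpha$, using $H_h^\alpha\hookrightarrow$ the relevant $L_h^r$ or $W_h^{s,r}$ space) — this is exactly the place where the admissibility conditions \eqref{assumption 1}--\eqref{assumption 2} are consumed.

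Putting these together on $I_k$ yields an inequality of the schematic form
$$X_k:=\|u_h\|_{L_t^{q_*}(I_k;L_h^\infty)}\lesssim \|u_{h,0}\|_{H_h^\alpha}+\tau^{\theta}\,X_k^{p-1}\,\|u_{h,0}\|_{H_h^\alpha}$$
for some $\theta>0$ (the time power coming from Hölder on the short interval), so for $\tau$ chosen small depending only on $\|u_{h,0}\|_{H_h^\alpha}$ (hence independent of $h$), a continuity/bootstrap argument closes and gives $X_k\lesssim \|u_{h,0}\|_{H_h^\alpha}$ uniformly in $k$ and $h$. Finally, since $[-T,T]$ is covered by $O(\langle T\rangle/\tau)$ such intervals and $\|g\|_{L_t^{q_*}([-T,T])}^{q_*}=\sum_k\|g\|_{L_t^{q_*}(I_k)}^{q_*}$, summing gives the claimed bound $\|u_h\|_{L_t^{q_*}([-T,T];L_h^\infty)}\lesssim \langle T\rangle^{1/q_*}\|u_{h,0}\|_{H_h^\alpha}$.

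The main obstacle is ensuring every constant is genuinely uniform in $h$: the linear Strichartz constants are uniform by Theorem~\ref{Strichartz}, but one must check that the Christ--Kiselev step, the Sobolev embeddings of Proposition~\ref{GN and Sobolev}, and the Littlewood--Paley machinery all carry $h$-independent constants (they do, by the results quoted from \cite{HY}), and — most delicately — that the a priori $H_h^\alpha$ bound on $u_h$ is uniform in $h$; in the focusing case this forces the Gagliardo--Nirenberg exponent check that pins down the lower endpoints $\frac{d}{d+4}$ and $\frac{1}{1+4\alpha}$ in \eqref{assumption 1}--\eqref{assumption 2}. A secondary technical point is that the smallness time $\tau$ must be selected from $\|u_{h,0}\|_{H_h^\alpha}$ alone (not from any quantity that could degenerate as $h\to0$), which is legitimate precisely because $\|u_{h,0}\|_{H_h^\alpha}\lesssim\|u_0\|_{H^\alpha}$ uniformly — a fact established in the discretization estimates of Section~5.
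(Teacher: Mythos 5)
Your proposal is correct and follows essentially the same route as the paper: a short-time contraction/bootstrap built on Corollary~\ref{linear L^infty bound}, a nonlinear estimate in $H_h^\alpha$, H\"older in time with exponent $1-\frac{p-1}{q_*}>0$, a global $H_h^\alpha$ bound from the conservation laws plus Gagliardo--Nirenberg in the focusing case, and iteration over $O(\langle T\rangle/\tau)$ intervals. The only deviations are cosmetic: the paper avoids Christ--Kiselev entirely by estimating the Duhamel term through the trivial dual norm $L_t^1(I;H_h^\alpha)$ (Minkowski plus the homogeneous bound of Corollary~\ref{linear L^infty bound}), and the spatial fractional chain rule you only sketch is supplied there by Lemma~\ref{Lem:Nonlinear estimate}, proved via a difference-quotient characterization of $\dot H_h^s$.
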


The proof is similar to that of local well-posedness of the continuum equation \eqref{NLS}, but the following nonlinear estimate is employed.

\begin{lem}[Nonlinear estimate]\label{Lem:Nonlinear estimate}
	Suppose that $p>1$ and $0\le s\le1$. Then,
	\begin{equation}\label{Ineq:Nonlinear term}
\left\||\nabla_h|^s(|u_h|^{p-1}u_h) \right\|_{L_h^2}\leq C  \| u_h \|_{L_h^\infty}^{p-1} \| |\nabla_h|^s u_h \|_{L_h^2}.
	\end{equation}
\end{lem}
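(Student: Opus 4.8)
The plan is to split the range $0\le s\le1$ into the two endpoints $s=0,1$, treated directly, and the interior $0<s<1$, treated via a Gagliardo-type description of $\dot H_h^s$ that is uniform in $h$. In every case the only input from the nonlinearity is the elementary pointwise bound for $F(z):=|z|^{p-1}z$,
\[
|F(a)-F(b)|\le C_p\big(|a|^{p-1}+|b|^{p-1}\big)|a-b|,\qquad a,b\in\mathbb{C},
\]
valid for $p>1$; it follows from $|dF(z)|\lesssim|z|^{p-1}$ together with the fundamental theorem of calculus along the segment joining $a$ and $b$. In particular $|F(u_h(x))-F(u_h(y))|\le 2C_p\|u_h\|_{L_h^\infty}^{p-1}|u_h(x)-u_h(y)|$ for all lattice points $x,y$. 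For $s=0$, \eqref{Ineq:Nonlinear term} is immediate from $h^d\sum_{x_m}|u_h(x_m)|^{2p}\le\|u_h\|_{L_h^\infty}^{2(p-1)}h^d\sum_{x_m}|u_h(x_m)|^2$.

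For $s=1$, I would invoke the norm equivalence $\||\nabla_h|g\|_{L_h^2}\sim\sum_{j=1}^d\|D_{j;h}^+g\|_{L_h^2}$ from Proposition \ref{norm equivalence}, used in both directions. Since $D_{j;h}^+F(u_h)(x)=h^{-1}\big(F(u_h(x+he_j))-F(u_h(x))\big)$, the pointwise inequality gives $|D_{j;h}^+F(u_h)(x)|\le 2C_p\|u_h\|_{L_h^\infty}^{p-1}|D_{j;h}^+u_h(x)|$ pointwise, and taking $L_h^2$-norms and summing over $j$ yields \eqref{Ineq:Nonlinear term} with $s=1$.

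For $0<s<1$ the plan rests on the equivalence, \emph{with constants independent of $h$},
\[
\||\nabla_h|^sg\|_{L_h^2}^2\;\sim_{d,s}\;h^{2d}\sum_{x\in h\mathbb{Z}^d}\sum_{y\in h\mathbb{Z}^d\setminus\{0\}}\frac{|g(x+y)-g(x)|^2}{|y|^{d+2s}};
\]
granting this, one inserts $g=F(u_h)$ and applies the pointwise bound termwise in the double sum to get \eqref{Ineq:Nonlinear term}. The main obstacle is exactly this seminorm equivalence. By Plancherel it amounts to the symbol estimate
\[
m_h(\xi):=h^d\sum_{y\in h\mathbb{Z}^d\setminus\{0\}}\frac{|e^{iy\cdot\xi}-1|^2}{|y|^{d+2s}}\;\sim_{d,s}\;|\xi|^{2s}\qquad\text{for }\xi\in\Big[-\tfrac{\pi}{h},\tfrac{\pi}{h}\Big]^d,
\]
which is a Riemann-sum comparison with the convergent integral $\int_{\mathbb{R}^d}|e^{iy\cdot\xi}-1|^2|y|^{-d-2s}\,dy=c_{d,s}|\xi|^{2s}$. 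One splits the sum at the scale $|y|\sim|\xi|^{-1}$, estimating $|e^{iy\cdot\xi}-1|$ by $|y||\xi|$ on the inner part and by $2$ on the outer part; the delicate point is that the main contribution sits at $|y|\sim|\xi|^{-1}$ when $h|\xi|\ll1$ but migrates to the nearest-neighbour scale $|y|\sim h$ when $|\xi|\sim h^{-1}$, so the constants must be tracked to be uniform in $h$. Note that $s<1$ is genuinely used: the analogous sum with weight $|y|^{-d-2}$ carries a factor $\log(h|\xi|)^{-1}$, which is why $s=1$ has to be handled separately as above; likewise a naive complex interpolation of the $s=0$ and $s=1$ bounds fails, since it would leave $\|u_h\|_{L_h^2}^{1-s}\|u_h\|_{\dot H_h^1}^s\gtrsim\||\nabla_h|^su_h\|_{L_h^2}$ on the right-hand side, i.e.\ the wrong direction.
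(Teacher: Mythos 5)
Your proposal is correct and follows essentially the same route as the paper: the Gagliardo--Nirenberg-type seminorm characterization of $\dot H_h^s$ for $0<s<1$ (reduced via Plancherel to the symbol bound $m_h(\xi)\sim|\xi|^{2s}$), combined with the pointwise Lipschitz estimate for $z\mapsto|z|^{p-1}z$ obtained from the fundamental theorem of calculus, with $s=1$ handled separately through the difference-quotient norm equivalence. If anything, your treatment of the uniformity in $h$ of the symbol equivalence (by splitting at the scale $|y|\sim|\xi|^{-1}$) is more careful than the paper's Riemann-sum limit argument.
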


\begin{proof}
For the case $s=1$, we refer \cite[Lemma~6.2]{HY}. Thus, we may only consider the fractional case $0<s<1$. We claim that if $0<s<1$, then
$$\| |\nabla_h|^s f \|_{L_h^2}^2 \sim  h^d\sum_{y_m\in h\Z^d, y_m\neq 0} \frac{\left\|f(\cdot+y_m)-f\right\|_{L_h^2}^2}{|y_m|^{d+2s}},$$
	where the implicit constants are not independent of $h>0$. Indeed, by the Plancherel theorem, we may write the right hand side as
	\begin{align*}
	&\frac{h^d}{(2\pi)^d}\sum_{y_m\in h\Z^d, y_m\neq 0} \frac{\| ( e^{-i y_m\cdot \xi} -1) \mathcal{F}_hf (\xi)\|_{L_\xi^2(\frac{2\pi}{h}\mathbb{T}^d)}^2}{|y_m|^{d+2s}}\\
	&= \frac{1}{(2\pi)^d}\int_{\frac{2\pi}{h}\mathbb{T}^d}\left\{h^d\sum_{y_m\in h\Z^d, y_m\neq 0} \frac{| e^{-i y_m\cdot \xi} -1 |^2 }{ |y_m|^{d+2s}}\right\}|\mathcal{F}_hf(\xi)|^2 d\xi.
	\end{align*}
In consideration of the Riemann integration and by changes of variables, we get
	$$\begin{aligned}
	h^d\sum_{y_m\in h\Z^d, y_m\neq 0} \frac{| e^{-i y_m\cdot \xi} -1 |^2 }{ |y_m|^{d+2s}}\underset{h\to0}\longrightarrow& \int_{\mathbb{R}^d} \frac{| e^{-i y\cdot \xi} -1 |^2 }{ |y|^{d+2s}}dy= |\xi|^s\int_{\mathbb{R}^d} \frac{| e^{-i y\cdot \frac{\xi}{|\xi|}} -1 |^2 }{ |y|^{d+2s}}dy\\
	&=|\xi|^s\int_{\mathbb{R}}\cdots\int_{\mathbb{R}} \frac{| e^{-i y_1} -1 |^2 }{ |y|^{d+2s}}dy_1\cdots dy_d\sim |\xi|^s.
	\end{aligned}$$
	Thus, the claim follows.
	
	To show the lemma, we observe that by the fundamental theorem of calculus,
	\begin{equation}\label{FTC application}
	\begin{aligned}
	|u|^{p-1}u-|v|^{p-1}v&=\int_0^1\frac{d}{ds}\left(|su+(1-s)v|^{p-1}(su+(1-s)v)\right)ds\\
	&=\frac{p+1}{2}\left\{\int_0^1\left|su+(1-s)v\right|^{p-1}ds\right\} (u-v)\\
	&\quad+\frac{p-1}{2}\left\{\int_0^1\left|su+(1-s)v\right|^{p-3}\left(su+(1-s)v\right)^2ds\right\} \overline{u-v}.
	\end{aligned}
	\end{equation}
	Hence, by the claim and \eqref{FTC application}, we prove that
	\begin{align*}
	\left\||\nabla_h|^s(|u_h|^{p-1}u_h) \right\|_{L_h^2}^2&\sim h^d\sum_{y_m\in h\Z^d, y_m\neq 0} \frac{\left\||u_h|^{p-1}u_h(\cdot+y_m)-|u_h|^{p-1}u_h\right\|_{L_h^2}^2}{|y_m|^{d+2s}}\\
	&\leq 2p\|u_h\|_{L_h^\infty}^{2(p-1)}\cdot h^d\sum_{y_m\in h\Z^d, y_m\neq 0} \frac{\left\|u_h(\cdot+y_m)-u_h\right\|_{L_h^2}^2}{|y_m|^{d+2s}}\\
	&\sim \|u_h\|_{L_h^\infty}^{2(p-1)}\||\nabla_h|^su_h\|_{L_h^2}^2.
	\end{align*}
\end{proof}

\begin{proof}[Proof of Proposition \ref{L^infty bound}]
First, we will show that the proposition holds on a sufficiently small interval. Let $I=[-\tau,\tau]$ be a sufficiently small interval to be chosen later, and define
$$\Gamma(u_h):=e^{-it(-\Delta_h)^\alpha}u_{h,0}-i\lambda\int_0^t e^{-i(t-s)(-\Delta_h)^\alpha}\left(|u_h|^{p-1}u_h\right)(s)ds$$
on the set
$$X:=\Bigg\{u_h:h\mathbb{Z}^d\to\mathbb{C}:\ \|u_h\|_{C_t(I;H_h^\alpha)}+\|u_h\|_{L_t^{q_*}(I;L_h^\infty)}\leq 2(1+c)\|u_{h,0}\|_{H_h^\alpha}\Bigg\}$$
equipped with the norm $\|\cdot\|_{C_t(I; L_h^2)}$. Here, $c>0$ denotes the uniform-in-$h$ constant in Corollary \ref{linear L^infty bound}, that is,
$$\|e^{-it(-\Delta_h)^\alpha}u_{h,0}\|_{L_t^{q_*}(\mathbb{R}; L_h^\infty)}\leq c\|u_{h,0}\|_{H_h^\alpha}.$$
Note that $X$ is a complete metric space. Indeed, if $\{v_{h,n}\}_{n=1}^\infty$ is a Cauchy sequence in $X$, then it converges to $v_h$ in $C_t(I;L_h^2)$ as $n\to \infty$. However, since $\|u_h\|_{C_t(I;H_h^\alpha)}+\|u_h\|_{L_t^{q_*}(I;L_h^\infty)}\lesssim_h \|u_h\|_{C_t(I;L_h^2)}$, we have $v_{h,n}\to v_h$ in $C_t(I;H_h^\alpha)\cap L_t^{q_*}(I;L_h^\infty)$, and so $v_h\in X$.

We claim that $\Gamma$ is contractive on $X$. Indeed, by unitarity, we have
$$\|\Gamma(u_h)\|_{C_t(I; H_h^\alpha)}\leq \|u_{h,0}\|_{H_h^\alpha}+|\lambda|\||u_h|^{p-1}u_h\|_{L_t^1(I;H_h^\alpha)}$$
and by Corollary \ref{linear L^infty bound},
$$\|\Gamma(u_h)\|_{L_t^{q_*}(I; L_h^\infty)}\leq c\|u_{h,0}\|_{H_h^\alpha}+c|\lambda|\||u_h|^{p-1}u_h\|_{L_t^1(I;H_h^\alpha)}.$$
Thus, applying the nonlinear estimate (Lemma \ref{Lem:Nonlinear estimate}) and the H\"older inequality in time, we show that if $u_h\in X$, 
\begin{align*}
&\|\Gamma(u_h)\|_{C_t(I; H_h^\alpha)}+\|\Gamma(u_h)\|_{L_t^{q_*}(I; L_h^\infty)}\\
&\leq (1+c)\|u_{h,0}\|_{H_h^\alpha}+(1+c)|\lambda|(2\tau)^{1-\frac{p-1}{q_*}}C\|u_h\|_{L_t^{q_*}(I;L_h^\infty)}^{p-1}\|u_h\|_{C_t(I;H_h^\alpha)}\\
&\leq (1+c)\|u_{h,0}\|_{H_h^\alpha}+(1+c)|\lambda|(2\tau)^{1-\frac{p-1}{q_*}}C\left( 2(1+c)\|u_{h,0}\|_{H_h^\alpha}\right)^p\\
&=(1+c)\|u_{h,0}\|_{H_h^\alpha}\left\{1+(1+c)|\lambda|(2\tau)^{1-\frac{p-1}{q_*}}C\left( 2(1+c)\|u_{h,0}\|_{H_h^\alpha}\right)^{p-1}\right\}.
\end{align*}
On the other hand, by \eqref{FTC application}, we obtain
\begin{align*}
\|\Gamma(u_h)-\Gamma(v_h)\|_{C_t(I; L_h^2)}&=|\lambda|\left\|\int_0^t e^{-i(t-s)(-\Delta_h)^\alpha}\left(|u_h|^{p-1}u_h-|v_h|^{p-1}v_h\right)(s)ds\right\|_{C_t(I; L_h^2)}\\
&\leq |\lambda|\||u_h|^{p-1}u_h- |v_h|^{p-1}v_h\|_{L_t^1(I;L_h^2)}\\
&\leq |\lambda|(2\tau)^{1-\frac{p-1}{q_*}}p\left(\|u_h\|_{L_t^{q_*}(I;L_h^\infty)}+\|v_h\|_{L_t^{q_*}(I;L_h^\infty)}\right)^{p-1}\\
&\quad\cdot\|u_h-v_h\|_{C_t(I;L_h^2)}\\
&\leq |\lambda|(2\tau)^{1-\frac{p-1}{q_*}}\left(2c\|u_{h,0}\|_{H_h^\alpha}\right)^{p-1}\|u_h-v_h\|_{C_t(I;L_h^2)},
\end{align*}
provided that $u_h, v_h\in X$. Then, since $q_*$ is selected so that $1-\frac{p-1}{q_*}>0$ (see \eqref{q*}), one can make $\Gamma$ contractive on $X$ taking sufficiently small $\tau>0$ depending on $\|u_{h,0}\|_{H_h^\alpha}$ but not on $h>0$.

By the claim, it follows from the Banach fixed point theorem that $\Gamma$ has a fixed point in $X$, which is, by uniqueness (Proposition \ref{GWP}), the solution $u_h$ to DNLS \eqref{DNLS}. Therefore, we conclude that
\begin{equation}\label{short time L^infty bound}
\|u_h\|_{L_t^q([-\tau,\tau]; L_h^\infty)}\lesssim \|u_{h,0}\|_{H_h^\alpha}.
\end{equation}

In order to extend the time interval arbitrarily, we show that $\|u_h(t)\|_{H_h^\alpha}$ is bounded globally-in-time. Indeed, by the mass conservation law and the norm equivalence (Proposition \ref{norm equivalence}), it is enough to show that $\|(-\Delta_h)^{\frac{\alpha}{2}}u_h(t)\|_{L_h^2}$ is bounded globally in time. When $\lambda>0$, the energy conservation law implies that $\|(-\Delta_h)^{\frac{\alpha}{2}}u_h(t)\|_{L_h^2}^2\leq 2E_h(u_h(t))=2E_h(u_{h,0})$ for all $t$. When $\lambda<0$, we use both the mass and the energy conservation laws as well as the uniform Gagliardo-Nirenberg inequality (Proposition \ref{GN and Sobolev} (i)) to get
\begin{equation}\label{conservation laws and GN applications}
\begin{aligned}
\frac{1}{2}\|(-\Delta_h)^{\frac{\alpha}{2}}u_h(t)\|_{L_h^2}^2&=E_h(u_h(t))+\frac{\lambda}{p+1}\|u_{h}(t)\|_{L_h^{p+1}}^{p+1}\\
&\leq E_h(u_h(t))+C\|u_h(t)\|_{L_h^2}^{p+1-\frac{d(p-1)}{2\alpha}}\|(-\Delta_h)^{\frac{\alpha}{2}}u_h(t)\|_{L_h^2}^{\frac{d(p-1)}{2\alpha}}\\
&\leq E_h(u_{h,0})+CM_h(u_{h,0})^{\frac{p+1}{2}-\frac{d(p-1)}{4\alpha}}\|(-\Delta_h)^{\frac{\alpha}{2}}u_h(t)\|_{L_h^2}^{\frac{d(p-1)}{2\alpha}}.
\end{aligned}
\end{equation}
By the assumption, we have $\frac{d(p-1)}{2\alpha}<2$. Thus, we can use Young's inequality to bound $\|(-\Delta_h)^{\frac{\alpha}{2}}u_h(t)\|_{L_h^2}^2$ in terms of the mass $M_h(u_{h,0})$ and the energy $E_h(u_{h,0})$.

Because $\|u_h(t)\|_{H_h^\alpha}$ is bounded uniformly in time, \eqref{short time L^infty bound} can be iterated with new initial data $u(\tau), u(2\tau), ...$ with the bounds \eqref{short time L^infty bound} on the intervals $[\tau,2\tau]$, $[2\tau, 3\tau]$, ... to cover an arbitrarily long time interval $[-T,T]$. Therefore, summing up, we obtain \eqref{long time L^infty bound}.
\end{proof}


\section{Some Properties of Discretization and Linear Interpolation}
In the previous two sections, we have discussed about time-averaged uniform $L_h^\infty$-bounds for solutions to the discrete linear and nonlinear Schr\"odinger equations. We now turn to our attentions to the continuum limit problems.

We recall that a locally integrable function $f:\mathbb{R}^d\rightarrow\C$ is discretized by
\begin{equation}\label{def: discretization}
f_h(x_m):=\frac{1}{h^d}\int_{x_m+[0,h)^d}f(x)dx,\quad \forall x_m\in h\mathbb{Z}^d.
\end{equation}
On the other hand, a function $f:h\mathbb{Z}^d\rightarrow \C$ on a lattice domain becomes continuous by linear interpolation
\begin{equation}\label{def:  linear interpolation}
(p_h f)(x):= f(x_m)+(D_h^+f)(x_m)\cdot(x-x_m),\quad\forall x\in x_m+[0,h)^d,
\end{equation}
where $D_{h}^+=(D_{h;1}^+,\cdots, D_{h;d}^+)$ is the discrete gradient operator, and
$$(D_{h;j}^+ f)(x_m):=\frac{f(x_m+he_j)-f(x_m)}{h}$$
is the discrete $j$-th partial derivative. In this section, we collect some important properties of discretization and linear interpolation, which will be conveniently used in the proof of the main theorem.

We begin with boundedness of discretization and linear interpolation (see \cite{KLS}).
\begin{lem}[Boundedness of discretization]\label{discretization inequality} 
If $f\in H^\alpha(\mathbb{R}^d)$ with $0\leq \alpha\leq 1$, then its discretization $f_h$ satisfies $\| f_h\|_{\dot{H}_h^\alpha(h\mathbb{Z}^d)}\lesssim \| f\|_{\dot{H}^\alpha(\mathbb{R}^d)}$.
\end{lem}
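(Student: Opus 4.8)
The plan is to pass to the Fourier side in both domains and compare the relevant multipliers. First I would observe that the discretization operator $f\mapsto f_h$ is, up to normalization, a convolution on $\mathbb{R}^d$ followed by restriction to the lattice: writing $\chi=\mathds{1}_{[0,h)^d}$, we have $f_h(x_m)=\frac{1}{h^d}(f*\tilde\chi)(x_m)$ where $\tilde\chi(y)=\chi(-y)=\mathds{1}_{(-h,0]^d}(y)$. Restricting a function on $\mathbb{R}^d$ to $h\mathbb{Z}^d$ periodizes its Fourier transform over $\frac{2\pi}{h}\mathbb{Z}^d$, so
$$(\mathcal{F}_h f_h)(\xi)=\sum_{k\in\frac{2\pi}{h}\mathbb{Z}^d}\frac{1}{h^d}\widehat{f}(\xi+k)\,\overline{\widehat{\chi}(\xi+k)},\qquad \xi\in\tfrac{2\pi}{h}\mathbb{T}^d,$$
where $\widehat{\chi}(\zeta)=\prod_{j=1}^d\frac{1-e^{-ih\zeta_j}}{i\zeta_j}$ and hence $\frac{1}{h^{2d}}|\widehat{\chi}(\zeta)|^2=\prod_{j=1}^d\Big(\frac{\sin(h\zeta_j/2)}{h\zeta_j/2}\Big)^2=:m_h(\zeta)$, a nonnegative function bounded by $1$ that decays like $|\zeta_j|^{-2}$ in each variable.

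Next I would estimate the $\dot H_h^\alpha$-norm via Plancherel on the torus:
$$\|f_h\|_{\dot H_h^\alpha}^2=\frac{1}{(2\pi)^d}\int_{\frac{2\pi}{h}\mathbb{T}^d}|\xi|^{2\alpha}\Big|\sum_{k}\frac{1}{h^d}\widehat{f}(\xi+k)\overline{\widehat{\chi}(\xi+k)}\Big|^2\,d\xi.$$
Using Cauchy–Schwarz in $k$ against the summable weight $m_h(\xi+k)$ (here one needs $d+$ something $<2d$... more precisely $\sum_k m_h(\xi+k)$ is bounded uniformly in $\xi$ and $h$, since each factor is summable), the inner square is bounded by $\big(\sum_k m_h(\xi+k)\big)\big(\sum_k m_h(\xi+k)|\widehat f(\xi+k)|^2\big)\lesssim \sum_k m_h(\xi+k)|\widehat f(\xi+k)|^2$. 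Plugging this in and using that $|\xi|\le|\xi+k|$ up to a harmless constant is \emph{false} in general — so instead I would split: on the diagonal term $k=0$ one has $|\xi|^{2\alpha}m_h(\xi)|\widehat f(\xi)|^2\le |\xi|^{2\alpha}|\widehat f(\xi)|^2$, which integrates to $\lesssim\|f\|_{\dot H^\alpha}^2$; for $k\neq 0$ one uses $|\xi|\le \frac{\pi\sqrt d}{h}$ while $|\xi+k|\gtrsim \frac{1}{h}$, so $|\xi|^{2\alpha}\lesssim |\xi+k|^{2\alpha}$ when $k\neq0$ and $\xi\in\frac{2\pi}{h}\mathbb T^d$. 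Hence in all cases $|\xi|^{2\alpha}m_h(\xi+k)\lesssim |\xi+k|^{2\alpha}m_h(\xi+k)$, and after unfolding the periodization the full sum reassembles to $\frac{1}{(2\pi)^d}\int_{\mathbb{R}^d}|\zeta|^{2\alpha}m_h(\zeta)|\widehat f(\zeta)|^2\,d\zeta\le \|f\|_{\dot H^\alpha}^2$, using $0\le m_h\le 1$.

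The main obstacle is the interchange of the $|\xi|^{2\alpha}$ weight with the periodization sum: the low-frequency cell $\xi$ near $0$ carries a tiny weight $|\xi|^{2\alpha}$ but the shifted copies $\widehat f(\xi+k)$ sit at high frequency where $|\xi+k|^{2\alpha}$ is large, so one cannot naively bound $|\xi|^{2\alpha}$ by $|\xi+k|^{2\alpha}$ for the $k=0$ term and must treat $k=0$ and $k\neq0$ separately as above — this is exactly where the restriction $\alpha\le 1$ (equivalently $2\alpha<2d$ so that $m_h(\xi+k)|\xi+k|^{2\alpha}$ is still summable in $k$) enters. Everything else is Plancherel, Cauchy–Schwarz, and the elementary bound $\prod_j\big(\frac{\sin\theta_j}{\theta_j}\big)^2\le 1$. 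I would conclude by remarking that the constant is uniform in $h\in(0,1]$ since $m_h\le 1$ pointwise.
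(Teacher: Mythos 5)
Your proof is correct in substance but takes a genuinely different route from the paper. The paper works entirely in physical space: it proves the cases $\alpha=0$ (Cauchy--Schwarz on the cell average) and $\alpha=1$ (writing $D_{h;j}^+f_h$ as an average of difference quotients of $f$ and applying the fundamental theorem of calculus), and then invokes complex interpolation together with the norm equivalence $\|\cdot\|_{\dot W_h^{1,2}}\sim\sum_j\|D_{j;h}^+\cdot\|_{L_h^2}$. Your Fourier-side argument via periodization of $\widehat{f}\,\overline{\widehat{\chi}}/h^d$ handles all $\alpha\in[0,1]$ in one stroke, avoids interpolation, and in fact makes transparent that the estimate holds for every $\alpha\ge 0$ (the only place $\alpha$ enters is the comparison $|\xi|\le\sqrt{d}\,|\xi+k|$ for $\xi\in\frac{2\pi}{h}\mathbb{T}^d$ and all $k\in\frac{2\pi}{h}\mathbb{Z}^d$, which holds with no upper restriction on $\alpha$); so your closing remark that $\alpha\le1$ is needed for summability of $m_h(\xi+k)|\xi+k|^{2\alpha}$ is a red herring. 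One step should be corrected: since $\bigl|\frac{1}{h^d}\widehat{f}(\xi+k)\overline{\widehat{\chi}(\xi+k)}\bigr|=m_h(\xi+k)^{1/2}|\widehat f(\xi+k)|$, Cauchy--Schwarz with the weight $m_h(\xi+k)$ yields
$$\Big|\sum_k \tfrac{1}{h^d}\widehat{f}(\xi+k)\overline{\widehat{\chi}(\xi+k)}\Big|^2\le\Big(\sum_k m_h(\xi+k)\Big)\Big(\sum_k|\widehat f(\xi+k)|^2\Big)=\sum_k|\widehat f(\xi+k)|^2,$$
not the bound $\bigl(\sum_k m_h(\xi+k)\bigr)\bigl(\sum_k m_h(\xi+k)|\widehat f(\xi+k)|^2\bigr)$ you wrote (that would require $|a_k|=m_h|\widehat f|$ rather than $m_h^{1/2}|\widehat f|$). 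This is harmless: multiplying by $|\xi|^{2\alpha}\lesssim_d|\xi+k|^{2\alpha}$, integrating over the fundamental cell and unfolding the periodization still gives $\int_{\mathbb{R}^d}|\zeta|^{2\alpha}|\widehat f(\zeta)|^2\,d\zeta$, which is the desired conclusion. You should also say a word about why the Poisson-summation identity for $\mathcal F_h f_h$ extends from Schwartz $f$ to $f\in H^\alpha$, but this is routine.
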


\begin{proof}
By (complex) interpolation, it suffices to show the lemma with $\alpha=0$ and $\alpha=1$. Indeed, by the definition \eqref{def: discretization} and the Cauchy-Schwarz inequality, we get the bound,
\begin{align*}
\| f_h \|_{L_h^2(h\mathbb{Z}^d)}^2 &=h^d\sum_{x_m\in h\mathbb{Z}^d}|f_h(x_m)|^2= h^d \sum_{x_m\in h\Z^d} \left| \frac{1}{h^d} \int_{x_m+[0,h)^d} f(x) dx \right|^2\\
&\le \sum_{x_m\in h\Z^d}  \int_{x_m+[0,h)^d} |f(x)|^2 dx=\|f\|_{L^2(\mathbb{R}^d)}^2.
\end{align*}
Similarly when $\alpha=1$, we write
\begin{align*}
\|D_{h;j}f_h\|_{L_h^2(h\mathbb{Z}^d)}^2
&= h^d\sum_{x_m\in h\Z^d}\left|\frac{f_h(x_m+h e_j)-f_h(x_m)}{h}\right|^2\\ 
&=  h^d\sum_{x_m\in h\Z^d}\left| \frac{1}{h^{d+1}}\int_{x_m+[0,h)^d}f(x+he_j)-f(x)dx \right|^2 \\
&\le \sum_{x_m\in h\Z^d}\int_{x_m+[0,h)^d}\left|\frac{f(x+he_j)-f(x)}{h}\right|^2dx.
\end{align*}
Then, it follows from the fundamental theorem of calculus
$$\frac{f(x+he_j)-f(x)}{h}=\frac{1}{h}\int_0^1\frac{d}{ds}\left(f(x+hse_j)\right)ds=\int_0^1 (\nabla_{x_j} f)(x+hse_j) ds$$
that
\begin{align*}
\|D_{h;j}^+f_h\|_{L_h^2(h\mathbb{Z}^d)}^2
&\le \sum_{x_m\in h\Z^d}\int_{x_m+[0,h)^d} \int_0^1\left|(\nabla_{x_j} f)(x+hse_j)\right|^2dsdx \\
&=\int_{\mathbb{R}^d} \int_0^1\left|(\nabla_{x_j} f)(x+hse_j)\right|^2dsdx=\|\nabla_{x_j} f\|_{L^2(\mathbb{R}^d)}^2.
\end{align*}
Thus, summing in $j$, we prove the inequality with $\alpha=1$.
\end{proof}

\begin{lem}[Boundedness of linear interpolation]\label{linear interpolation inequality}
If $0\le \alpha\le 1$, then $\| p_h f_h \|_{\dot{H}^\alpha(\mathbb{R}^d)}\lesssim \| f_h \|_{\dot{H}_h^\alpha(h\mathbb{Z}^d)}$.
\end{lem}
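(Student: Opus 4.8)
The plan is to follow the template of Lemma~\ref{discretization inequality}: the operator $p_h$ is linear, and since complex interpolation gives $[L^2(\mathbb R^d),\dot H^1(\mathbb R^d)]_\alpha=\dot H^\alpha(\mathbb R^d)$ and $[L_h^2(h\mathbb Z^d),\dot H_h^1(h\mathbb Z^d)]_\alpha=\dot H_h^\alpha(h\mathbb Z^d)$, it suffices to prove the two endpoint bounds $\|p_hf_h\|_{L^2(\mathbb R^d)}\lesssim\|f_h\|_{L_h^2}$ and $\|p_hf_h\|_{\dot H^1(\mathbb R^d)}\lesssim\|f_h\|_{\dot H_h^1}$.

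For the $L^2$ endpoint I would argue cube by cube. On $Q_m:=x_m+[0,h)^d$ one has $|(x-x_m)_j|<h$, so the defining formula \eqref{p_h} yields the pointwise bound
\[
|(p_hf_h)(x)|\le|f_h(x_m)|+\sum_{j=1}^d\bigl|f_h(x_m+he_j)-f_h(x_m)\bigr|\lesssim_d|f_h(x_m)|+\sum_{j=1}^d|f_h(x_m+he_j)|
\]
for $x\in Q_m$. Squaring, integrating over $Q_m$ (which has volume $h^d$), and summing over $m$, the translated sums $h^d\sum_m|f_h(x_m+he_j)|^2$ each equal $\|f_h\|_{L_h^2}^2$ because $m\mapsto m+e_j$ permutes $\mathbb Z^d$, so the whole expression is $\lesssim\|f_h\|_{L_h^2}^2$.

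For the $\dot H^1$ endpoint I would compute the gradient directly: on the interior of each $Q_m$ the interpolant is affine with $\partial_k(p_hf_h)=(D_{h;k}^+f_h)(x_m)$ constant, hence $\int_{Q_m}|\nabla(p_hf_h)|^2=h^d\sum_{k=1}^d|(D_{h;k}^+f_h)(x_m)|^2$. Summing over $m$ and then bounding the $\ell^2$ sum over the finitely many directions $k$ by the corresponding $\ell^1$ sum,
\[
\|\nabla(p_hf_h)\|_{L^2(\mathbb R^d)}^2=\sum_{k=1}^d\|D_{h;k}^+f_h\|_{L_h^2}^2\le\Bigl(\sum_{k=1}^d\|D_{h;k}^+f_h\|_{L_h^2}\Bigr)^2\sim\|f_h\|_{\dot H_h^1}^2
\]
by the norm equivalence of Proposition~\ref{norm equivalence}.

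The delicate point is the $\dot H^1$ endpoint when $d\ge2$: there the piecewise-affine interpolant need not be globally continuous — across a face the two affine pieces agree only on the lower-dimensional skeleton, their mismatch being a mixed second-order difference of $f_h$ weighted by a coordinate tangent to the face — so one must verify that these interfacial terms contribute no singular part to the distributional gradient beyond the piecewise-classical one used above. For $d=1$ (the only case in Theorem~\ref{main theorem} with $\alpha<1$) this does not arise, since $p_hf_h$ is then genuinely Lipschitz. For $0<\alpha<1$ one can also bypass the $\alpha=1$ endpoint altogether by estimating the Gagliardo seminorm $\iint_{\mathbb R^d\times\mathbb R^d}|(p_hf_h)(x)-(p_hf_h)(y)|^2|x-y|^{-d-2\alpha}\,dx\,dy$ directly, reducing the bound to a pointwise control of $|(p_hf_h)(x)-(p_hf_h)(y)|$ by finitely many nearby lattice differences of $f_h$ — an estimate insensitive to continuity of $p_hf_h$.
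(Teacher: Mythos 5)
Your two endpoint computations are exactly the paper's proof: the crude cube-by-cube pointwise bound for the $L^2$ case, the identification of $\nabla(p_hf_h)$ with the constant vector $\big((D_{h;k}^+f_h)(x_m)\big)_{k=1}^d$ on the interior of each cube for the $\dot H^1$ case, and complex interpolation in between. So at the level of what is written down, you and the paper agree.

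The ``delicate point'' you flag, however, cannot be verified away, and it is in fact a gap shared by the paper's own argument. For $d\ge 2$ the interpolant \eqref{p_h} genuinely jumps across cube faces: in $d=2$, across the face $\{x_1=(x_m)_1+h\}$ the two affine pieces differ by $h^{-1}\big[\big(f_h(x_m{+}he_1{+}he_2)-f_h(x_m{+}he_1)\big)-\big(f_h(x_m{+}he_2)-f_h(x_m)\big)\big](x-x_m)_2$, which is nonzero off the skeleton. A piecewise affine function with a nontrivial jump across a hypersurface has a distributional gradient containing a surface measure, hence does not belong to $H^1(\mathbb{R}^d)$ at all; the lemma with $\alpha=1$, $d\ge2$ is then false for the operator \eqref{p_h} as defined, and this case is actually needed (Proposition \ref{linear approx} and the estimate of $II$ in Section 6 use it for $d=2,3$, $\alpha=1$). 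The paper's proof silently computes only the absolutely continuous part of $D(p_hf_h)$. The standard repair is to use the continuous piecewise multilinear ($Q_1$) interpolant $\sum_{\epsilon\in\{0,1\}^d}f_h(x_m+h\epsilon)\prod_{j}\phi_{\epsilon_j}\big((x-x_m)_j/h\big)$ with $\phi_0(t)=1-t$, $\phi_1(t)=t$, which is globally continuous and whose gradient is still controlled cube-by-cube by first differences of $f_h$, so both of your endpoint estimates survive; alternatively one can argue on the Fourier side via Lemma \ref{p_h symbol}, where the failure for \eqref{p_h} shows up as insufficient decay of $\mathcal{P}_h(\xi)$ in the dual lattice sum. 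Your Gagliardo-seminorm bypass is sound but only helps where no problem exists: the fractional cases of Theorem \ref{main theorem} are all $d=1$, where $p_hf_h$ is genuinely continuous piecewise affine and both your proof and the paper's are complete. In short: your proof is the paper's proof, and the caveat you raise is a real defect of both that merits the fix above rather than a verification.
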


\begin{proof}
By (complex) interpolation again, it suffices to consider the endpoint cases $\alpha=0$ and $\alpha=1$. For $\alpha=0$, we write
\begin{align*}
\|p_h f_h\|_{L^2(\mathbb{R}^d)}^2&=\sum_{x_m\in h\mathbb{Z}^d}\int_{x_m+[0,h)^d} \Big|f_h(x_m)+\sum_{j=1}^d\frac{f_h(x_m+he_j)-f_h(x_m)}{h}(x-x_m)_j\Big|^2 dx\\
&\lesssim h^d\sum_{x_m\in h\mathbb{Z}^d}\Big\{\left|f_h(x_m)\right|^2+\sum_{j=1}^d\left|f_h(x_m+he_j)\right|^2\Big\}=(d+1)\|f_h\|_{L_h^2(h\mathbb{Z}^d)}.
\end{align*}
For $\alpha=1$, we observe that
$$\|\nabla_{x_j}(p_hf_h)\|_{L^2(x_m+[0,h)^d)}^2=\|(D_{h;j}^+f_h)(x_m)\|_{L^2(x_m+[0,h)^d)}^2=h^d\left|(D_{h;j}^+f_h)(x_m)\right|^2.$$
Thus, summing in $x_m$ and $j$, we prove the desired inequality.
\end{proof}

Next, we show that linear interpolation is almost a reverse action to discretization up to small error. Moreover, the error can be precisely estimated in a lower regularity norm.
\begin{prop}[Linear interpolation vs. discretization]\label{Prop:convergence rate of phgh}
Suppose that $f\in H^\alpha(\mathbb{R}^d)$ with $0\leq \alpha\leq1$, and let $f_h: h\mathbb{Z}^d\to\mathbb{C}$ be its discretization (see \eqref{discretization}). Then,
$$\|p_hf_h-f\|_{L^2(\R^d)} \lesssim h^\alpha\|f\|_{H^\alpha(\R^d)}.$$
\end{prop}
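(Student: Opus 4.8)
The plan is to reduce the statement to the two endpoint cases $\alpha=0$ and $\alpha=1$ by complex interpolation, just as in Lemmas \ref{discretization inequality} and \ref{linear interpolation inequality}: since $f\mapsto p_hf_h-f$ is linear, it suffices to establish $\|p_hf_h-f\|_{L^2}\lesssim\|f\|_{L^2}$ (which is immediate from Lemmas \ref{discretization inequality} and \ref{linear interpolation inequality} with $\alpha=0$, giving the trivial bound with no gain in $h$) together with $\|p_hf_h-f\|_{L^2}\lesssim h\|f\|_{\dot H^1}$, and then interpolate to get the intermediate power $h^\alpha$. So the real content is the single estimate at $\alpha=1$.

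For the $\alpha=1$ estimate I would work cell by cell over the cubes $Q_m:=x_m+[0,h)^d$. On each $Q_m$, write
\[
(p_hf_h-f)(x)=\bigl(f_h(x_m)-f(x)\bigr)+\sum_{j=1}^d (D_{h;j}^+f_h)(x_m)(x-x_m)_j .
\]
For the first term, $f_h(x_m)$ is the average of $f$ over $Q_m$, so $f_h(x_m)-f(x)$ is $f$ minus its local average; by a Poincar\'e-type inequality on the cube of side $h$ one has $\|f_h(x_m)-f\|_{L^2(Q_m)}\lesssim h\|\nabla f\|_{L^2(Q_m)}$ — concretely, write $f(x)-f_h(x_m)=\frac1{h^d}\int_{Q_m}(f(x)-f(y))\,dy$ and use the fundamental theorem of calculus along the segment from $y$ to $x$ inside $Q_m$, followed by Cauchy--Schwarz. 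For the second term, $(D_{h;j}^+f_h)(x_m)=\frac1{h^{d+1}}\int_{Q_m}\bigl(f(z+he_j)-f(z)\bigr)dz$, and as in the proof of Lemma \ref{discretization inequality} the fundamental theorem of calculus gives $|(D_{h;j}^+f_h)(x_m)|^2\lesssim \frac1{h^{d}}\int_{Q_m}\int_0^1|\nabla_{x_j}f(z+hse_j)|^2\,ds\,dz$; multiplying by $|(x-x_m)_j|^2\le h^2$ and integrating over $Q_m$ yields a contribution $\lesssim h^2\int_{\widetilde Q_m}|\nabla f|^2$, where $\widetilde Q_m$ is $Q_m$ slightly enlarged in the $e_j$ direction. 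Summing over $m$ and over $j$, and noting each enlarged cube is covered boundedly many times, gives $\|p_hf_h-f\|_{L^2(\mathbb R^d)}^2\lesssim h^2\|\nabla f\|_{L^2(\mathbb R^d)}^2$, which is the claim at $\alpha=1$.

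Finally, interpolation: the operators $T_h: f\mapsto p_hf_h - f$ are bounded from $L^2$ to $L^2$ with norm $O(1)$ and from $\dot H^1$ to $L^2$ with norm $O(h)$; viewing $\dot H^1$ and $L^2$ as endpoints of the scale $\dot H^\alpha$ (or equivalently applying the estimate to $\langle\nabla\rangle^{-\alpha}$-transformed data and using that $(-\Delta)^{\alpha/2}$ commutes with everything in the Fourier picture), complex interpolation yields $\|T_h f\|_{L^2}\lesssim h^\alpha\|f\|_{\dot H^\alpha}$, hence the stated bound with $\|f\|_{H^\alpha}$ on the right. The main obstacle, such as it is, is purely bookkeeping: tracking which enlarged cubes the FTC argument spills into and checking that the overlap is uniformly bounded so that the cell-wise estimates sum cleanly; the Poincar\'e-on-a-small-cube step and the interpolation step are routine. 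One should also double-check that the homogeneous norm $\dot H^\alpha$ is the right object here (it is, since $\|f\|_{L^2}$ already appears from the $\alpha=0$ endpoint), and that no constant secretly depends on $h$.
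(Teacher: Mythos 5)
Your proof is correct, and the core of it --- the cell-by-cell $\alpha=1$ estimate splitting $p_hf_h-f$ on each cube $Q_m$ into the deviation of $f$ from its cell average (Poincar\'e/FTC plus Cauchy--Schwarz) and the discrete-gradient correction term (FTC as in Lemma \ref{discretization inequality}, with bounded overlap of the enlarged cubes) --- is essentially the same computation the paper performs. Where you genuinely diverge is in passing from the endpoint to fractional $\alpha$: you interpolate the linear operator $T_h\colon f\mapsto p_hf_h-f$ between the trivial bound $L^2\to L^2$ with norm $O(1)$ and the bound $\dot H^1\to L^2$ with norm $O(h)$, using $[L^2,\dot H^1]_\alpha=\dot H^\alpha$, whereas the paper instead splits $f=f_{low}+f_{high}$ at frequency $|\xi|\sim h^{-1}$, applies the $\alpha=1$ bound to $f_{low}$ (where $h\|f_{low}\|_{H^1}\lesssim h^\alpha\|f\|_{H^\alpha}$) and the crude triangle inequality plus Lemmas \ref{discretization inequality} and \ref{linear interpolation inequality} to $f_{high}$ (where $\|f_{high}\|_{L^2}\lesssim h^\alpha\|f\|_{H^\alpha}$). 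Both routes are valid and yield the same constant behavior; your interpolation argument is shorter and parallels how the paper itself handles Lemmas \ref{discretization inequality} and \ref{linear interpolation inequality}, while the paper's low/high decomposition is more elementary, makes the source of the factor $h^\alpha$ completely transparent on each frequency block, and sidesteps the (standard but not entirely free) identification of the complex interpolation space of the couple $(L^2,\dot H^1)$ with $\dot H^\alpha$. Your closing remarks --- that the homogeneous norm is the right endpoint object and that one must check no constant depends on $h$ --- are exactly the right points to verify, and both check out.
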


\begin{proof}
We claim that the proposition with $\alpha=1$, that is,
\begin{equation}\label{Prop:convergence rate of phgh, alpha=1}
\|p_hf_h-f\|_{L^2(\R^d)} \lesssim h\|f\|_{H^1(\R^d)},
\end{equation}
holds for smooth functions. Indeed, 
\begin{equation}\label{dis vs lin proof}
\|p_hf_h-f\|_{L^2(\R^d)}^2=\sum_{x_m\in h\mathbb{Z}^d} \|p_hf_h(x)-f(x)\|_{L_x^2(x_m+[0,h)^d)}^2.
\end{equation}
For each term in the sum, we observe from the definitions \eqref{def: discretization} and \eqref{def: linear interpolation} that if $x\in x_m+[0,h)^d$, then
\begin{align*}
&\left|(p_hf_h)(x)-f(x)\right|\\
&=\left|\Big\{f_h(x_m)+(D_h^+f_h)(x_m)\cdot(x-x_m)\Big\}-f(x)\right|\\
&=\Big|\frac{1}{h^{d}}\int_{x_m+[0,h)^d}\Big\{\left(f(y)-f(x)\right)+\sum_{j=1}^d\left(f(y+he_j)-f(y)\right)\frac{(x-x_m)_j}{h}\Big\} dy\Big|\\
&\lesssim \frac{1}{h^{\frac{d}{2}}}\Big\{\int_{|x-y|\leq \sqrt{d} h}|f(x)-f(y)|^2+\sum_{j=1}^d|f(y+he_j)-f(y)|^2 dy\Big\}^{\frac{1}{2}},
\end{align*}
where the Cauchy-Schwarz inequality is used in the last step. Hence, inserting this bound in \eqref{dis vs lin proof}, we get
$$\|p_hf_h-f\|_{L^2(\R^d)}^2\lesssim \frac{1}{h^d}\int_{\mathbb{R}^d}\int_{|x-y|\leq\sqrt{d} h}|f(x)-f(y)|^2+\sum_{j=1}^d|f(y+he_j)-f(y)|^2dydx.$$
Then, applying the fundamental theorem of calculus
$$f(x)-f(y)=\left\{\int_0^1 \nabla f(y+s(x-y))ds\right\}\cdot(x-y),$$
we show that
\begin{align*}
&\|p_hf_h-f\|_{L^2(\R^d)}^2\\
&\lesssim h^{2-d}\int_0^1\int_{\mathbb{R}^d}\int_{|x-y|\leq\sqrt{d} h}|\nabla f(y+s(x-y))|^2+\sum_{j=1}^d|(\nabla_{x_j} f)(y+hse_j)|^2dxdyds\\
&=h^{2-d}\int_0^1\int_{|x|\leq\sqrt{d} h}\int_{\mathbb{R}^d}|\nabla f(y+sx)|^2+\sum_{j=1}^d|(\nabla_{x_j} f)(y+hse_j)|^2dydxds\\
&\sim h^2\|\nabla f\|_{L^2(\mathbb{R}^d)}^2.
\end{align*}

If $f$ is just in $H^\alpha(\mathbb{R}^d)$, then we decompose
$$f=f_{low}+f_{high},$$
where $\widehat{f_{low}}(\xi)=\mathbf{1}_{[-\frac{\pi}{h},\frac{\pi}{h})^d}(\xi)\hat{f}(\xi)$ and $f_{high}=f-f_{low}$. For the low frequency part $f_{low}$, which is smooth, we apply \eqref{Prop:convergence rate of phgh, alpha=1} to get
$$\|p_h(f_{low})_h-f_{low}\|_{L^2(\R^d)} \lesssim h\|f_{low}\|_{H^1(\R^d)}\lesssim h^\alpha \|f\|_{H^\alpha(\mathbb{R}^d)}.$$
On the other hand, for the high frequency part, by trivial estimates and Lemma \ref{linear interpolation inequality} and \ref{discretization inequality}, we have
\begin{align*}
\|p_h(f_{high})_h-f_{high}\|_{L^2(\R^d)} &\leq \|p_h(f_{high})_h\|_{L^2(\R^d)}+\|f_{high}\|_{L^2(\R^d)}\\
&\lesssim\|(f_{high})_h\|_{L_h^2(h\mathbb{Z}^d)}+\|f_{high}\|_{L^2(\R^d)}\\
&\lesssim\|f_{high}\|_{L^2(\R^d)}\lesssim h^\alpha\|f\|_{H^\alpha(\mathbb{R}^d)}.
\end{align*}
Therefore, summing up, we complete the proof.
\end{proof}

As an application of Proposition \ref{Prop:convergence rate of phgh}, we show that the linear interpolation of the discrete linear Schr\"odinger flow is almost like it continuum flow.

\begin{prop}[Linear interpolation of the discrete Schr\"odinger flow]\label{linear approx}
If $u_0\in H^\alpha(\R^d)$ and $u_{h,0}\in H_h^\alpha(h\Z^d)$ with $0\leq \alpha\leq1$, then
\begin{align*}
&\| p_h e^{-it(-\Delta_h)^\alpha} u_{h,0} - e^{-it(-\Delta)^\alpha}u_0 \|_{L^2(\R^d)}\\
&\lesssim h^{\frac{\alpha}{1+\alpha}}|t|\left\{\|u_{h,0}\|_{H_h^\alpha(h\Z^d)}+\|u_0\|_{H^\alpha(\R^d)}\right\}+\|p_hu_{h,0}-u_0\|_{L^2(\mathbb{R}^d)}.
\end{align*}
In particular, if $u_{h,0}$ is the discretization of $u_0$, then by Lemma \ref{discretization inequality} and Proposition \ref{Prop:convergence rate of phgh}, 
$$\| p_h e^{-it(-\Delta_h)^\alpha} u_{h,0} - e^{-it(-\Delta)^\alpha}u_0 \|_{L^2(\R^d)}\lesssim 
 \langle t\rangle h^{\frac{\alpha}{1+\alpha}}\|u_0\|_{H^\alpha(\R^d)}.$$
\end{prop}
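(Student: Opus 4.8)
The plan is to compare the two evolutions by inserting intermediate quantities and using the triangle inequality. Write
\begin{align*}
p_h e^{-it(-\Delta_h)^\alpha} u_{h,0} - e^{-it(-\Delta)^\alpha}u_0
&= \Big(p_h e^{-it(-\Delta_h)^\alpha} u_{h,0} - e^{-it(-\Delta)^\alpha} p_h u_{h,0}\Big)\\
&\quad + e^{-it(-\Delta)^\alpha}\big(p_h u_{h,0} - u_0\big),
\end{align*}
so that, since $e^{-it(-\Delta)^\alpha}$ is unitary on $L^2(\R^d)$, the second term contributes exactly $\|p_h u_{h,0}-u_0\|_{L^2(\R^d)}$. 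The whole task is thus to estimate the first term, which measures how much the lattice flow applied to a function and then interpolated differs from interpolating first and then running the continuum flow.

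For the first term I would use a Duhamel-type identity comparing the two flows. Let $v(t) := p_h e^{-it(-\Delta_h)^\alpha}u_{h,0}$, which solves $i\partial_t v = p_h(-\Delta_h)^\alpha e^{-it(-\Delta_h)^\alpha}u_{h,0}$ with data $p_h u_{h,0}$, and compare with the continuum flow of the same data: by the fundamental theorem of calculus applied to $s\mapsto e^{-i(t-s)(-\Delta)^\alpha}v(s)$,
$$
v(t) - e^{-it(-\Delta)^\alpha}p_h u_{h,0} = -i\int_0^t e^{-i(t-s)(-\Delta)^\alpha}\Big((-\Delta)^\alpha p_h - p_h(-\Delta_h)^\alpha\Big)e^{-is(-\Delta_h)^\alpha}u_{h,0}\, ds.
$$
Using unitarity of $e^{-i(t-s)(-\Delta)^\alpha}$ again, it remains to bound, uniformly in $s$,
$$
\big\|\big((-\Delta)^\alpha p_h - p_h(-\Delta_h)^\alpha\big) g\big\|_{L^2(\R^d)}, \qquad g := e^{-is(-\Delta_h)^\alpha}u_{h,0},
$$
and then integrate in $s$ over $[0,t]$ to pick up the factor $|t|$. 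Note $\|g\|_{H_h^\alpha} = \|u_{h,0}\|_{H_h^\alpha}$ by unitarity of the discrete flow on $H_h^\alpha$, so the estimate should be in terms of $\|g\|_{H_h^\alpha}$.

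The key lemma to establish, therefore, is a \emph{symbol-comparison estimate} of the form
$$
\big\|(-\Delta)^\alpha p_h g - p_h (-\Delta_h)^\alpha g\big\|_{L^2(\R^d)} \lesssim h^{\frac{\alpha}{1+\alpha}}\big(\|g\|_{H_h^\alpha(h\Z^d)} + \|p_h g\|_{H^\alpha(\R^d)}\big) \lesssim h^{\frac{\alpha}{1+\alpha}}\|g\|_{H_h^\alpha(h\Z^d)},
$$
where in the last step one invokes Lemma \ref{linear interpolation inequality} together with the trivial $L^2$-$L_h^2$ bound of $p_h$. To prove it I would split into low and high frequencies at scale $h^{-\beta}$ for a parameter $\beta\in(0,1)$ to be optimized. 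On the low-frequency part $P_{\le h^{-\beta}}$, the symbols $|\xi|^{2\alpha}$ and the lattice symbol $\big(\tfrac{4}{h^2}\sin^2(\tfrac{h\xi}{2})\big)^\alpha$ agree up to $O(h^2|\xi|^2\cdot|\xi|^{2\alpha})=O((h|\xi|)^2|\xi|^{2\alpha})$, which is $\lesssim h^{2-2\beta}|\xi|^{2\alpha}$ on that range, while the interpolation operator $p_h$ differs from the identity (on band-limited functions) by $O(h|\xi|)$ (Proposition \ref{Prop:convergence rate of phgh}-type estimate); combining, the low-frequency error is $\lesssim h^{\min\{2-2\beta,\,1-\beta\}}\|g\|_{H_h^\alpha} = h^{1-\beta}\|g\|_{H_h^\alpha}$. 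On the high-frequency part $|\xi|\ge h^{-\beta}$ one simply discards the commutator structure: each of $\|(-\Delta)^\alpha p_h P_{>h^{-\beta}}g\|_{L^2}$ and $\|p_h(-\Delta_h)^\alpha P_{>h^{-\beta}}g\|_{L^2}$ is controlled using Lemma \ref{H1 bound depending on h} / Lemma \ref{discretization inequality}-type bounds and the Bernstein-type fact that projecting to $|\xi|\gtrsim h^{-\beta}$ and paying $|\xi|^{2\alpha}\lesssim h^{-2\beta(1-?)}\dots$ — more simply, one writes $(-\Delta_h)^\alpha = |\nabla_h|^{\alpha}\cdot|\nabla_h|^{\alpha}$-type decomposition and trades the missing $\alpha$ derivatives for $h^{-\beta\cdot\text{(gap)}}$; the net high-frequency contribution is $\lesssim h^{\beta}\|g\|_{H_h^\alpha}$ (the surviving $H^\alpha$ mass on frequencies $\ge h^{-\beta}$ being small after dividing by $h^{-\beta\alpha}\cdot$). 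Optimizing $h^{1-\beta}$ against $h^{\beta}$ gives $\beta=\tfrac12$ and an exponent $\tfrac12$, which is better than claimed; to land exactly on $\tfrac{\alpha}{1+\alpha}$ one must be more careful on the high-frequency side, where the honest bound is $h^{\alpha\beta}$ (one only has $\alpha$ derivatives of regularity to trade, not a full derivative), and then balancing $h^{1-\beta}$ against $h^{\alpha\beta}$ yields $\beta = \tfrac{1}{1+\alpha}$ and the exponent $\tfrac{\alpha}{1+\alpha}$. The main obstacle, and the only genuinely delicate point, is this high-frequency estimate: near the edge of the Brillouin zone $|\xi|\sim \pi/h$ the lattice symbol $\big(\tfrac{4}{h^2}\sin^2(\tfrac{h\xi}{2})\big)^\alpha$ is comparable to $h^{-2\alpha}$ rather than $|\xi|^{2\alpha}$, so one cannot naively compare symbols there; the fix is to bound the discrete piece by $h^{-2\alpha}\|P_{>h^{-\beta}}g\|_{L_h^2}$ and absorb the loss using $\|P_{>h^{-\beta}}g\|_{L_h^2}\le h^{\beta\alpha}\|g\|_{\dot H_h^\alpha}$, which is exactly where the exponent $\tfrac{\alpha}{1+\alpha}$ comes from after optimization. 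The final statement then follows by combining the two displayed inequalities, using Lemma \ref{discretization inequality} to pass from $\|u_{h,0}\|_{H_h^\alpha}$ to $\|u_0\|_{H^\alpha}$ when $u_{h,0}$ is the discretization of $u_0$, and Proposition \ref{Prop:convergence rate of phgh} to bound $\|p_hu_{h,0}-u_0\|_{L^2}\lesssim h^\alpha\|u_0\|_{H^\alpha}\lesssim h^{\frac{\alpha}{1+\alpha}}\|u_0\|_{H^\alpha}$.
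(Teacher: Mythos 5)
Your outer decomposition (peeling off $e^{-it(-\Delta)^\alpha}(p_hu_{h,0}-u_0)$ by unitarity), your frequency threshold $h^{-1/(1+\alpha)}$, the low-frequency symbol comparison $\big||\xi|^{2\alpha}-(\tfrac{4}{h^2}\sum_j\sin^2(\tfrac{h\xi_j}{2}))^\alpha\big|\lesssim h^2|\xi|^{2\alpha+2}$, and the use of Lemma \ref{discretization inequality} and Proposition \ref{Prop:convergence rate of phgh} for the ``in particular'' part all match the paper's argument. The gap is in the Duhamel reduction. Once you bound the integrand uniformly in $s$, you need the commutator estimate $\|((-\Delta)^\alpha p_h-p_h(-\Delta_h)^\alpha)g\|_{L^2}\lesssim h^{\frac{\alpha}{1+\alpha}}\|g\|_{H_h^\alpha}$, and this is \emph{false}: differentiating in $s$ brings down the generators, which cost $2\alpha$ derivatives, while only $\alpha$ derivatives are uniformly controlled. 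Concretely, take $g$ with $\mathcal{F}_hg$ concentrated near the Brillouin edge $|\xi|=\pi/h$ (in $d=1$, say), normalized so $\|g\|_{H_h^\alpha}=1$, hence $\|g\|_{L_h^2}\sim h^{\alpha}$. There both symbols are of size $h^{-2\alpha}$ but differ by the fixed factor $(\pi/2)^{2\alpha}\neq 1$, and $\mathcal{P}_h(\pi/h)=4/\pi^2\neq0$, so there is no cancellation and the commutator applied to $g$ has $L^2$ norm $\sim h^{-2\alpha}\|g\|_{L_h^2}\sim h^{-\alpha}$, which diverges. Your own bookkeeping already shows this: the high-frequency contribution is $h^{-2\alpha}\cdot h^{\alpha\beta}=h^{\alpha(\beta-2)}$, a \emph{negative} power of $h$ for every admissible $\beta$; the $h^{-2\alpha}$ is silently dropped when you ``optimize'' $h^{1-\beta}$ against $h^{\alpha\beta}$, and no choice of $\beta$ repairs this.

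The fix is to not differentiate in time. By Lemma \ref{p_h symbol} everything in sight is a Fourier multiplier, so the commutator term has Fourier transform $\big\{e^{-it(\frac{4}{h^2}\sum_j\sin^2(\frac{h\xi_j}{2}))^\alpha}-e^{-it|\xi|^{2\alpha}}\big\}\widehat{p_hu_{h,0}}(\xi)$; the difference of the two \emph{unitary} symbols is bounded by $\min\{2,\ |t|\,h^2|\xi|^{2\alpha+2}\}$ (Lemma \ref{Lem:bound for sin} — equivalently, performing your $s$-integral exactly rather than putting absolute values inside it). On $|\xi|\le h^{-\frac{1}{1+\alpha}}$ this is $\le |t|\,h^{\frac{\alpha}{1+\alpha}}|\xi|^\alpha$, giving $|t|\,h^{\frac{\alpha}{1+\alpha}}\|p_hu_{h,0}\|_{H^\alpha}\lesssim |t|\,h^{\frac{\alpha}{1+\alpha}}\|u_{h,0}\|_{H_h^\alpha}$ by Lemma \ref{linear interpolation inequality}; on $|\xi|>h^{-\frac{1}{1+\alpha}}$ one uses the trivial bound $2$ together with the $H^\alpha$-boundedness of both $p_he^{-it(-\Delta_h)^\alpha}u_{h,0}$ and $e^{-it(-\Delta)^\alpha}p_hu_{h,0}$ (Lemma \ref{linear interpolation inequality} and unitarity), so the tail costs $h^{\frac{\alpha}{1+\alpha}}\|u_{h,0}\|_{H_h^\alpha}$ with no factor $h^{-2\alpha}$. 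This is exactly the paper's proof, and it is the step your commutator lemma cannot replace.
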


For the proof, we need the following lemmas.

\begin{lem}[Symbol of the linear interpolation operator]\label{p_h symbol}
The interpolation operator $p_h$ is a Fourier multiplier operator in the sense that
$$\widehat{p_h u_h}(\xi) = \mathcal{P}_h(\xi) \mathcal (\tilde{\mathcal{F}}_h u_h)(\xi),\quad\forall\xi\in\mathbb{R}^d,$$
	where $\mathcal{P}_h:\mathbb{R}^d\to\mathbb{C}$ and $\tilde{\mathcal{F}_h}$ denotes the $[-\frac{\pi}{h},\frac{\pi}{h})^d$-periodic extension of the discrete Fourier transform $\mathcal{F}_h$, that is, $(\tilde{\mathcal{F}_h}u_h)(\xi')=(\mathcal{F}_hu_h)(\xi)$ for all $\xi'\in\xi+\frac{2\pi}{h}\mathbb{Z}^d.$
\end{lem}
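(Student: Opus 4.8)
The plan is to compute the Fourier transform of $p_h u_h$ directly from the explicit piecewise-linear formula \eqref{def:  linear interpolation} and read off the multiplier $\mathcal{P}_h$. First I would split $\mathbb{R}^d = \bigsqcup_{x_m\in h\mathbb{Z}^d}(x_m+[0,h)^d)$ and write
\[
\widehat{p_hu_h}(\xi) = \sum_{x_m\in h\mathbb{Z}^d}\int_{x_m+[0,h)^d}\Big(f(x_m)+\sum_{j=1}^d(D_{h;j}^+f)(x_m)(x-x_m)_j\Big)e^{-ix\cdot\xi}\,dx,
\]
writing $f$ for $u_h$. Substituting $x = x_m+y$ with $y\in[0,h)^d$ factors out $e^{-ix_m\cdot\xi}$ from each summand, leaving an integral over the single cell $[0,h)^d$ that depends on $x_m$ only through the finitely many lattice values $f(x_m)$, $f(x_m+he_j)$. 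The remaining cell integrals are elementary: $\int_{[0,h)^d}e^{-iy\cdot\xi}\,dy = \prod_{k=1}^d\frac{1-e^{-ih\xi_k}}{i\xi_k}$ and $\int_{[0,h)^d}y_j e^{-iy\cdot\xi}\,dy$, which one computes by differentiating the former in $\xi_j$ (or directly by parts).

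Next I would resum over $x_m$. Each term of the form $\sum_{x_m}f(x_m)e^{-ix_m\cdot\xi}$ is exactly $h^{-d}$ times $(\mathcal{F}_hf)(\xi)$, and the shifted terms $\sum_{x_m}f(x_m+he_j)e^{-ix_m\cdot\xi} = e^{ih\xi_j}\sum_{x_m}f(x_m)e^{-ix_m\cdot\xi}$ differ only by the phase $e^{ih\xi_j}$. Crucially, the function $\xi\mapsto h^d\sum_{x_m}f(x_m)e^{-ix_m\cdot\xi}$ is $\frac{2\pi}{h}\mathbb{Z}^d$-periodic, so it equals $(\tilde{\mathcal{F}}_hf)(\xi)$ for \emph{all} $\xi\in\mathbb{R}^d$, not merely on the fundamental cell; this is what lets the formula be stated on all of $\mathbb{R}^d$ rather than only on $[-\frac{\pi}{h},\frac{\pi}{h})^d$. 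Collecting the cell-integral factors computed above into a single function of $\xi$ gives the multiplier $\mathcal{P}_h(\xi)$ explicitly — a product over coordinates of a factor built from $\frac{1-e^{-ih\xi_k}}{i\xi_k}$ and the derivative integrals — and completes the identification $\widehat{p_hu_h}(\xi)=\mathcal{P}_h(\xi)(\tilde{\mathcal{F}}_hu_h)(\xi)$.

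There is no serious obstacle here: the argument is a bookkeeping computation, and the only points requiring a little care are (i) justifying the interchange of sum and integral (legitimate for $u_h$ rapidly decreasing, then extended to $L_h^2$ by density, which is anyway the standing assumption under which such Fourier identities are stated in this paper), and (ii) being careful that the periodicized transform $\tilde{\mathcal{F}}_h$ is the correct object to factor out, so that the identity is genuinely valid for all $\xi\in\mathbb{R}^d$ — the periodicity of the lattice sum is exactly the mechanism that makes this work, and it is worth emphasizing since $p_hu_h$ itself is of course not band-limited. The mild "hard part," if any, is simply assembling the several one-dimensional cell integrals into a clean closed form for $\mathcal{P}_h$; since the present lemma only asserts that $p_h$ \emph{is} a Fourier multiplier (the precise form of $\mathcal{P}_h$ will be used later, e.g. in Lemma~\ref{linear interpolation inequality} and Proposition~\ref{linear approx}), it suffices to exhibit $\mathcal{P}_h$ in this factored form and note its elementary properties.
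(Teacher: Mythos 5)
Your proposal is correct and follows essentially the same route as the paper: decompose the Fourier integral over the cells $x_m+[0,h)^d$, change variables to factor out $e^{-ix_m\cdot\xi}$ and shift the lattice sum to pick up the phases $e^{ih\xi_j}$, then resum to identify the periodic factor $\tilde{\mathcal{F}}_hu_h$ and collect the cell integrals into $\mathcal{P}_h(\xi)$. Your added remarks on periodicity being the mechanism that validates the identity on all of $\mathbb{R}^d$, and on justifying the sum--integral interchange, are sound but do not change the argument.
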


\begin{proof}
We decompose the Fourier transform of $p_h u_h$ into integrals on small cubes, 
$$\widehat{p_h u_h}(\xi)=\sum_{x_m\in h\Z^d} \int_{x_m+[0,h)^d} (p_h u_h)(x)e^{-ix\cdot \xi} dx.$$
By the definition of $p_h$ (see \eqref{p_h}) and by simple changes of variables, each integral can be written explicitly as
$$\begin{aligned}
&\int_{x_m+[0,h)^d} (p_h u_h)(x)e^{-ix\cdot \xi} dx\\
&=\int_{x_m+[0,h)^d} \Bigg\{ u_h(x_m) +\sum_{j=1}^d\frac{u_h(x_m+he_j)-u_h(x_m)}{h} (x-x_m)_j\Bigg\}e^{-ix\cdot \xi} dx \\
&=u_h(x_m)e^{-ix_m\cdot\xi}\int_{[0,h)^d} e^{-ix\cdot \xi} dx+e^{-ix_m\cdot\xi}\sum_{j=1}^d\frac{u_h(x_m+he_j)-u_h(x_m)}{h} \left\{\int_{[0,h)^d} x_j e^{-ix\cdot \xi}dx\right\}.
\end{aligned}$$
Then, summing in $x_m$, we get
$$\begin{aligned}
\widehat{p_h u_h}(\xi)&=\sum_{x_m\in h\mathbb{Z}^d}u_h(x_m)e^{-ix_m\cdot\xi}\Bigg\{\int_{[0,h)^d} e^{-ix\cdot \xi} dx+\sum_{j=1}^d \frac{e^{ih\xi_j}-1}{h}\int_{[0,h)^d} x_j e^{-ix\cdot \xi}dx\Bigg\}\\
&=\mathcal{P}_h(\xi) \mathcal (\tilde{\mathcal{F}}_h u_h)(\xi),
\end{aligned}$$
where
\begin{equation}\label{p_h symbol calculation}
\mathcal{P}_h(\xi)=\frac{1}{h^d}\int_{[0,h)^d} e^{-ix\cdot \xi} dx+\sum_{j=1}^d \frac{e^{ih\xi_j}-1}{h}\cdot\frac{1}{h^d}\int_{[0,h)^d} x_j e^{-ix\cdot \xi}dx.
\end{equation}
Here, we used that $e^{-ix_m\cdot\xi}$ is $[-\frac{\pi}{h},\frac{\pi}{h})^d$-periodic, since $x_m\in h\mathbb{Z}^d$.
\end{proof}

\begin{rem}
By a direct computation from \eqref{p_h symbol calculation}, one can show that
$$\mathcal{P}_h(\xi)= \prod_{k=1}^d \frac{e^{-ih\xi_k}-1}{-ih\xi_k}  -\sum_{j=1}^{d} \left\{\frac{e^{-ih\xi_j}-1}{-ih\xi_j} -\frac{4}{h^2\xi_j^2}\sin^{2}\left(\frac{h\xi_j}{2}\right) \right\}\prod_{k\neq j} \frac{e^{-ih\xi_k}-1}{-ih\xi_k}.$$
\end{rem}

\begin{lem}\label{Lem:bound for sin}
If $0<\alpha\le 1$ and $\xi\in[-\tfrac{\pi}{h}, \tfrac{\pi}{h}]^d$, then
$$\Big|e^{-it\left(\frac{4}{h^2}\sum_{j=1}^d \sin^2\left(\frac{h\xi_j}{2}\right)\right)^\alpha}-e^{-it|\xi|^{2\alpha}}\Big|\ls |t|h^2|\xi|^{2\alpha+2}.$$
\end{lem}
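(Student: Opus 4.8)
The plan is to peel off the (unimodular) exponentials and reduce everything to an elementary estimate on the two symbols. Write $A:=\tfrac{4}{h^2}\sum_{j=1}^d\sin^2(\tfrac{h\xi_j}{2})$ and $B:=|\xi|^2=\sum_{j=1}^d\xi_j^2$, both nonnegative reals, so that the quantity to be bounded is $|e^{-itA^\alpha}-e^{-itB^\alpha}|$. Since $A^\alpha$ and $B^\alpha$ are real, the elementary inequality $|e^{ia}-e^{ib}|\le|a-b|$ immediately gives $|e^{-itA^\alpha}-e^{-itB^\alpha}|\le|t|\,|A^\alpha-B^\alpha|$, and the whole lemma comes down to showing $|A^\alpha-B^\alpha|\lesssim h^2|\xi|^{2\alpha+2}$.

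To handle the fractional power I would first record that $A$ and $B$ are comparable. Because $\xi\in[-\tfrac{\pi}{h},\tfrac{\pi}{h}]^d$ forces each $\theta_j:=\tfrac{h\xi_j}{2}$ into $[-\tfrac{\pi}{2},\tfrac{\pi}{2}]$, Jordan's inequality $\tfrac{2}{\pi}|\theta|\le|\sin\theta|\le|\theta|$ applies there; squaring and summing over $j$ yields $\tfrac{4}{\pi^2}B\le A\le B$. In particular $A=0$ exactly when $\xi=0$, in which case the assertion is trivial, so one may assume $\xi\neq0$ and $0<A\le B$. Then I would write $A^\alpha-B^\alpha=\alpha\int_A^B s^{\alpha-1}\,ds$ and, using that $s\mapsto s^{\alpha-1}$ is nonincreasing for $0<\alpha\le1$, bound $|A^\alpha-B^\alpha|\le\alpha A^{\alpha-1}(B-A)$; finally the lower comparison $A\ge\tfrac{4}{\pi^2}B$ lets one replace $A^{\alpha-1}$ by a constant multiple of $B^{\alpha-1}=|\xi|^{2\alpha-2}$.

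It remains to estimate the ``error'' $B-A=\sum_j\tfrac{4}{h^2}\bigl(\theta_j^2-\sin^2\theta_j\bigr)$. From $\sin\theta\ge\theta-\tfrac{\theta^3}{6}$ for $\theta\ge0$ one gets $\theta^2-\sin^2\theta\le\tfrac{\theta^4}{3}$ on $[-\tfrac{\pi}{2},\tfrac{\pi}{2}]$, hence each summand is at most $\tfrac{4}{h^2}\cdot\tfrac{\theta_j^4}{3}=\tfrac{h^2\xi_j^4}{12}$; summing and using $\sum_j\xi_j^4\le\bigl(\sum_j\xi_j^2\bigr)^2=|\xi|^4$ gives $0\le B-A\lesssim h^2|\xi|^4$. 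Combining with the previous step, $|A^\alpha-B^\alpha|\lesssim|\xi|^{2\alpha-2}\cdot h^2|\xi|^4=h^2|\xi|^{2\alpha+2}$, and multiplying by $|t|$ finishes the proof. I expect no real obstacle here; the only point needing a little care is the case $\alpha<1$, where $A^{\alpha-1}$ blows up as $\xi\to0$ — this is exactly why the two-sided comparison $A\sim B$ (rather than merely $A\le B$) is used, so that the blow-up is absorbed into $B^{\alpha-1}$ and then extinguished by the factor $B-A\sim h^2B^2$.
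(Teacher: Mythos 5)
Your proof is correct, and it handles the key estimate $|A^\alpha-B^\alpha|\lesssim h^2|\xi|^{2\alpha+2}$ by a genuinely different (and in fact tighter) route than the paper. Both arguments begin identically, peeling off the unimodular factors via $|e^{-ia}-e^{-ib}|\le|a-b|$ to reduce to the symbol difference. From there the paper splits into two regimes: for $|\xi|\ll 1/h$ it Taylor-expands, writing $\big(|\tfrac{h\xi}{2}|^2+O(h^4|\xi|^4)\big)^\alpha-|\tfrac{h\xi}{2}|^{2\alpha}=|\tfrac{h\xi}{2}|^{2\alpha}\big((1+O(h^2|\xi|^2))^\alpha-1\big)\lesssim(h|\xi|)^{2+2\alpha}$, and for $|\xi|\gtrsim 1/h$ it simply bounds both terms by a constant, which is already $\lesssim(h|\xi|)^{2+2\alpha}$. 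Your argument avoids the case split entirely: the two-sided Jordan comparison $\tfrac{4}{\pi^2}|\xi|^2\le A\le|\xi|^2$, valid on the whole cell $[-\tfrac{\pi}{h},\tfrac{\pi}{h}]^d$, together with $|A^\alpha-B^\alpha|\le\alpha A^{\alpha-1}(B-A)$ and the explicit bound $0\le B-A\le\tfrac{1}{12}h^2|\xi|^4$ from $\sin\theta\ge\theta-\tfrac{\theta^3}{6}$, gives the estimate uniformly in $\xi$. What this buys you is a self-contained, fully quantitative proof in which the only possible singularity (the factor $A^{\alpha-1}$ as $\xi\to 0$ when $\alpha<1$) is explicitly tamed by the lower comparison $A\gtrsim|\xi|^2$ and then killed by $B-A\lesssim h^2|\xi|^4$; the paper's version is shorter but leans on an $O(\cdot)$ Taylor remainder that is only meaningful in the small-frequency regime, which is precisely why it needs the second case. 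Both are valid; yours is the more careful write-up of the same underlying cancellation.
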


\begin{proof}
It is obvious that
\begin{align*}
&\Big|e^{-it\left(\frac{4}{h^2}\sum_{j=1}^d \sin^2\left(\frac{h\xi_j}{2}\right)\right)^\alpha}-e^{-it|\xi|^{2\alpha}}\Big|=\Big|e^{-it\left\{\left(\frac{4}{h^2}\sum_{j=1}^d \sin^2\left(\frac{h\xi_j}{2}\right)\right)^\alpha-|\xi|^{2\alpha}\right\}}-1\Big|\\
&\leq |t|\Big|\Big(\tfrac{4}{h^2}\sum_{j=1}^d \sin^2(\tfrac{h\xi_j}{2})\Big)^\alpha-|\xi|^{2\alpha}\Big|=\tfrac{4^\alpha}{h^{2\alpha}}|t|\Big|\Big(\sum_{j=1}^d \sin^2(\tfrac{h\xi_j}{2})\Big)^\alpha - \big|\tfrac{h\xi}{2}\big|^{2\alpha}\Big|.
\end{align*}
Hence, it is enough to estimate $\big(\sum_{j=1}^d \sin^2(\frac{h\xi_j}{2})\big)^\alpha - \big|\frac{h\xi}{2}\big|^{2\alpha}$. Indeed, if $|\xi|\ll \frac{1}{h}$, then by Taylor's theorem,
\begin{align*}
\Big|\Big(\sum_{j=1}^d \sin^2(\tfrac{h\xi_j}{2})\Big)^\alpha - \big|\tfrac{h\xi}{2}\big|^{2\alpha}\Big|&=\Big|\Big(\big|\tfrac{h\xi}{2}\big|^2+O\left(h^4|\xi|^4\right)\Big)^\alpha - \big|\tfrac{h\xi}{2}\big|^{2\alpha}\Big|\\
&=\big|\tfrac{h\xi}{2}\big|^{2\alpha}\Big(\left(1+O\left(h^2|\xi|^2\right)\right)^\alpha - 1\Big)\\
&\lesssim \left(h|\xi|\right)^{2+2\alpha}.
\end{align*}
On the other hand, if $|\xi|\gtrsim \frac{1}{h}$, then by a trivial inequality, $\big|\big(\sum_{j=1}^d \sin^2(\frac{h\xi_j}{2})\big)^\alpha - \big|\frac{h\xi}{2}\big|^{2\alpha}\big|\leq \big(\sum_{j=1}^d \sin^2(\tfrac{h\xi_j}{2})\big)^\alpha +\big|\tfrac{h\xi}{2}\big|^{2\alpha}
\ls 1\lesssim \left(h|\xi|\right)^{2+2\alpha}$.
\end{proof}

\begin{proof}[Proof of Proposition \ref{linear approx}]
We decompose
\begin{align*}
&p_h e^{-it(-\Delta_h)^\alpha} u_{h,0} - e^{-it(-\Delta)^\alpha} u_0\\
&=\left\{p_he^{-it(-\Delta_h)^\alpha}u_{h,0}-e^{-it(-\Delta)^\alpha} p_hu_{h,0}\right\}+e^{-it(-\Delta)^\alpha}(p_hu_{h,0}-u_0)\\
&=\left\{p_he^{-it(-\Delta_h)^\alpha}u_{h,0}-e^{-it(-\Delta)^\alpha} p_hu_{h,0}\right\}_{low}\\
&\quad+\left\{p_he^{-it(-\Delta_h)^\alpha}u_{h,0}-e^{-it(-\Delta)^\alpha} p_hu_{h,0}\right\}_{high}+e^{-it(-\Delta)^\alpha}(p_hu_{h,0}-u_0)\\
&=:I+II+III,
\end{align*}
where $\widehat{f_{low}}(\xi)=\mathbf{1}_{ |\xi|\leq h^{-\frac{1}{1+\alpha}}}(\xi)\hat{f}(\xi)$ and $f_{high}=f-f_{low}$. For $I$, by the Plancherel theorem and Lemma \ref{p_h symbol} and \ref{Lem:bound for sin}, we get the bound, 
\begin{align*}
\|I\|_{L^2(\mathbb{R}^d)}&=\frac{1}{(2\pi)^\frac{d}{2}}\Big\|\Big\{e^{-it\left(\frac{4}{h^2}\sum_{j=1}^d \sin^2\left(\frac{h\xi_j}{2}\right)\right)^\alpha}-e^{-it|\xi|^{2\alpha}}\Big\}\widehat{p_hu_{h,0}}(\xi)\Big\|_{L^2(\{|\xi| \le h^{-\frac{1}{1+\alpha}}\})}  \\
&\lesssim  \left\||t|h^2|\xi|^{2+2\alpha}\widehat{p_hu_{h,0}}(\xi) \right\|_{L^2(\{|\xi| \le h^{-\frac{1}{1+\alpha}}\})} \\
&\le |t|h^\frac{\alpha}{1+\alpha}\|p_hu_{h,0}\|_{H^\alpha} \lesssim |t|h^\frac{\alpha}{1+\alpha}\|u_{h,0}\|_{H_h^\alpha}.
\end{align*}
For $II$, we use Lemma \ref{linear interpolation inequality} to obtain
\begin{align*}
\|II\|_{L^2(\mathbb{R}^d)}&\lesssim h^{\frac{\alpha}{1+\alpha}}\left\{\|p_he^{-it(-\Delta_h)^\alpha}u_{h,0}\|_{H^\alpha(\mathbb{R}^d)}+\|e^{-it(-\Delta)^\alpha} p_hu_{h,0}\|_{H^\alpha(\mathbb{R}^d)}\right\}\\
&\lesssim h^{\frac{\alpha}{1+\alpha}}\left\{\|e^{-it(-\Delta_h)^\alpha}u_{h,0}\|_{H_h^\alpha(h\mathbb{Z}^d)}+\|p_hu_{h,0}\|_{H^\alpha(\mathbb{R}^d)}\right\}\\
&\lesssim h^{\frac{\alpha}{1+\alpha}}\|u_{h,0}\|_{H_h^\alpha(h\mathbb{Z}^d)}.
\end{align*}
Moreover, we have $\|III\|_{L^2(\mathbb{R}^d)}=\|p_hu_{h,0}-u_0\|_{L^2(\mathbb{R}^d)}$. Collecting all, we complete the proof.
\end{proof}

Finally, we show the following almost distributive law.

\begin{prop}[Almost distributive law for linear interpolation]\label{ph distribution}
	If $p>1$ and $0<\alpha\le1$, then
	\begin{equation}\label{ph L2bound}
	\left\| p_h(|u_h|^{p-1}u_h)(x)-(|p_hu_h|^{p-1}p_h u_h)(x)\right\|_{L^2(\R^d)}\le h^\alpha \| u_h\|_{L_h^\infty(h\Z^d)}^{p-1}\|u_h\|_{H_h^\alpha(h\Z^d)}.
	\end{equation}
	\end{prop}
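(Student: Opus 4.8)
The plan is to reduce to a cube-by-cube estimate and exploit that $F(z):=|z|^{p-1}z$ is Lipschitz on balls. Fix a cube $Q_m = x_m + [0,h)^d$ of the tiling of $\R^d$, and set $a := u_h(x_m)$, $b_j := u_h(x_m + he_j)$, $t_j := (x-x_m)_j/h \in [0,1)$. By the definition \eqref{def: linear interpolation} of $p_h$, on $Q_m$ one has $(p_h(F(u_h)))(x) = F(a) + \sum_{j=1}^d t_j(F(b_j)-F(a))$, while $(F(p_h u_h))(x) = F\big(a + \sum_{j=1}^d t_j(b_j - a)\big)$; so the pointwise difference is exactly the error made by affine interpolation of the composition $t \mapsto F(a + \sum_j t_j(b_j-a))$ at the simplex nodes $0, e_1, \dots, e_d$, evaluated on the cube.

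The first step is the pointwise bound
\[
\big|(p_h(F(u_h)))(x) - (F(p_h u_h))(x)\big| \;\lesssim\; \|u_h\|_{L_h^\infty}^{p-1}\sum_{j=1}^d |b_j - a|, \qquad x \in Q_m,
\]
obtained by the triangle inequality together with the elementary estimate $|F(z) - F(w)| \le p\max(|z|,|w|)^{p-1}|z-w|$, which is read off from \eqref{FTC application}. For the term $F(b_j) - F(a)$ this is immediate since $|a|, |b_j| \le \|u_h\|_{L_h^\infty}$; for the term $F(a) - F(a + \sum_j t_j(b_j-a))$ one uses that the affine combination still lies in a ball of radius $(2d+1)\|u_h\|_{L_h^\infty}$ (here is the one place where $p_h$ being affine rather than convex interpolation in dimension $d \ge 2$ costs a $d$-dependent constant, but nothing worse). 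The point is that only a \emph{first-order} bound in the increments $b_j - a$ is needed --- no second-order cancellation and no $C^2$-regularity of $F$ --- which is why the exponent $h^\alpha$ with $0 < \alpha \le 1$ comes out for free.

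Next I would integrate over $Q_m$ (of volume $h^d$), sum over $m$ and apply the Cauchy--Schwarz inequality in $j$, to arrive at
\[
\|p_h(F(u_h)) - F(p_h u_h)\|_{L^2(\R^d)}^2 \;\lesssim\; \|u_h\|_{L_h^\infty}^{2(p-1)}\sum_{j=1}^d \|u_h(\cdot + he_j) - u_h\|_{L_h^2}^2 .
\]
It then remains to control the translation increments by the Sobolev norm: by the Plancherel theorem on $[-\tfrac{\pi}{h}, \tfrac{\pi}{h})^d$ and the elementary bound $|e^{ih\xi_j}-1| = 2|\sin(\tfrac{h\xi_j}{2})| \le \min(2, h|\xi|) \lesssim (h|\xi|)^\alpha$, valid uniformly on $[-\tfrac{\pi}{h},\tfrac{\pi}{h})^d$ for $0 < \alpha \le 1$, one gets $\|u_h(\cdot + he_j) - u_h\|_{L_h^2} \lesssim h^\alpha\|u_h\|_{\dot H_h^\alpha} \le h^\alpha\|u_h\|_{H_h^\alpha}$ with $h$-independent constant. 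Inserting this into the previous display gives \eqref{ph L2bound}. I do not anticipate a genuine obstacle: the only subtlety is the affine-versus-convex point noted above, and the rest is bookkeeping.
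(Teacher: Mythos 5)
Your proof is correct and follows essentially the same route as the paper: the same cube-by-cube decomposition, the same pointwise Lipschitz bound for $z\mapsto|z|^{p-1}z$ via \eqref{FTC application}, and the same reduction to the discrete increments $u_h(x_m+he_j)-u_h(x_m)$. The only (cosmetic) difference is the final step: you bound $\|u_h(\cdot+he_j)-u_h\|_{L_h^2}\lesssim h^\alpha\|u_h\|_{\dot H_h^\alpha}$ directly by Plancherel and the symbol estimate $|e^{ih\xi_j}-1|\lesssim (h|\xi|)^\alpha$, whereas the paper first proves the case $\alpha=1$ (using $|D_h^+u_h(x_m)|$ and the norm equivalence of Proposition \ref{norm equivalence}) and then invokes Lemma \ref{H1 bound depending on h} to trade $h\|u_h\|_{\dot H_h^1}$ for $h^\alpha\|u_h\|_{\dot H_h^\alpha}$ --- the two computations are equivalent.
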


\begin{proof}
By Lemma \ref{H1 bound depending on h}, it suffices to show the proposition with $\alpha=1$. For $x\in x_m+[0,h)^d$, we write 
\begin{align*}
&p_h(|u_h|^{p-1}u_h)(x) - (|p_hu_h(x)|^{p-1}p_h u_h)(x)\\
&=(|u_h|^{p-1}u_h)(x_m)- (|p_hu_h(x)|^{p-1}p_h u_h)(x)\\
&\quad+  \sum_{j=1}^{d}\frac{(|u_h|^{p-1}u_h)(x_m+he_j)-(|u_h|^{p-1}u_h)(x_m)}{h}(x-x_m)_j.
\end{align*}
Hence, by the fundamental theorem of calculus \eqref{FTC application}, we get
\begin{align*}
&\left|p_h(|u_h|^{p-1}u_h)(x) - (|p_hu_h|^{p-1}p_h u_h)(x)\right|\\
&\lesssim \left(|u_h(x_m)|^{p-1}+|p_hu_h(x)|^{p-1}\right)|u_h(x_m)-p_hu_h(x)|\\
&\quad+ \sum_{j=1}^d \left(|u_h(x_m+he_j)|^{p-1}+|u_h(x_m)|^{p-1}\right)|u_h(x_m+he_j)-u_h(x_m)|\\
&\leq \left(|u_h(x_m)|^{p-1}+|p_hu_h(x)|^{p-1}\right)\left|(D_h^+u_h)(x_m)\cdot(x-x_m)\right|\\
&\quad+ \sum_{j=1}^d \left(|u_h(x_m+he_j)|^{p-1}+|u_h(x_m)|^{p-1}\right)\left|(D_{h;j}^+u_h)(x_m)(x-x_m)_j\right|\\
&\lesssim h \|u_h\|_{L_h^\infty}^{p-1}\left|(D_h^+u_h)(x_m)\right|.
\end{align*}
Thus, integrating its square over $x_m+[0,h)^d$, it follows that
$$\|p_h(|u_h|^{p-1}u_h)(x) - (|p_hu_h|^{p-1}p_h u_h)(x)\|_{L_x^2(x_m+[0,h)^d)}^2\lesssim h^2\|u_h\|_{L_h^\infty}^{2(p-1)}|(D_h^+u_h)(x_m)|^2h^d.$$
Therefore, summing in $x_m$, we prove the proposition.
\end{proof}

\section{Proof of the Main Theorem}

We are now ready to prove Theorem \ref{main theorem}. We fix $u_0$ in $H^\alpha=H^\alpha(\mathbb{R}^d)$, and let $u(t)\in C(\mathbb{R};H^\alpha)$ be the global solution to NLS \eqref{NLS} with initial data $u_0$. With abuse of notation, for each $h>0$, we denote by $u_h(t)$ the global solution (not the discretization of $u(t)$) to DNLS \eqref{DNLS} with initial data $u_{h,0}$, that is, the discretization of initial data $u_0$ (see \eqref{discretization}). Then, we will straightforwardly compare
$$p_hu_h(t)=p_he^{-it(-\Delta_h)^\alpha}u_{h,0}-i\lambda\int_0^t p_he^{-i(t-s)(-\Delta_h)^\alpha}(|u_h|^{p-1}u_h)(s)ds$$
with
$$u(t)=e^{-it(-\Delta)^\alpha}u_{h,0}-i\lambda\int_0^t e^{-i(t-s)(-\Delta)^\alpha}(|u|^{p-1}u)(s)ds.$$

We write the difference as
\begin{align*}
p_hu_h(t)-u(t)&=\left\{p_he^{-it(-\Delta_h)^\alpha}u_{h,0}-e^{-it(-\Delta)^\alpha}u_0\right\}\\
&\quad-i\lambda\int_0^t \left(p_h e^{-i(t-s)(-\Delta_h)^\alpha}-e^{-i(t-s)(-\Delta)^\alpha}p_h\right)(|u_h|^{p-1}u_h)(s)ds\\
&\quad-i\lambda\int_0^t e^{-i(t-s)(-\Delta)^\alpha}\left(p_h(|u_h|^{p-1}u_h)(s)-|p_hu_h|^{p-1}p_hu_h(s)\right)ds\\
&\quad-i\lambda\int_0^t e^{-i(t-s)(-\Delta)^\alpha}\left(|p_hu_h|^{p-1}p_hu_h(s)-|u|^{p-1}u(s)\right)ds\\
&=:I+II+III+IV.
\end{align*}
Indeed, it follows from Proposition \ref{linear approx}, Lemma \ref{discretization inequality} and Proposition \ref{Prop:convergence rate of phgh} that
$$\begin{aligned}\|I\|_{L^2}&\lesssim h^{\frac{\alpha}{1+\alpha}}|t|\left\{\|u_{h,0}\|_{H_h^\alpha(h\Z^d)}+\|u_0\|_{H^\alpha(\R^d)}\right\}+\|p_hu_{h,0}-u_0\|_{L^2(\mathbb{R}^d)}\\
&\lesssim h^{\frac{\alpha}{1+\alpha}}\langle t\rangle\|u_0\|_{H^\alpha}.\end{aligned}$$
For $II$, we apply Proposition \ref{linear approx} and Lemma \ref{linear interpolation inequality} and \ref{Lem:Nonlinear estimate} in order,
\begin{align*}
\|II\|_{L^2}&\lesssim \int_0^t h^{\frac{\alpha}{1+\alpha}}|t-s|\left\{\left\||u_h|^{p-1}u_h(s)\right\|_{H_h^\alpha}+\left\|p_h(|u_h|^{p-1}u_h)(s)\right\|_{H^\alpha}\right\}ds\\
&\lesssim h^{\frac{\alpha}{1+\alpha}}|t|\int_0^t \left\||u_h|^{p-1}u_h(s)\right\|_{H_h^\alpha}ds\\
&\lesssim h^{\frac{\alpha}{1+\alpha}}|t| |t|^{1-\frac{p-1}{q_*}}\|u_h\|_{L_{s}^{q_*}([0,t]; L_h^\infty)}^{p-1}\|u_h\|_{C_t([0,t]; H_h^\alpha)}.
\end{align*}
Thus, by the uniform $L_h^\infty$ bound (Proposition \ref{L^infty bound}) and the uniform $H_h^\alpha$-norm bound (via the conservation laws) on $u_h$, we prove that $\|II\|_{L^2}\lesssim h^{\frac{\alpha}{1+\alpha}}|t|^{2-\frac{p-1}{q_*}}\langle t\rangle^{\frac{p-1}{q_*}}\|u_{h,0}\|_{H_h^\alpha}^p\lesssim h^{\frac{\alpha}{1+\alpha}}\langle t\rangle^2\|u_{0}\|_{H_h^\alpha}^p$. For $III$, it follows from Proposition \ref{ph distribution} that
\begin{align*}
\|III\|_{L^2}&\lesssim \int_0^t \left\||p_hu_h|^{p-1}p_hu_h(s)-p_h(|u_h|^{p-1}u_h)(s)\right\|_{L^2}ds\\
&\lesssim h^{\frac{\alpha}{1+\alpha}}\int_0^t \|u_h(s)\|_{L_h^\infty}^{p-1}\|u_h(s)\|_{H_h^\alpha}ds.
\end{align*}
Then, repeating the argument on $II$, one can show that  $\|III\|_{L^2}\lesssim h^{\frac{\alpha}{1+\alpha}}\langle t\rangle\|u_{0}\|_{H_h^\alpha}^p$. Finally, for $IV$, we apply \eqref{FTC application} to get
\begin{align*}
\|IV\|_{L^2}&\lesssim \int_0^t \left\||p_hu_h|^{p-1}p_hu_h(s)-|u|^{p-1}u(s)\right\|_{L^2}ds\\
&\lesssim \int_0^t \left(\|u_h(s)\|_{L_h^\infty}+\|u(s)\|_{L^\infty}\right)^{p-1}\|p_hu_h(s)-u(s)\|_{L^2}ds.
\end{align*}
Thus, collecting all, we obtain
\begin{align*}
\|u(t)-p_hu_h(t)\|_{L^2}&\lesssim h^{\frac{\alpha}{1+\alpha}}\langle t\rangle^2 \left(1+\|u_{0}\|_{H_h^\alpha}\right)^p\\
&\quad+\int_0^t \left(\|u(s)\|_{L^\infty}+\|u_h(s)\|_{L_h^\infty}\right)^{p-1}\|u(s)-p_hu_h(s)\|_{L^2}ds.
\end{align*}
Finally, applying the Gr\"onwall's inequality, we conclude that 
\begin{align}\label{appendix}
\begin{aligned}
\|u(t)-p_hu_h(t)\|_{L^2}&\lesssim h^{\frac{\alpha}{1+\alpha}}\langle t\rangle^2 \left(1+\|u_{0}\|_{H_h^\alpha}\right)^p \exp\left\{\int_0^t \left(\|u(s)\|_{L^\infty}+\|u_h(s)\|_{L_h^\infty}\right)^{p-1} ds\right\}\\
&\lesssim h^{\frac{\alpha}{1+\alpha}}e^{B|t|}\left(1+\|u_{0}\|_{H_h^\alpha}\right)^p.
\end{aligned}
\end{align}
Indeed, in the last step, the time-averaged $L_h^\infty$ ($L^\infty$, respectively)-bound on $u_h(t)$ ($u(t)$, respectively) are employed (see Proposition \ref{L^infty bound} and Corollary \ref{time-averaged bound for NLS}), and $\langle t\rangle^2$ is absorbed to the exponentially growing term $e^{B|t|}$ since we may take a larger constant $B>0$ for free.

\appendix

\section{Well-posedness theory for (fractional) NLS}

In the appendix, we briefly explain how to get a time averaged $L^\infty$-bound on solutions to NLS, which is similar to a uniform time-averaged $L_h^\infty$-bound on solutions to DNLS \eqref{DNLS}. Indeed, these bounds play a crucial role in the poof of the main theorem (see \eqref{appendix}).

We consider NLS
\begin{equation}\label{NLS2}
\left\{\begin{aligned}
i\partial_t u&=(-\Delta)^\alpha u+\lambda |u|^{p-1}u, \\
u(0)&=u_0\in H^s(\R^d),
\end{aligned}\right.
\end{equation}
where $0<\alpha\leq 1$, $\alpha\neq \frac{1}{2}$ and
$u=u(t,x):\mathbb{R}\times \mathbb{R}^d\to\mathbb{C}$. We recall the following global well-posedness result.

\begin{thm}[Global well-posedness of NLS]\label{WP:NLS}
Suppose that $d$, $\alpha$, $p$ and $\lambda$ obey the hypotheses in Theorem \ref{main theorem}. Then, for any initial data $u_0\in H^\alpha=H^\alpha(\mathbb{R}^d)$, there exists a unique global solution $u(t)\in C_t(\mathbb{R}; H^\alpha)$ to NLS \eqref{NLS2} such that
\begin{equation}\label{NLS a priori bound}
\|u(t)\|_{L_t^q([-T,T]; W^{\alpha-\frac{2(1-\alpha)}{q}, r})}\lesssim \langle T\rangle^{1/q}
\end{equation}
for all admissible pair $(q,r)$ satisfying \eqref{admissible}. 
\end{thm}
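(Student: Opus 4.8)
The plan is to run the standard contraction-mapping local well-posedness scheme in $H^\alpha$, globalize it via the conservation laws exactly as in the proof of Proposition~\ref{L^infty bound}, and then read off the space-time bound \eqref{NLS a priori bound} by iterating the local Strichartz bound over the subintervals of $[-T,T]$. The only continuum-specific ingredient is the family of Strichartz estimates for $e^{-it(-\Delta)^\alpha}$; everything else is the $h\to0$ shadow of what has already been done in Sections~3--4.

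First I would record the continuum Strichartz estimates. For $\alpha=1$ these are the classical estimates of Strichartz, Ginibre--Velo and Keel--Tao \cite{KT}. For $d=1$, $0<\alpha<1$, $\alpha\neq\frac12$, they follow exactly as in Section~3 with $h=0$: the phase $x\xi-t|\xi|^{2\alpha}$ has non-vanishing second derivative away from the origin when $\alpha\neq\frac12$, so van der Corput's lemma yields the frequency-localized dispersive bound $\|e^{-it(-\Delta)^\alpha}P_Nf\|_{L^\infty(\mathbb{R})}\lesssim N^{1-\alpha}|t|^{-1/2}\|f\|_{L^1(\mathbb{R})}$, and the $TT^*$ method together with a Littlewood--Paley square-function argument (and the Christ--Kiselev lemma for the inhomogeneous and dual-exponent versions) then gives
$$\|e^{-it(-\Delta)^\alpha}f\|_{L_t^q(\mathbb{R};L_x^r)}\lesssim \| |\nabla|^{s_q}f\|_{L^2},\qquad s_q:=\tfrac{2(1-\alpha)}{q},$$
for every admissible pair $(q,r)$ in the sense of \eqref{admissible}; the value $s_q$ is forced by scaling and vanishes when $\alpha=1$.

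Next I would set up the contraction, mirroring the proof of Proposition~\ref{L^infty bound}. On a short interval $I=[-\tau,\tau]$ I solve the Duhamel equation $u=e^{-it(-\Delta)^\alpha}u_0-i\lambda\int_0^t e^{-i(t-s)(-\Delta)^\alpha}(|u|^{p-1}u)(s)\,ds$ by Banach's fixed point theorem in a ball of $C_t(I;H^\alpha)\cap L_t^{q_*}(I;L_x^\infty)$, where $q_*$ is the exponent \eqref{q*} read over $\mathbb{R}^d$. Two ingredients are used: the $L_t^{q_*}L_x^\infty$ control of the free flow by $\|u_0\|_{H^\alpha}$, which comes from combining the Strichartz estimate above with a Sobolev embedding into $L^\infty$ (the continuum analog of Corollary~\ref{linear L^infty bound}); and the continuum counterpart of Lemma~\ref{Lem:Nonlinear estimate}, namely $\||\nabla|^s(|u|^{p-1}u)\|_{L^2}\lesssim\|u\|_{L^\infty}^{p-1}\||\nabla|^su\|_{L^2}$ for $0\le s\le1$ and $p>1$, proved by the same finite-difference characterization of $\dot H^s$ as in the proof of Lemma~\ref{Lem:Nonlinear estimate}. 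Together with H\"older in time and unitarity of $e^{-it(-\Delta)^\alpha}$ on $H^\alpha$, these give a gain of $\tau^{1-\frac{p-1}{q_*}}$ in the nonlinear term, and since \eqref{assumption 1}--\eqref{assumption 2} are exactly the conditions making $1-\frac{p-1}{q_*}>0$ in \eqref{q*}, the Duhamel map is contractive for $\tau=\tau(\|u_0\|_{H^\alpha})$ small; persistence of $H^\alpha$-regularity and uniqueness in $C_t(I;H^\alpha)$ are standard.

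Finally I would globalize using conservation of mass $M(u)=\|u\|_{L^2}^2$ and energy $E(u)=\frac12\|(-\Delta)^{\alpha/2}u\|_{L^2}^2+\frac{\lambda}{p+1}\|u\|_{L^{p+1}}^{p+1}$. When $\lambda>0$ the energy directly controls $\|(-\Delta)^{\alpha/2}u(t)\|_{L^2}$; when $\lambda<0$ one bounds $\|u\|_{L^{p+1}}^{p+1}$ by the Gagliardo--Nirenberg inequality and absorbs it using $\frac{d(p-1)}{2\alpha}<2$ (guaranteed by \eqref{assumption 1}--\eqref{assumption 2}) via Young's inequality, verbatim as in \eqref{conservation laws and GN applications}. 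Hence $\sup_{t}\|u(t)\|_{H^\alpha}\le C(\|u_0\|_{H^\alpha})$, the local solution extends to all of $\mathbb{R}$, the local existence time can be taken uniform, and iterating the local Strichartz bound on the $\sim\langle T\rangle$ subintervals of $[-T,T]$ and summing the $q$-th powers gives $\|u\|_{L_t^q([-T,T];W^{\alpha-\frac{2(1-\alpha)}{q},r})}\lesssim\langle T\rangle^{1/q}$ for every admissible $(q,r)$. I do not expect a genuine obstacle here: the argument is entirely standard and is the $h=0$ specialization of Sections~3--4; the only point requiring care is matching the fractional Strichartz estimates (the loss $\frac{2(1-\alpha)}{q}$ and the full admissible range) to the statement of \eqref{NLS a priori bound}, which is the bound the proof of Theorem~\ref{main theorem} actually consumes.
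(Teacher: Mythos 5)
Your proposal is correct and follows essentially the same route as the paper's proof: contraction mapping based on the continuum Strichartz estimates with loss $\frac{2(1-\alpha)}{q}$, globalization via the mass/energy conservation laws and the Gagliardo--Nirenberg absorption under $\frac{d(p-1)}{2\alpha}<2$, and iteration of the local bound over $\sim\langle T\rangle$ subintervals to get \eqref{NLS a priori bound}. The only difference is cosmetic: the paper simply cites \cite{LP}, \cite{HS} for the local theory and \cite{KT}, \cite{COX} for the Strichartz estimates, whereas you sketch their derivation (van der Corput plus $TT^*$) as the $h=0$ shadow of Section~3.
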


\begin{proof}[Sketch of Proof]
One can show local well-posedness by the standard contraction mapping argument. For details, we refer to \cite[Theorem~6.2]{LP} in the case $\alpha=1$, and to \cite[Theorem1.1]{HS} in the case $0<\alpha<1$ and $\alpha\neq\frac{1}{2}$. Indeed, the proof relies on Strichartz estimates 
$$\| e^{-it(-\Delta)^\alpha} f \|_{L_t^q(\mathbb{R};L_x^r(\R^d))} \ls \| |\nabla|^{\frac{2(1-\alpha)}{q}} f \|_{L^2(\R^d)}$$
for admissible $(q,r)$ (see \cite{KT} for $\alpha=1$, and \cite{COX} for $0<\alpha<1$ with $\alpha\neq\frac{1}{2}$). By local well-posedness, the solution $u(t)$ satisfies
$$\|u(t)\|_{L_t^q([-\tau,\tau]; W^{\alpha-\frac{2(1-\alpha)}{q}, r})}\lesssim \|u_0\|_{H^\alpha}$$
on a sufficiently short time interval $[-\tau,\tau]$ depending only on $\|u_0\|_{H^\alpha}$. Moreover, it conserves the mass
$$M(u):=\int_{\mathbb{R}^d}|u(x)|^2\ dx$$
and the energy 
$$E(u):=\int_{\mathbb{R}^d}\frac{1}{2}\left||\nabla|^\alpha u(x)\right|^2+\frac{\lambda}{p+1}|u(x)|^{p+1}\ dx.$$

Because of the choice of $d$, $\alpha$, $p$ and $\lambda$, using the conservation laws and Gagliardo-Nirenberg inequality (as in \eqref{conservation laws and GN applications}), one can show that $\|u(t)\|_{H^\alpha}$ is bounded by a constant depending only on the mass $M(u_0)$ and the energy $E(u_0)$. Thus, the inteval of existence can be extended arbitrarily with the bound \eqref{NLS a priori bound}.
\end{proof}

As an application, we obtain the following desired time-averaged $L^\infty$ bound.
\begin{cor}[Time-averaged $L^\infty$ bound for NLS]\label{time-averaged bound for NLS}
Suppose that $u(t)$ be the global solution to NLS \eqref{NLS2} with initial data $u_0$, constructed in Proposition \ref{WP:NLS}. Then,
$$\|u(t)\|_{L_t^{q_*}([-T,T]; L^\infty)}\lesssim \langle T\rangle^{1/q_*}, $$
where $q_*$ is given by \eqref{q*}. 
\end{cor}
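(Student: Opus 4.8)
The plan is to mimic exactly the argument used for the discrete analogue in Corollary \ref{linear L^infty bound}, replacing the discrete Strichartz estimates and discrete Sobolev embedding by their continuum counterparts, which are available from Theorem \ref{WP:NLS} and the classical Sobolev inequality on $\mathbb{R}^d$. The key point is to choose an admissible pair $(q,r)$ for which the Strichartz-controlled Sobolev space $W^{\alpha-\frac{2(1-\alpha)}{q},r}$ embeds into $L^\infty(\mathbb{R}^d)$, and to check that this $q$ coincides with (or is at most) the exponent $q_*$ given in \eqref{q*}.

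First I would split into the three cases of \eqref{q*}. When $d=1$ and $\tfrac12<\alpha\le 1$ we have $\alpha>\tfrac{d}{2}$, so the Sobolev embedding $H^\alpha(\mathbb{R}^d)\hookrightarrow L^\infty(\mathbb{R}^d)$ holds; combined with unitarity of $e^{-it(-\Delta)^\alpha}$ on $H^\alpha$ and conservation of $\|u(t)\|_{H^\alpha}$ (which is part of Theorem \ref{WP:NLS}), this gives the bound with $q_*=\infty$ and the trivial factor $\langle T\rangle^{1/q_*}=1$. In the remaining cases $\alpha\le\tfrac{d}{2}$ one cannot embed $H^\alpha$ into $L^\infty$, so instead I would use the a priori Strichartz bound \eqref{NLS a priori bound}: pick the admissible pair $(q,r)=(q_*,r_*)$ where $q_*$ is as in \eqref{q*} and $r_*$ is determined by the admissibility relation $\tfrac{2}{q_*}+\tfrac{d}{r_*}=\tfrac{d}{2}$. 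One then checks that with this choice the regularity index $s_*:=\alpha-\tfrac{2(1-\alpha)}{q_*}$ satisfies $s_*>\tfrac{d}{r_*}$ — this is precisely the (sub)critical Sobolev condition $W^{s_*,r_*}(\mathbb{R}^d)\hookrightarrow L^\infty(\mathbb{R}^d)$ — and the strict inequality is exactly what forces the small loss $\delta>0$ in the definition of $q_*$ in \eqref{q*}. Applying this Sobolev embedding inside the time norm and then \eqref{NLS a priori bound} yields
$$\|u(t)\|_{L_t^{q_*}([-T,T];L^\infty)}\lesssim \|u(t)\|_{L_t^{q_*}([-T,T];W^{s_*,r_*})}\lesssim \langle T\rangle^{1/q_*},$$
as claimed.

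The only slightly delicate point — the ``main obstacle'' — is the bookkeeping in the last two cases: one must verify that the exponent $q_*$ of \eqref{q*} is an admissible time-exponent (i.e.\ $q_*\ge 2$, which is exactly the content of Remark \ref{high d restriction}, and $q_*<\infty$ so that a genuine time integration is involved) and that the associated $r_*$ lies in $[2,\infty]$, so that the pair $(q_*,r_*)$ is legitimately admissible in the sense of \eqref{admissible} and \eqref{NLS a priori bound} applies. Once the arithmetic relating $\alpha$, $d$, $q_*$, $r_*$ and the loss $\delta$ is organized — which is the same computation already carried out for the discrete case in the proof of Corollary \ref{linear L^infty bound} — the statement follows immediately; no new analytic input beyond Theorem \ref{WP:NLS} and the classical Sobolev inequality is needed.
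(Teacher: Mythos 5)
Your proposal is correct and follows essentially the same route as the paper: the paper's proof of Corollary \ref{time-averaged bound for NLS} simply combines the a priori bound \eqref{NLS a priori bound} with the Sobolev/Gagliardo--Nirenberg embedding, using the very same Lebesgue exponents as in the proof of Corollary \ref{linear L^infty bound}, which is exactly the case-by-case bookkeeping you carry out. The only minor imprecision is that in the case $d=1$, $\tfrac12<\alpha\le 1$ the relevant input is the global-in-time bound on $\|u(t)\|_{H^\alpha}$ obtained from the conservation laws (not literal conservation of the $H^\alpha$-norm or unitarity of the linear flow), but this does not affect the argument.
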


\begin{proof}
One can show the corollary combining \eqref{NLS a priori bound} and the Gagliardo-Nirenbergy inequality. Indeed, in consideration of the formal continuum limit $h\to 0$, one may use the same Lebesgue expoents as in the proof of Corollary \ref{linear L^infty bound}.
\end{proof}

\end{document}